 \theoremstyle{plain}
 \newtheorem{thm}{Theorem}[section]
 \newtheorem{cor}[thm]{Corollary}
 \newtheorem{lem}[thm]{Lemma}
 \newtheorem{prop}[thm]{Proposition}
 \theoremstyle{definition}
 \newtheorem{defn}[thm]{Definition}
 \newtheorem{notation}[thm]{Notation}
 \newtheorem{ex}[thm]{Example}
 \theoremstyle{remark}
 \newtheorem{rmk}[thm]{Remark}
 \DeclareMathOperator{\id}{id}
 \DeclareMathOperator{\EFT}{{\rm -}{\sf EFT}}
 \DeclareMathOperator{\EB}{{\rm -}{\sf EB}}
  \DeclareMathOperator{\Hess}{Hess}
\DeclareMathOperator{\SM}{\underline{\sf SM}}
\DeclareMathOperator{\Pf}{{\rm Pf}}
\DeclareMathOperator{\Euc}{\underline{\sf Euc}}
\DeclareMathOperator{\euc}{\underline{\mathfrak euc}}
\DeclareMathOperator{\Diff}{\underline{\rm Diff}}
\DeclareMathOperator{\pt}{\rm pt}
\DeclareMathOperator{\ev}{\rm ev}
\DeclareMathOperator{\cl}{\rm cl}
\DeclareMathOperator{\odd}{\rm odd}
\DeclareMathOperator{\Sym}{\rm Sym}
\DeclareMathOperator{\Vect}{\underline{\rm \Vect}}
\newcommand{\sq}{\mathord{/\!/}}
\def\beq{\begin{eqnarray}}
\def\eeq{\end{eqnarray}}
\def\downin{\ensuremath{\rotatebox[origin=c]{90}{$\in$}}}
\DeclareMathOperator{\dR}{\rm dR} 
 \newcommand{\bp}{\begin{proof}[\ensuremath{\mathbf{Proof}}]}
 \newcommand{\bs}{\begin{proof}[\ensuremath{\mathbf{Solution}}]}
 \newcommand{\ep}{\end{proof}}
 \newcommand{\be}{\begin{enumerate}}
 \newcommand{\ee}{\end{enumerate}}
 \newcommand{\N}{\mathbb{N}}
 \newcommand{\R}{\mathbb{R}}
 \newcommand{\C}{\mathbb{C}}
 \newcommand{\Z}{\mathbb{Z}} 
 \newcommand{\A}{\mathcal{A}}
 \newcommand{\G}{\mathcal{G}}
 \newcommand{\op}{^{\sf{op}}}
\begin{document}

\title{The Chern-Gauss-Bonnet Theorem via Supersymmetric Euclidean Field Theories}

\author{Daniel Berwick-Evans}

\address{Department of Mathematics, Stanford University, Stanford, CA 94305}

\email{danbe@stanford.edu}

\date{\today}

\begin{abstract}
We prove the Chern-Gauss-Bonnet Theorem using sigma models whose source supermanifolds have super dimension~$0|2$. Along the way we develop machinery for understanding manifold invariants encoded by families of $0|\delta$-dimensional Euclidean field theories and their quantization. 
\end{abstract}

\maketitle 
\setcounter{tocdepth}{1}
\tableofcontents

\section{Introduction and Outline of Results}

In this paper we prove the Chern-Gauss-Bonnet theorem using ideas from supersymmetric sigma models. For a closed Riemannian  manifold~$X$, let $\SM(\R^{0|2},X)$ denote the supermanifold of maps from the odd plane, $\R^{0|2}$, into~$X$. We construct a function that associates to a map~$\phi\in \SM(\R^{0|2},X)$ a superspace analog of the energy,~$\mathcal{S}_0(\phi)=\int_{\R^{0|2}}\| T\phi\|^2$, where $T\phi$ denotes the differential of~$\phi$, the norm squared uses the metric on $X$, and the integral is a Berezinian integral.\footnote{Defining $\mathcal{S}_0(\phi)$ precisely requires that we work with $S$-families of maps, $\phi\colon S\times \R^{0|2}\to X$ for $S$ a base supermanifold, as will describe later.} Our first result relates this function to the Euler characteristic of~$X$. 

\begin{thm} The Berezinian integral of $\exp(-\mathcal{S}_0(\phi))$ over $\SM(\R^{0|2},X)$ computes the Euler characteristic of~$X$:
$$
(2\pi)^{-n/2}\int_{\SM(\R^{0|2},X)} \exp(-\mathcal{S}_0(\phi))=\chi(X).
$$
Moreover, the left hand side can be locally identified with an integral of the Pfaffian of the curvature of the Levi-Civita connection on~$X$. 
\label{thm1}
\end{thm}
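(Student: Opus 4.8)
The strategy is to reduce the Berezinian integral to a finite-dimensional super-integral, evaluate its bosonic part as a Gaussian, recognize the surviving fermionic integral as a Chern--Weil representative of the Euler class of $TX$, and then re-evaluate the same integral by a localization argument based on the super-Euclidean symmetry in order to identify it with $\chi(X)$; comparing the two evaluations yields both assertions. Concretely, writing $\theta_1,\theta_2$ for the odd coordinates on $\R^{0|2}$, an $S$-family $\phi\colon S\times\R^{0|2}\to X$ is encoded by its Taylor expansion $\phi=x+\theta_1\psi_1+\theta_2\psi_2+\theta_1\theta_2\,F$, and using the Levi-Civita connection to make the top coefficient tensorial identifies $\SM(\R^{0|2},X)$ with the total space of the super vector bundle $TX\oplus\Pi TX\oplus\Pi TX\to X$, a supermanifold of dimension $2n|2n$, with $F$ the even fibre coordinate and $\psi_1,\psi_2$ the two odd ones. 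The Berezinian integral then becomes an ordinary integral over the even fibre $\R^n\ni F$, an integral over $X$, and extraction of the top Grassmann monomial in $(\psi_1,\psi_2)$; making this precise, and checking compatibility with the Berezinian measure, is where the $S$-family formalism promised in the footnote is genuinely needed.

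Expanding $\|T\phi\|^2$ and performing the Berezinian over $\R^{0|2}$ --- most transparently in geodesic normal coordinates centered at $x$, where first metric derivatives vanish and second derivatives organize into the Riemann tensor --- writes $\mathcal{S}_0$ as a sum of a nondegenerate quadratic form in $F$, a term linear in $F$ and quadratic in the $\psi_i$, and a quartic curvature term in the $\psi_i$; keeping the connection correction in $F$ consistent throughout is the only delicate point in this otherwise routine bookkeeping. The $F$-dependence is purely Gaussian, so completing the square and integrating over $\R^n$ is elementary, and the crucial effect is that the resulting shift changes the quartic term so that its coefficient becomes the \emph{full} Riemann curvature tensor. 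The Gaussian normalization together with the prefactor $(2\pi)^{-n/2}$ then leaves exactly the integral over $X$ of the Euler form $e(TX,\nabla)$.

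What survives after these manipulations is the integral over $X$ of the top Grassmann component of the exponential of a quadratic form in $(\psi_1,\psi_2)$ whose coefficient is the $\mathfrak{so}(n)$-valued curvature two-form, and this fermionic Gaussian is by definition its Pfaffian; hence the left-hand side equals the integral over $X$ of the Pfaffian of the curvature, computed in a local orthonormal frame --- this is the ``moreover'' clause, the word \emph{locally} simply recording the use of such a frame. (In particular both sides vanish when $n$ is odd, consistently with $\chi(X)=0$ in that case.)

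It remains to identify this with $\chi(X)$ on the nose, and this is the step I expect to be \emph{the real obstacle}. The generators of the $\R^{0|2}$-translations act as commuting square-zero odd vector fields preserving the relevant data, so adding to $\mathcal{S}_0$ a term built from a generic vector field $V$ on $X$, scaled by a parameter $t$, alters the integral only by a Berezinian total derivative --- hence not at all; letting $t\to\infty$, stationary phase concentrates the integral at the zeros of $V$, each contributing its local index, and Poincar\'e--Hopf sums these to $\chi(X)$. The substantive work is in making this rigorous: justifying the deformation invariance, interchanging the bosonic and fermionic integrations, carrying out the stationary-phase localization on a supermanifold noncompact in its even directions, and tracking every normalization so that the answer is precisely $\chi(X)$ rather than a universal multiple of it. (One could instead simply invoke that $e(TX)$ paired against the fundamental class of $X$ is the Euler number, but that would be less in the spirit of the paper.)
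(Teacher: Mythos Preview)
Your proposal is correct and follows essentially the same route as the paper: the identification $\SM(\R^{0|2},X)\cong p^*\pi(TX\oplus TX)$ via the connection, the explicit form $\mathcal{S}_0=\tfrac{1}{2}\|F\|^2+\tfrac{1}{2}R(\phi_1,\phi_2,\phi_1,\phi_2)$, the Gaussian integration in $F$, and the fermionic Gaussian yielding $\Pf(R)$ are exactly the paper's Lemmas~\ref{geo} and~\ref{action} and the proof in Section~\ref{sec:int}; your localization step (deform by a vector field, argue invariance via supersymmetry, apply Poincar\'e--Hopf) is the content of Theorem~\ref{thm2} together with Corollary~\ref{thm3}, where the paper takes $V=\nabla h$ for $h$ Morse and packages the invariance argument as a concordance statement (Proposition~\ref{concord}) rather than as a direct Berezinian total derivative. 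One small inaccuracy: once you have passed to the connection-corrected variable $F$ (or, equivalently, to normal coordinates at the point in question), there is \emph{no} term linear in $F$ and quadratic in the $\psi_i$---the action is already decoupled as $\tfrac{1}{2}\|F\|^2+\tfrac{1}{2}R(\phi_1,\phi_2,\phi_1,\phi_2)$, so no completing-the-square is needed and the curvature appears directly.
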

The above employs a canonical trivialization of the Berezinian line of $\SM(\R^{0|2},X)$ (i.e., there is a canonical volume form on this super space) so the function $\exp(-\mathcal{S}_0(\phi))$ can be integrated over $\SM(\R^{0|2},X)$. The next construction considers a modification of the above, where we take $\mathcal{S}_h(\phi)=\int_{\R^{0|2}} (\|T\phi\|^2-\phi^*h)$ for $h\in C^\infty(X)$ a smooth function. Define the \emph{partition function} as
$$
Z_X(g,h):=(2\pi)^{-n/2} \int_{\SM(\R^{0|2},X)} \exp(-\mathcal{S}_h(\phi)),
$$
where $g$ is the metric on $X$. 

\begin{thm} Let $h\in C^\infty(X)$ be a Morse function and $\lambda\in \R_{>0}$ be a parameter. Then 
$$
 \lim_{\lambda\to\infty} Z_X(g,\lambda h)={\rm Index}(\nabla h)
$$
where the right hand side is the Hopf index of the gradient vector field $\nabla h$. \label{thm2} \end{thm}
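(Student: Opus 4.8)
The plan is to prove the statement by a Laplace-type localization of the Berezinian integral defining $Z_X(g,\lambda h)$: as $\lambda\to\infty$ the integral concentrates on the zero set of $\nabla h$, which by the Morse hypothesis is a finite set of nondegenerate points, and the contribution of each such point will turn out to be exactly the sign of the determinant of the Hessian of $h$ there; summing these signs is, by definition, the Hopf index of $\nabla h$.

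First I would reduce $Z_X(g,\lambda h)$ to a finite-dimensional integral over $X$. Decomposing a family of maps $\phi\colon S\times\R^{0|2}\to X$ into component fields $(x,\psi_1,\psi_2,F)$ --- with $x\colon S\to X$ bosonic, $\psi_1,\psi_2$ odd sections of $x^*TX$, and $F$ an even auxiliary section of $x^*TX$ --- the superspace energy $\int_{\R^{0|2}}\|T\phi\|^2$ contributes the four-fermion curvature term already computed in the proof of Theorem~\ref{thm1}, while $-\int_{\R^{0|2}}\phi^*(\lambda h)$ contributes a term linear in $F$ with coefficient $\lambda\,dh(x)$ together with the bilinear $\lambda\,\Hess h(x)(\psi_1,\psi_2)$. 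Using the canonical trivialization of the Berezinian line and integrating out the auxiliary field $F$ (completing the square in $F$), one obtains the ``localizing'' weight $\exp(-c\,\lambda^2\|\nabla h(x)\|^2)$ with $c>0$, multiplied by $\exp\bigl(-\tfrac14\langle\psi,R(x)\psi\rangle+\lambda\,\Hess h(x)(\psi_1,\psi_2)\bigr)$. Equivalently, $Z_X(g,\lambda h)$ is the integral over $X$ of the Mathai--Quillen representative of the Euler class of $TX$ associated to the Levi--Civita connection and the section $\lambda\nabla h$, of which Theorem~\ref{thm1} is the case $\lambda=0$.

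The localization is the heart of the argument. On the complement of any fixed neighborhood $U$ of $\{\nabla h=0\}$ one has $\|\nabla h\|\ge\varepsilon>0$, so the weight is at most $e^{-c\varepsilon^2\lambda^2}$, while the rest of the integrand --- the Berezin integral of the fermionic factor against the Riemannian volume --- is, on the compact manifold $X$, a polynomial in $\lambda$ of degree $n=\dim X$ with bounded coefficients; hence $\int_{X\setminus U}\to0$. Near a critical point $p$ I would pass to Morse coordinates (or normal coordinates and invoke the Morse Lemma), rescale the coordinate as $x=p+\lambda^{-1}u$, and apply Laplace's method in the bosonic variable: the volume element contributes a factor $\lambda^{-n}$, the weight tends to $\exp(-c\,\|\Hess_p h\cdot u\|^2)$, and $R(x)\to R(p)$, so the fermionic Berezin integral tends to $\int d\psi_1\,d\psi_2\,\exp\bigl(\lambda\,\Hess_p h(\psi_1,\psi_2)-\tfrac14\langle\psi,R(p)\psi\rangle\bigr)=\lambda^n\det(\Hess_p h)+O(\lambda^{n-2})$, since only the pure-Hessian term saturates the top fermionic degree. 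The powers of $\lambda$ cancel, the remaining $u$-Gaussian integrates --- against the $(2\pi)^{-n/2}$ normalization --- to $\det(\Hess_p h)/\lvert\det(\Hess_p h)\rvert=\mathrm{sign}\,\det(\Hess_p h)$, and the $O(\lambda^{n-2})$ curvature corrections die in the limit. Summing over the finitely many critical points and recalling that the Hopf index of $\nabla h=g^{-1}dh$ at a nondegenerate zero equals $\mathrm{sign}\,\det(\Hess_p h)$ (the positive factor $\det(g^{-1})$ not affecting the sign) gives $\lim_{\lambda\to\infty}Z_X(g,\lambda h)={\rm Index}(\nabla h)$.

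The main obstacle I expect is the analytic bookkeeping in the localization: justifying the interchange of the limit with the $X$- and Berezinian integrals by uniform Gaussian-decay estimates that dominate the polynomial-in-$\lambda$ growth of the pre-exponential Berezinian factor, controlling the Morse-Lemma change of variables and showing its $O(\lambda^{-1})$ errors are negligible, and --- the most error-prone point --- tracking the normalization constants so that the rescaled Gaussian integral against $(2\pi)^{-n/2}$ yields exactly $\pm1$ at each critical point rather than $\pm1$ times a spurious power of $2$ or $\pi$. Combined with the identification $Z_X(g,0)=\chi(X)$ of Theorem~\ref{thm1} (and the independence of $Z_X(g,\lambda h)$ of $\lambda$), the present argument also reproves the Poincar\'e--Hopf theorem; but the localization computation is the one that makes the supersymmetric machinery do the work.
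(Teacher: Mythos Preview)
Your proposal is correct and follows the same overall architecture as the paper: integrate out the auxiliary field $F$ to produce the localizing Gaussian $\exp(-\tfrac{1}{2}\lambda^2\|\nabla h\|^2)$, split $X$ into small neighborhoods of the critical points and their complement, kill the complement by exponential decay against polynomial-in-$\lambda$ growth, and evaluate each local contribution as ${\rm sgn}\det\Hess_p h$.

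The one genuine methodological difference is how you dispose of the curvature term near a critical point. You rescale $x=p+\lambda^{-1}u$ and argue directly that the curvature contribution to the Berezin integral is $O(\lambda^{n-2})$ relative to the dominant $\lambda^n\det\Hess_p h$, so it drops out in the limit; this is the classical Laplace--method route and is entirely self-contained. The paper instead invokes Corollary~\ref{thm3} (independence of $Z_X$ on $g$) to replace $g$ by a metric that is flat on each $U_p$, so $R\equiv 0$ there and the local integral is exactly a product of a bosonic and a fermionic Gaussian with no error terms to track. Your approach avoids relying on the concordance machinery and would stand on its own even without Corollary~\ref{thm3}; the paper's approach is cleaner bookkeeping-wise (no subleading curvature corrections, no rescaling Jacobians) and illustrates how the field-theoretic invariance simplifies the analysis. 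Either way the endpoint is the same.
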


The above results identify the two sides of the Chern-Gauss-Bonnet Theorem. In order to equate them we will understand these integrals as coming from a larger structure, namely a quantization procedure for $0|2$-dimensional Euclidean field theories; making this precise encompasses a large part of our work, and sketching the approach is the goal of Sections~\ref{sec:pFTdef} through~\ref{sec:partfunconc} below. The punchline is that the general structure of quantization for $0|\delta$-dimensional Euclidean field theories forces the following.  

\begin{cor} The number $Z_X(g,h)$ is independent of the metric $g$ and the function $h$. \label{thm3} \end{cor}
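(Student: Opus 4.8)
The plan is to show that $Z_X(g,h)$ depends on neither input by interpolating between any two choices and using the two preceding theorems as the anchor points. First I would observe that the space of pairs $(g,h)$ — a Riemannian metric together with a smooth function — is contractible (indeed convex, once we fix a background metric, since a convex combination of metrics is a metric and smooth functions form a vector space). So it suffices to produce, for a smooth one-parameter family $(g_t,h_t)$, $t\in[0,1]$, a proof that $Z_X(g_t,h_t)$ is independent of $t$. The natural strategy is to recognize $Z_X(g,h)$ as the partition function of a $0|2$-dimensional Euclidean field theory built from the sigma model data, and then to invoke the deformation invariance of partition functions under deformations of the Euclidean field theory — this is exactly the ``larger structure'' advertised in the paragraph before the corollary. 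Concretely, a smooth family $(g_t,h_t)$ of metrics and Morse-or-arbitrary functions yields a smooth family of actions $\mathcal{S}_{h_t}$ computed with respect to $g_t$, hence a smooth family of integrands $\exp(-\mathcal{S}_{h_t}(\phi))$ on $\SM(\R^{0|2},X)$ (with its canonical Berezinian), and one wants $\frac{d}{dt}Z_X(g_t,h_t)=0$.

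The key steps, in order, are: (1) set up the family version, using the footnoted $S$-family formalism so that differentiating in $t$ is legitimate on the supermanifold of maps; (2) compute $\frac{d}{dt}\exp(-\mathcal{S}_{h_t}(\phi))$ and show the $t$-derivative of the integrand is a total Berezinian derivative — i.e. it is $Q$-exact for the supersymmetry/de Rham-type differential $Q$ acting on $\SM(\R^{0|2},X)$, equivalently it lies in the image of the divergence operator associated to the canonical volume form; (3) conclude that its Berezinian integral over the closed manifold's mapping space vanishes by the super-analog of Stokes' theorem, so $Z_X$ is locally constant in $t$, hence constant. Alternatively, and perhaps more cleanly given the machinery the paper will develop, one appeals directly to the general result that the quantization of a $0|\delta$-dimensional Euclidean field theory produces a partition function that is invariant under the natural notion of concordance/deformation of the input geometric data; then $(g_t,h_t)$ is precisely such a deformation and the conclusion is immediate. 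Either route reduces the corollary to a statement already subsumed in Sections~\ref{sec:pFTdef}--\ref{sec:partfunconc}.

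The main obstacle I expect is step (2): verifying that the $t$-derivative of $\exp(-\mathcal{S}_{h_t})$ is genuinely exact for the relevant differential, rather than merely formally so. This requires a careful identification of $\SM(\R^{0|2},X)$ — which unwinds to (a shift of) the total space of $TX$, or rather to differential forms on $X$ — together with the precise form of $Q$ and the canonical volume form, and then a computation showing that the metric-variation term $\dot g$ and the potential-variation term $\dot h$ both contribute $Q$-exact pieces. The variation in $h$ should be the easier half, essentially because $\int_{\R^{0|2}}\phi^*(\dot h)$ is manifestly a Berezinian-exact expression once $h$ is pulled back; the metric variation is subtler because $\|T\phi\|^2$ depends on $g$ in a way entangled with the Levi-Civita connection implicit in the canonical trivialization of the Berezinian line, so one must check that the variation of the volume form and the variation of the action combine correctly. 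Given Theorems~\ref{thm1} and~\ref{thm2}, one also has a consistency check: the limiting value as $\lambda\to\infty$ is $\mathrm{Index}(\nabla h)=\chi(X)$, so whatever constant $Z_X(g,h)$ equals, it must be $\chi(X)$ — which is the Chern-Gauss-Bonnet theorem.
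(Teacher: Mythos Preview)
Your alternative route---appealing to the general principle that quantization of $0|\delta$-EFTs preserves concordance of the input geometric data, combined with the contractibility of the space of pairs $(g,h)$---is exactly the paper's proof. The paper packages this as: quantization is a morphism of presheaves (Lemma~\ref{lem:conc}), morphisms of presheaves preserve concordance classes, the space $\mathcal{G}_X(\pt)$ of metrics and functions has a single concordance class (Proposition~\ref{prop:indep}, proved by the same convexity observation you make), and concordant elements of $0|\delta\EFT_{\rm pol}^\bullet(\pt)\cong\R$ are equal (Proposition~\ref{cor:concpoint}). So that half of your proposal matches the paper.

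Your primary route---differentiating in $t$ and showing the derivative of the integrand is $Q$-exact, then applying a super-Stokes theorem---is a legitimate alternative, and in fact is morally what the paper's concordance machinery is encoding. The proof of Proposition~\ref{concord} is precisely a Stokes-type argument: it constructs a homotopy operator $Q$ on field theories over $X\times\R$ satisfying $\Delta Q = i_+^* - i_-^*$, using the Cartan-type identity $[d_\delta,\dots,[d_1,I_{\partial_\lambda}]\dots]=\mathcal{L}_{\partial_\lambda}$. So your ``direct'' approach and the paper's ``abstract'' approach converge at the same computation. The advantage of the paper's packaging is that one need not separately verify $Q$-exactness for the metric variation and the potential variation---both are absorbed into the single statement that the quantization map is natural in the base of the family, and the Stokes step happens once, over the point, where it is trivial. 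Your worry about the metric variation being ``subtler'' because of the connection's role in trivializing the Berezinian is thereby sidestepped: the trivialization of ${\rm Ber}(\SM(\R^{0|2},X))$ is in fact connection-independent (see the remark following Proposition~\ref{intprop}), and naturality of the pushforward handles the rest.
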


\begin{rmk} As we will explain in Section \ref{subsec:quant}, the partition function can be viewed as the total volume of the \emph{smooth stack} of fields equipped with a Weinstein volume form determined by the exponentiated classical action. Our results can be rephrased as showing the total volume is independent of the choice of metric $g$ and smooth function $h$, and is equal to the Euler characteristic of $X$. \end{rmk}

From the above three results we deduce the following form of the Chern-Gauss-Bonnet formula. 

\begin{cor}[Chern-Gauss-Bonnet]\label{cor:CGB} Let $R$ denote the Riemann curvature tensor associated to the Levi-Civita connection on a closed Riemannian manifold~$X$, and let ${\rm Pf}(R)$ denote the Pfaffian density of the curvature. Then
$$
(2\pi)^{-n/2}\int_X{\rm Pf}(R)={\rm Index}(\nabla h)
$$
where $h$ is any Morse function on $X$.
\end{cor}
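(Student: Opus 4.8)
The plan is to chain together the three preceding results, which have already done all the substantive work. The content of Corollary~\ref{cor:CGB} is purely formal once Theorems~\ref{thm1} and~\ref{thm2} and Corollary~\ref{thm3} are in hand, so the ``proof'' is really an assembly argument.

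\begin{proof}[Proof of Corollary~\ref{cor:CGB}]
Fix a Morse function $h\in C^\infty(X)$; such a function exists on any closed manifold. By Corollary~\ref{thm3}, the partition function $Z_X(g,h')$ is independent of both the metric $g$ and the smooth function $h'\in C^\infty(X)$. In particular, for the given metric $g$ and any $\lambda\in\R_{>0}$ we have
$$
Z_X(g,0)=Z_X(g,\lambda h).
$$
On one hand, $Z_X(g,0)$ is by definition $(2\pi)^{-n/2}\int_{\SM(\R^{0|2},X)}\exp(-\mathcal{S}_0(\phi))$, which by Theorem~\ref{thm1} equals $\chi(X)$ and is locally the integral of the Pfaffian density of the Levi-Civita curvature; globalizing this local identification (using that both sides are computed by the same Berezinian integrand under the canonical trivialization of the Berezinian line) gives
$$
Z_X(g,0)=(2\pi)^{-n/2}\int_X {\rm Pf}(R).
$$
On the other hand, since $\lambda h$ is Morse for every $\lambda>0$ and $Z_X(g,\lambda h)$ does not depend on $\lambda$, we may pass to the limit $\lambda\to\infty$ and apply Theorem~\ref{thm2} to obtain
$$
Z_X(g,0)=\lim_{\lambda\to\infty}Z_X(g,\lambda h)={\rm Index}(\nabla h).
$$
Combining the two displayed computations of $Z_X(g,0)$ yields $(2\pi)^{-n/2}\int_X {\rm Pf}(R)={\rm Index}(\nabla h)$, as claimed. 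Since the left-hand side manifestly does not involve $h$, the index is the same for every Morse function, which is consistent with its classical identification with $\chi(X)$.
\end{proof}

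The only point requiring a little care — and hence the main obstacle, such as it is — is the passage from the \emph{local} identification of the left-hand side of Theorem~\ref{thm1} with an integral of ${\rm Pf}(R)$ to the \emph{global} statement that the Berezinian integral over $\SM(\R^{0|2},X)$ equals $(2\pi)^{-n/2}\int_X{\rm Pf}(R)$. This hinges on the canonical trivialization of the Berezinian line of $\SM(\R^{0|2},X)$ being compatible with the Riemannian volume density on $X$ under the projection $\SM(\R^{0|2},X)\to X$, so that the fiberwise Berezinian integral produces exactly the Pfaffian density; I would invoke the explicit local description of $\SM(\R^{0|2},X)$ (with fiber coordinates the odd tangent directions, twice) already set up in the proof of Theorem~\ref{thm1}, and note that the two local expressions patch because they agree with a single globally-defined density on $X$. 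Everything else is immediate from the cited results.
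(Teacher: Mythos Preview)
Your proof is correct and follows essentially the same route as the paper: chain $Z_X(g,0)=(2\pi)^{-n/2}\int_X\Pf(R)$ from Theorem~\ref{thm1}, use Corollary~\ref{thm3} to get $Z_X(g,0)=Z_X(g,\lambda h)$ for all $\lambda$, and then invoke Theorem~\ref{thm2} in the limit $\lambda\to\infty$. The one caveat you flag---passing from the ``local'' Pfaffian identification in the statement of Theorem~\ref{thm1} to the global equality---is not really an obstacle: the paper's proof of Theorem~\ref{thm1} (Section~\ref{sec:int}) computes the Berezinian integral globally and obtains $\int_X\Pf(R)$ directly, so you may simply cite that computation rather than argue separately about patching.
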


Ours is not the first proof of the Chern-Gauss-Bonnet Theorem using techniques from quantum field theory. The first physical proofs are due to Alvarez-Gaume~\cite{Alvarez} and Witten~\cite{susymorse}. This inspired mathematical arguments by Getzler using heat kernels (e.g., see~\cite{BGV} or \cite{roe}) and also by Lott~\cite{lott_susy}. There were also more algebraic approaches such as the Mathai-Quillen formalism~\cite{mathai-quillen}. Morally, all of these proofs compute an integral over free loop space---as an infinite-dimensional manifold---using various combinations of analysis and physical reasoning. This is motivated by the path integral in $1|2$-dimensional (alias, $N=2$ supersymmetric) quantum mechanics which defines a certain $1|2$-dimensional Euclidean field theory. The approach in this paper is to study a closely related $0|2$-dimensional Euclidean field theory, which keeps all spaces of fields finite-dimensional. Consequently the functional integral that defines quantization is just an ordinary (Berezinian) integral. In this sense, our proof identifies a particular bridge between Chern's original argument~\cite{CGB}---which manifestly takes place in finite-dimensional geometry---and supersymmetric field theory arguments.

This paper also serves as an investigation into the simplest kind of supersymmetric Euclidean field theories and their quantization, following the work of Hohnhold, Kreck, Stolz and Teichner \cite{HKST}. When compared to their higher-dimensional (and higher categorical) cousins these $0|\delta$-dimensional examples appear quite trivial. However, what they lack in richness they make up for in computability, and concrete calculations in supergeometry allow us to examine particular salient features. For example, Proposition~\ref{concord} shows a way in which supersymmetry is essential if one wishes to obtain interesting topological invariants from $0|\delta$-dimensional Euclidean field theory over manifolds, and quantization of families of $0|2$-dimensional field theories (see Theorem~\ref{02push}) requires that we restrict attention to \emph{renormalizable} families (see Definitions~\ref{rmk:RG}-\ref{def:renorm}). It remains mysterious how or if these features generalize in higher dimensions. 

\subsection{The Chern-Gauss-Bonnet Theorem as localization}\label{CGB}

In this subsection we outline the proofs of Theorems \ref{thm1} and \ref{thm2}. We begin with a geometric characterization of the relevant mapping space.
\begin{lem} \label{geo}
Given a connection on $X$, there exists an isomorphism of supermanifolds
$$\underline{\sf SM}(\R^{0|2},X)\cong p^*(\pi (T X\oplus TX))$$
where $p\colon  TX\to X$ is the usual projection. Hence, after a choice of connection, a point in $\underline{\sf SM}(\R^{0|2},X)$ is a point of $X$, two odd tangent vectors, and one even tangent vector; we denote this quadruple as $(x,\phi_1,\phi_2,F)$. 
\end{lem}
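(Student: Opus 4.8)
The plan is to unpack the functor-of-points description of $\SM(\R^{0|2},X)$ and match it with the description of $p^*(\pi(TX\oplus TX))$. Recall that for a test supermanifold $S$, an $S$-point of $\SM(\R^{0|2},X)$ is a map $\phi\colon S\times\R^{0|2}\to X$. Writing $\theta_1,\theta_2$ for the odd coordinates on $\R^{0|2}$, such a map is determined by its effect on functions: the pullback $\phi^*$ sends $f\in C^\infty(X)$ to an element of $C^\infty(S)[\theta_1,\theta_2]=C^\infty(S)\otimes\Lambda[\theta_1,\theta_2]$. Expanding in the $\theta_i$, $\phi^*f = \phi_0^*f + \theta_1(\cdots) + \theta_2(\cdots) + \theta_1\theta_2(\cdots)$, so the data is: an ordinary map $\phi_0\colon S\to X$ (the $\theta$-independent part), two odd derivations along $\phi_0$, and one even second-order piece. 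The first step is to make this precise and coordinate-independent by noting that an $S$-point of $\SM(\R^{0|2},X)$ is the same as an algebra map $C^\infty(X)\to C^\infty(S)\otimes\Lambda[\theta_1,\theta_2]$, and to analyze what the multiplicativity constraint imposes on the component maps.

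The key step is the following: differentiating the multiplicativity relation $\phi^*(fg)=\phi^*(f)\phi^*(g)$ in $\theta_1$ and in $\theta_2$ at $\theta=0$ shows that the linear-in-$\theta_i$ parts are derivations over $\phi_0^*$, i.e. $\phi_0$-related vector fields, hence sections $\phi_i\in\Gamma(S,\phi_0^*TX)$ for $i=1,2$ — but odd ones, which is exactly a point of $\pi(TX)$ over $\phi_0$. The coefficient of $\theta_1\theta_2$ is \emph{not} a derivation on its own: multiplicativity forces it to be a derivation \emph{up to} a symmetric bilinear correction term built from $\phi_1$ and $\phi_2$ (a "second-order" failure of the Leibniz rule, measuring the Hessian). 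This is where the choice of connection $\nabla$ enters: using $\nabla$ one can split off the symmetric correction and identify the $\theta_1\theta_2$-coefficient with a genuine section $F\in\Gamma(S,\phi_0^*TX)$ (an even tangent vector), the splitting being precisely the affine structure on the space of such coefficients modeled on $\Gamma(\phi_0^*TX)$. Assembling, an $S$-point of $\SM(\R^{0|2},X)$ is a quadruple $(\phi_0,\phi_1,\phi_2,F)$ with $\phi_0\colon S\to X$, $\phi_1,\phi_2$ odd sections of $\phi_0^*TX$, and $F$ an even section of $\phi_0^*TX$ — which is exactly the functor of points of $p^*(\pi(TX\oplus TX))$ over $X$. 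Naturality in $S$ of the whole correspondence gives the isomorphism of supermanifolds; one checks it intertwines the evident projections to $X$.

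I expect the main obstacle to be the treatment of the $\theta_1\theta_2$-component and the precise role of the connection. One must verify that the second-order Leibniz defect is genuinely symmetric and tensorial in $\phi_1,\phi_2$ (so that subtracting the connection-dependent correction $\tfrac12\nabla$-type term yields a well-defined, i.e. $C^\infty(X)$-linear, vector field $F$), and that a different choice of connection changes $F$ by a term quadratic in $\phi_1,\phi_2$ — making the isomorphism connection-dependent exactly as stated. A clean way to organize this is to first prove the statement locally in a coordinate chart on $X$ (where $\R^{0|2}$-maps are just tuples of $\Lambda[\theta_1,\theta_2]$-valued functions and everything is explicit), then observe that the transition functions on overlaps are controlled by the chain rule, whose failure to be linear is governed precisely by a Hessian/Christoffel term; patching with a global connection then produces the global splitting. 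Alternatively one can argue invariantly throughout using the Weil-algebra / $\Lambda$-point philosophy, in which case the content is that $C^\infty(X)\otimes\Lambda[\theta_1,\theta_2]$ with its differential structure is the function algebra of $\pi(TX\oplus TX)$ twisted by the second-order data, and the connection trivializes the twist. Either way the proof is a finite, local computation once the functor-of-points bookkeeping is set up correctly.
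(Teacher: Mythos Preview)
Your proposal is correct and takes essentially the same approach as the paper: the paper also works at the level of $S$-points, identifies the $\theta_1\theta_2$-coefficient $E$ as satisfying a Leibniz rule up to the quadratic defect $\phi_1(a)\phi_2(b)+\phi_1(b)\phi_2(a)$, and uses the connection to subtract off exactly this defect, defining $F:=E-(f^*\Hess)(\phi_1,\phi_2)$ and checking directly that $F$ is a derivation with respect to~$f$. The paper carries this out invariantly (your second alternative) rather than via local charts and patching; the only refinement over your sketch is that the correction term is named concretely as the pulled-back covariant Hessian, whose product rule matches the defect in $E$ term by term.
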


Let $h\in C^\infty X$. The classical action function on $\SM(\R^{0|2},X)$ is defined as 
$$\mathcal{S}_h(\Phi):=\int_{\R^{0|2}} \left(\frac{1}{2}\|T\Phi\|^2-\Phi^*h\right)$$ 
for a definition of $\|T\Phi\|^2$ to be given in Section \ref{sec:Lagdens}. The previous lemma allows us to express  this function in terms of familiar geometric data on~$X$. The following is a technical computation done in Section \ref{sec:push}.

\begin{lem}\label{action} The action functional for the $0|2$-sigma model with potential~$h$ evaluated at a point $\Phi=(x,\phi_1,\phi_2,F)\in \underline{\sf SM}(\R^{0|2},X)$ is
\beq
\mathcal{S}_h(\Phi)&= &\frac{1}{2}\|F\|^2/2+\frac{1}{2}R(\phi_1,\phi_2,\phi_1,\phi_2)/2-\langle F,\nabla h\rangle-\Hess(h)(\phi_1,\phi_2),\nonumber
\eeq
where $\Hess(h)$ denotes the covariant Hessian of the function $h$. 
\end{lem}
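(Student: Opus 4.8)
The plan is to unwind the definition of $\mathcal{S}_h$ in a local frame and then reorganize the resulting expression into covariant quantities. First I would fix a point $x_0 \in X$ and choose normal coordinates (or, more invariantly, use the connection-dependent isomorphism of Lemma~\ref{geo}) so that a map $\Phi\colon \R^{0|2}\to X$ is presented as a superfield expansion $\Phi^i(\theta_1,\theta_2) = x^i + \theta_1\phi_1^i + \theta_2\phi_2^i + \theta_1\theta_2 F^i$ (possibly with connection correction terms in the top component, which is exactly what makes $F$ transform as a tangent vector rather than as coordinates). Then $\|T\Phi\|^2$ is $g_{ij}(\Phi)\, D_\alpha\Phi^i D_\alpha\Phi^j$ summed appropriately over the two odd directions $\alpha=1,2$, where $D_\alpha$ are the odd derivations on $\R^{0|2}$; I would Taylor-expand $g_{ij}(\Phi)$ and $h(\Phi)$ around $x$ in powers of the odd coordinates (the expansion terminates), multiply out, and extract the Berezinian integral, which simply picks off the coefficient of $\theta_1\theta_2$.

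The key steps, in order: (i) expand $\Phi^*h = h(x) + \theta_\alpha \phi_\alpha^i\partial_i h + \theta_1\theta_2(F^i\partial_i h - \phi_1^i\phi_2^j \partial_i\partial_j h)$ and identify, after including the $\Gamma$-terms hidden in $F$, the top component as $\langle F,\nabla h\rangle + \Hess(h)(\phi_1,\phi_2)$ with the covariant Hessian appearing precisely because the Christoffel symbols recombine with the coordinate second derivative; (ii) do the analogous but longer expansion of $\frac12\|T\Phi\|^2$, where the $F$-dependent terms assemble into $\frac12\|F\|^2$ (using $g_{ij}(x)$ to lowest order, with higher corrections canceling against $\Gamma$-terms), and the purely fermionic quartic term $\phi_1^i\phi_1^j\phi_2^k\phi_2^l$ times second derivatives of the metric reorganizes, using the normal-coordinate formula $\partial_k\partial_l g_{ij} = -\frac13(R_{ikjl}+R_{iljk})$ together with the antisymmetry of $\phi_1\wedge\phi_1$ and $\phi_2\wedge\phi_2$, into $\frac12 R(\phi_1,\phi_2,\phi_1,\phi_2)$; (iii) collect terms and compare constants. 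I would present (i) in detail and (ii) as the stated technical computation, citing Section~\ref{sec:push} for the bookkeeping.

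The main obstacle is step (ii): keeping track of how the non-tensorial pieces — the Christoffel symbols in the definition of $F$, the first and second coordinate derivatives of $g_{ij}$, and the cross terms between $F$ and the fermions — all conspire to produce manifestly covariant output. The cleanest route is to work at the center of a normal coordinate system so that $\Gamma(x)=0$ and $\partial g(x)=0$, which kills the cross terms and the cubic-in-fermion terms outright and isolates the curvature identity as the only nontrivial input; one then invokes naturality of the whole construction under the affine change of coordinates adapting $F$ to conclude the formula holds in the connection-dependent coordinates of Lemma~\ref{geo} at a general point. A secondary subtlety is fixing signs and the factors of $2$: the quartic term carries a combinatorial factor from the two ways of pairing the odd derivatives against the two fermion insertions, and one must be careful that the symmetrization $R_{ikjl}+R_{iljk}$ combined with the fermion antisymmetry yields exactly $R(\phi_1,\phi_2,\phi_1,\phi_2)$ and not twice or half of it — this is the kind of normalization that the displayed statement's repeated "$/2$" factors are tracking.
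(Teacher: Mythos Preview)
Your plan is correct and yields the lemma, but it is a genuinely different route from the paper's. The paper does \emph{not} pass to normal coordinates. It works in arbitrary local coordinates throughout: after extracting the $\theta_1\theta_2$-coefficient of $\langle T\Phi(\partial_{\theta_1}),T\Phi(\partial_{\theta_2})\rangle$, it keeps all the $\partial_k g_{ij}$ and $\partial_k\partial_l g_{ij}$ terms, recognizes the first-derivative combination as $2\Gamma_{ijk}$, then substitutes $d_2d_1x^k = F^k + \Hess(\phi_1,\phi_2)(x^k) = F^k - \Gamma^k_{ij}\phi_1^i\phi_2^j$ directly. Expanding, the cross terms between $F$ and $\Gamma$ cancel on the nose, and the three surviving quartic pieces $g_{ij}\Hess^i\Hess^j$, $2\Gamma_{ijk}\Hess^i\phi_1^j\phi_2^k$, and $\partial_k\partial_l g_{ij}\,\phi_1\phi_1\phi_2\phi_2$ are recognized as the standard local-coordinate formula for $R_{ijkl}$ in terms of $\Gamma$, $\Gamma\cdot\Gamma$, and $\partial^2 g$. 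The potential term is handled exactly as in your step (i).

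What each approach buys: your normal-coordinate shortcut kills $\Gamma$ and $\partial g$ at the center, so only $\|F\|^2$ and the $\partial^2 g$ quartic survive, and the curvature enters via the Riemann-normal identity $\partial_k\partial_l g_{ij}|_0=-\tfrac{1}{3}(R_{ikjl}+R_{iljk})$; this is shorter but leaves you with the nontrivial combinatorial step of converting that symmetrized $-\tfrac{1}{3}$ expression, contracted against $\phi_1^i\phi_1^k\phi_2^j\phi_2^l$, into exactly $R(\phi_1,\phi_2,\phi_1,\phi_2)$ (it requires the first Bianchi identity in addition to the fermion antisymmetries, which you should mention explicitly). You also need the tensoriality remark you made---that both sides are coordinate-free functions on $\underline{\sf SM}(\R^{0|2},X)$, so equality over each $x_0$ in adapted coordinates suffices---stated cleanly rather than as an afterthought. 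The paper's approach is longer but self-contained: it never invokes the normal-coordinate curvature formula or Bianchi, and the appearance of $R$ is just pattern-matching against its $\Gamma$-expression. Either is acceptable for the write-up.
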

With this in hand, we insert a parameter $\lambda\in \R$ in front of $h$, denoting the resulting 1-parameter family of action functions by $\mathcal{S}_{\lambda h}$. We calculate the partition function,
$$
Z_X(g,\lambda h) =(2\pi)^{-n}\int_{\underline{\sf SM}(\R^{0|2},X)} \exp(-\mathcal{S}_{\lambda h}(\Phi))\mathcal{D}\Phi,
$$
by first integrating over the fibers in the horizontal direction in the diagram
$$
\begin{tikzpicture}[>=latex]
\node (A) at (0,0) {$\SM(\R^{0|2},X)$};
\node (B) at (3,0) {$ \pi (TX\oplus TX)$};
\node (C) at (0,-1.5) {$TX $};
\node (D) at (3,-1.5) {$X$};
\draw[->] (A) -- (B);
\draw[->] (A) -- (C);
\draw[->] (B) -- (D);
\draw[->] (C) to node [above=1pt] {$p$} (D);
\draw (.5,-1) -- (.5,-.5) -- (1,-.5);
\fill +(.75,-.75) circle (.04);

\end{tikzpicture}
$$
which amounts to a Gaussian integral in the $F$-variable. The result is
$$
Z_X(g, \lambda h)=(2\pi)^{-n/2}\int_{\pi (TX\oplus TX)} \exp\left(-\frac{\lambda^2}{2}||\nabla h||^2+\lambda\Hess(h)(\phi_1,\phi_2)-R(\phi_1,\phi_2,\phi_1,\phi_2)\right).
$$
From here our argument is very similar in structure to Mathai and Quillen's proof of the Chern-Gauss-Bonnet Theorem~\cite{mathai-quillen}, though our particular super-geometric setting introduces a few wrinkles. If we set $\lambda=0$, the Berezinian integral defining $Z_X(g, \lambda h)$ will first project~$\exp(-R)$ onto the top component\footnote{Strictly speaking, $\exp(-R)$ has no top component; however, the Berezinian integral picks out the relevant component of $\exp(-R)$ to obtain the Pfaffian of $R$.} and a computation in Section \ref{sec:int} identifies this with the Pfaffian of the curvature so that
$$
Z_X(g,0)=(2\pi)^{-n/2}\int_X \Pf(R),
$$
recovering one side of the Chern-Gauss-Bonnet formula and leading to Theorem \ref{thm1}. As~$\lambda\to \infty$, we will show that the integral for $Z_X(g,\lambda h)$ is supported on a neighborhood of the set where~$\nabla h=0$. Assuming $h$ is Morse, the value of the limit can be computed straightforwardly,
$$
\lim_{\lambda\to\infty} Z_X(g,\lambda\cdot h)= \sum_{\nabla h=0} {\rm index}(\nabla h)=\chi(X),
$$
i.e., the integral computes the Hopf index of $\nabla h$, leading to Theorem \ref{thm2}. 

Applying Theorem \ref{thm3}, we find
$$
(2\pi)^{-n/2}\int_X \Pf(R)=Z_X(g,0)=\lim_{\lambda\to \infty}Z_X(g, \lambda h)=\sum_{\{\nabla h=0\}} {\rm index}(\nabla h),
$$
which is the Chern-Gauss-Bonnet formula. 

There is a physical interpretation of the above argument as a toy-model of a path integral localization: the integral of the Pfaffian comes from an integral over all fields, whereas the sum over critical points of $h$ is an integral over a formal neighborhood of the classical solutions of the sigma model action. The former can be thought of as a 0-dimensional ``path" integral, whereas the latter is a stationary phase approximation. This kind of localizing behavior in quantization procedures exists for other Euclidean field theories with two supersymmetries. For example the $1|2$-Euclidean field theory described by Witten in~\cite{susymorse} shows how the de~Rham complex localizes onto the Morse complex. 

\subsection{$0|\delta$-Euclidean field theories}\label{sec:pFTdef}

We give a brief and incomplete introduction to supersymmetric Euclidean field theories (EFTs) in the style of Stolz and Teichner. The full-blown definition is both lengthy and unfinished (see \cite{ST11}) but follows in the footsteps of Michael Atiyah, Maxim Kontsevich and Graeme Segal's approaches to field theories. Namely, a field theory is a symmetric monoidal functor from the $d$-category of $d|\delta$-dimensional  Euclidean bordisms over $X$ to some algebraic $d$-category, $d\hbox{\rm -}{\sf ALG}$:
$$
d|\delta\EFT(X):={\sf Fun}^\otimes_{\sf SM}(d|\delta\EB(X),d\hbox{\rm -}{\sf ALG}),
$$
and we require these functors to be fibered over supermanifolds so that the resulting field theories are, in an appropriate sense, \emph{smooth}; the notation we use for functors fibered over the category ${\sf SM}$ of supermanifolds is ${\sf Fun}_{\sf SM}$. The algebraic target is the familiar category of real vector spaces when $d=1$, some delooping (or categorification) thereof for $d>1$, and the looping (or decategorification) of vector spaces for $d=0$, namely the commutative monoid $(\underline{\R},\times)$, thought of as a symmetric monoidal 0-category. Even the naive definition of $d|\delta\EB(X)$ is quite intricate: it should be a $d$-category internal to symmetric monoidal stacks whose stack of $k$-morphisms are comprised of bundles with fiber $k|\delta$-Euclidean supermanifolds equipped with a map to~$X$. The symmetric monoidal structure comes from disjoint union in the fiber direction. Compositions should be given by gluing these $k|\delta$-manifolds in a way that respects the Euclidean geometries, which suggests that all the $k|\delta$-Euclidean manifolds be collared. These details have yet to be understood completely, except in some low-dimensional examples \cite{HKST,mingeo}. There is a related version of the above in which the bordism category is comprised of cs-manifolds and the target category deloops vector spaces over~$\C$; e.g., see \cite{ST11}. The approach of this paper is to stick to the \emph{very} low-dimensional theories: in dimensions $0|\delta$ the higher categorical complexities disappear. In fact, as we will sketch below, $0|\delta$-field theories are just certain functions on a finite dimensional supermanifold.

To be more precise, in \cite{HKST} the authors show that the internal 0-category of $0|\delta$-Euclidean bordisms over $X$ can be thought of as the free symmetric monoidal category on the connected bordisms. This comes from the geometric fact that every $0|\delta$-dimensional supermanifold is a coproduct of connected ones. We denote this category by $0|\delta\EB_{\rm conn}(X)$ (with ``conn" standing for ``connected"), and emphasize that it has no symmetric monoidal structure. Furthermore, this 0-category internal to stacks has a presentation by the quotient groupoid in supermanifolds
$$
0|\delta\EB_{\rm conn}(X)\cong \underline{\sf SM}(\R^{0|\delta},X)\sq\Euc(\R^{0|\delta}), 
$$
where $\underline{\sf SM}(\R^{0|\delta},X)$ denotes the inner hom in generalized supermanifolds (see Section \ref{sec:not}), and $\Euc(\R^{0|\delta})<\underline{\sf Diff}(\R^{0|\delta})$ is a chosen group which we call the \emph{Euclidean isometries} of~$\R^{0|\delta}$. To be explicit, the generalized supermanifold of objects is $\SM(\R^{0|\delta},X)$ and the morphisms are $\SM(\R^{0|\delta},X)\times \Euc(\R^{0|\delta})$. The source and target maps are given by the projection and precomposition, respectively, and the unit includes along the identity element of~$\Euc(\R^{0|\delta})$. To make this geometric picture precise requires one to use the functor of points, which we review briefly at the end of this section. We observe that~$\Euc(\R^{0|\delta})$ is the internal automorphism group of the unique object of~$0|\delta\EB_{\rm conn}(\pt)$. 

With a little work, one can understand fibered functors to $\underline{\R}$ as functions, 
$$
0|\delta\EFT(X)={\sf Fun}^\otimes_{\sf SM}(0|\delta\EB(X),\underline{\R})\cong {\sf Fun}_{\sf SM}(0|\delta\EB_{\rm conn}(X),\underline{\R})\cong  C^\infty(0|\delta\EB_{\rm conn}(X))^{\ev},
$$
where $\underline{\R}$ is the representable stack given by the supermanifold $\R$ and has the structure of a commutative monoid in stacks $(\underline{\R},\times)$ when considering symmetric monoidal functors. The first isomorphism uses the fact that the free functor is left adjoint to the forgetful functor. Functions on a groupoid are the invariant functions, $C^\infty(0|\delta\EB_{\rm conn}(X) )\cong C^\infty(\SM(\R^{0|\delta},X))^{\Euc(\R^{0|\delta})}$, and for this paper we take this as a definition of field theories:
$$
0|\delta\EFT(X):=\left(C^\infty(\SM(\R^{0|\delta},X))^{\Euc(\R^{0|\delta})}\right)^{\ev}.
$$ 

It will be useful that the inner hom above is represented by a supermanifold; if we choose an isomorphism $\R^{0|\delta}\cong (\R^{0|1})^\delta$, we can iterate the isomorphism $\SM(\R^{0|1},X)\cong \pi TX,$ (see Example \ref{ex:piTX}) to obtain $
\underline{\sf SM}(\R^{0|\delta},X)\cong (\pi T)^\delta X,$ where $\pi T\colon  {\sf SM}\to {\sf SM}$ is the functor that takes a supermanifold to the total space of its odd tangent bundle, and $(\pi T)^\delta$ denotes~$\delta$ applications of this functor. Thus, $0|\delta\EB_{\rm conn}(X)$ admits a description by a quotient groupoid in \emph{supermanifolds}, not just generalized ones. In particular, $0|\delta$-dimensional field theories are functions on the supermanifold $\SM(\R^{0|\delta},X)$ invariant under the action of the Euclidean group. We will now unpack this for some examples. 

\begin{ex}[$0|0\EFT(X)$] We have that $\R^{0|0}\cong \pt$, and so 
$$
\Euc(\R^{0|0}):=\{\id \} \cong \underline{\sf Diff}(\R^{0|0}).
$$ 
Hence, 
$$
0|0\EFT(X)\cong C^\infty(\underline{\sf SM}(\R^{0|0},X)\sq\{ \id\})^{\ev}\cong C^\infty(X).
$$
\end{ex}
\begin{ex}[{$0|1\EFT(X)$}] Following \cite{HKST}, we choose the Euclidean group
$$
\Euc(\R^{0|1}):=\R^{0|1}\rtimes \Z/2<\underline{\sf Diff}(\R^{0|1})\cong \R^{0|1}\rtimes \R^\times,
$$
and use the fact that $C^\infty(\SM(\R^{0|1},X))\cong C^\infty(\pi TX)\cong \Omega^\bullet(X),$ where differential forms are regarded as being $\Z/2$-graded via mod 2 reduction of the usual de~Rham grading. We claim
$$
0|1\EFT(X)\cong C^\infty(\pi TX\sq(\R^{0|1}\rtimes \Z/2))^{\ev}\cong \Omega^{\ev}_{\cl}(X),
$$
where $\Omega_{\cl}^{\ev}$ denotes the sheaf of closed, even differential forms. Indeed, functions on the quotient groupoid are functions on $\pi TX$ invariant under the group action; an exercise (sketched in Example \ref{0|1deRham}, also see \cite{HKST}) shows that the infinitesimal action of $\R^{0|1}$ on~$\Omega^\bullet(X)$ is precisely the de~Rham $d$, and the $\Z/2$ action is by the grading involution. Hence, functions fixed under these two actions are $d$-closed and of even degree. 
\end{ex}

To obtain the odd forms we require \emph{twisted} field theories. Again, there is a general definition (see \cite{HKST,ST11}) but we will be content to specialize to dimensions~$0|\delta$. A {\it twist} is a line bundle on $0|\delta\EB_{\rm conn}(X)$, and a {\it twisted field theory} is a section of this line bundle. Note that a section of the trivial line bundle is just a function, which was our notion of an (untwisted) field theory above. In summary, we define
$$
0|\delta\EFT^\mathcal{L}(X):= \Gamma(\underline{\sf SM}(\R^{0|\delta},X)\sq\Euc(\R^{0|\delta}),\mathcal{L})^{\rm ev}.
$$
Typically we want twists to be naturally defined for any manifold $X$, in a sense explained in the next paragraph. 

For any smooth map $f\colon  X\to Y$ we have a functor $0|\delta\EB_{\rm conn}(X)\to 0|\delta\EB_{\rm conn}(Y)$ which in turn induces $f^*\colon  0|\delta\EFT^\mathcal{L}(Y)\to 0|\delta\EFT^{f^*\mathcal{L}}(X)$. Hence, if $\mathcal{L}$ is a line bundle on $0|\delta\EB_{\rm conn}(\pt)$, the canonical map $p\colon X\to \pt$ produces a line bundle $p^*\mathcal{L}$ on $0|\delta\EB_{\rm conn}(X)$ for each $X$. Since $0|\delta\EFT(\pt)\cong \pt\sq \Euc(\R^{0|\delta})$, the groupoid of such line bundles is equivalent to one whose objects are homomorphisms of super Lie groups, $\rho\colon  \Euc(\R^{0|\delta})\to \R^\times$. When such a 1-dimensional representation $\rho$ is fixed, $\mathcal{L}_\rho$ will denote the corresponding line bundle. Sections of $p^*\mathcal{L}_\rho$ are functions on $\underline{\sf SM}(\R^{0|\delta},X)$ that are \emph{equivariant} with respect to the induced action of $\Euc(\R^{0|\delta})$ on $\R$ via $\rho$. Indeed, in the more general situation of line bundles over a quotient groupoid $M\sq G$ coming from $\rho\colon  G\to \R^\times$ a homomorphism, we have (see Corollary 39 of \cite{HKST})
$$
\Gamma(M\sq G,\mathcal{L}_\rho)\cong \{x\in C^\infty(M) \mid \mu^*(x)=p_1^*(x)\cdot p_2^*(\rho)\in C^\infty(M\times G)\},
$$ 
$$
\Gamma(M\sq G,\pi\mathcal{L}_\rho)\cong \{x\in C^\infty(M)^{\odd}\mid \mu^*(x)=p_1^*(x)\cdot p_2^*(\rho)\in C^\infty(M\times G)\}
$$ 
where $p_1\colon  M\times G\to M$, $p_2\colon  M\times G\to G$ are the projections and $\mu\colon  M\times G\to M$ is the action. Returning to field theories, the above discussion shows that the assignment $X\mapsto 0|\delta\EFT^{p^*\mathcal{L}_\rho}(X)$ is natural in $X$ and defines a sheaf of vector spaces on the site of manifolds. In fact, we can obtain a sheaf of graded algebras on manifolds whose degree $k$ part on $X$ is $0|\delta\EFT^{p^*\mathcal{L}_\rho^{\otimes k}}(X)$; multiplication in this algebra comes from the tensor product of line bundles. We observe that the degree 0 part is the same as twisted field theories gotten from choosing $\rho$ to be the trivial homomorphism; this gives ordinary (or untwisted) field theories since equivariant functions with respect to the trivial action on $\R$ are exactly the invariant functions. When the line bundle $\mathcal{L}_\rho$ is understood, we use the notation
$$
0|\delta\EFT^\bullet (X):=0|\delta\EFT^{p^*\mathcal{L}_\rho^{\otimes \bullet}}(X).
$$

\begin{ex} When $\delta=1$ following \cite{HKST} we choose the projection 
$$
\rho\colon  \R^{0|1}\rtimes \Z/2\to \Z/2\subset \R^\times,
$$ 
to build a line bundle $\pi \mathcal{L}_\rho$. The functions on $\SM(\R^{0|1},X)$ equivariant with respect to the action of $\R^{0|1}\rtimes \Z/2$ are precisely the closed forms in the $-1$ eigenspace of the grading involution, i.e., the odd forms. Hence
$$
0|1\EFT^{p^*\pi\mathcal{L}_\rho}(X)\cong \Omega^{\odd}_{\cl}(X).
$$
We've sketched the proof of the following result. 

\begin{thm}[Hohnhold-Kreck-Stolz-Teichner] 
There are isomorphisms of abelian groups 
$$
0|1\EFT^k (X)\cong \left\{ \begin{array}{lll} \Omega^{\ev}_{\cl}(X) & \null & k={\rm even,} \\ \Omega^{\odd}_{\cl}(X) & \null & k={\rm odd}.\end{array}\right.
$$
These isomorphisms are compatible with the graded ring structure on both sides, namely tensor products of field theories on the left (i.e., multiplication of functions on $\SM(\R^{0|1},X))$ and wedge products of forms on the right. 
\end{thm}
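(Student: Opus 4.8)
The plan is to unwind the definitions established in the preceding examples and promote the two identifications already sketched—$0|1\EFT^0(X)\cong \Omega^{\ev}_{\cl}(X)$ and $0|1\EFT^1(X)\cong\Omega^{\odd}_{\cl}(X)$—to a statement about the full $\Z$-graded (really $\Z/2$-graded, via parity of $k$) algebra $0|1\EFT^\bullet(X)$. Concretely, I would first recall that by definition $0|1\EFT^k(X)=\Gamma(\SM(\R^{0|1},X)\sq\Euc(\R^{0|1}),\mathcal{L}_\rho^{\otimes k})^{\ev}$, where $\rho\colon\R^{0|1}\rtimes\Z/2\to\Z/2\subset\R^\times$ is the sign character; since $\rho^{\otimes k}$ is the trivial character when $k$ is even and equal to $\rho$ when $k$ is odd, the sections formula quoted from Corollary 39 of \cite{HKST} immediately gives that $0|1\EFT^k(X)$ is the space of functions (of the appropriate parity) on $\SM(\R^{0|1},X)\cong\pi TX$ satisfying $\mu^*x = p_1^*x\cdot p_2^*\rho^{\otimes k}$. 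The content of the two preceding examples is precisely that this space is $\Omega^{\ev}_{\cl}(X)$ for $k$ even and $\Omega^{\odd}_{\cl}(X)$ for $k$ odd: the $\R^{0|1}$-equivariance forces $d$-closedness in both cases (the infinitesimal generator of the $\R^{0|1}$-action is the de Rham differential on $\Omega^\bullet(X)=C^\infty(\pi TX)$), while the $\Z/2$-equivariance with the weight-$k$ sign character selects forms whose parity matches $k$ under the grading involution.

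Next I would address the graded ring structure. Under $C^\infty(\pi TX)\cong\Omega^\bullet(X)$ the multiplication of functions on $\pi TX$ corresponds exactly to the wedge product of forms—this is a standard fact about the odd tangent bundle, recorded earlier in the paper around Example~\ref{ex:piTX}—and the tensor product of the twisting line bundles $\mathcal{L}_\rho^{\otimes k}\otimes\mathcal{L}_\rho^{\otimes \ell}\cong\mathcal{L}_\rho^{\otimes(k+\ell)}$ corresponds on characters to $\rho^{\otimes k}\cdot\rho^{\otimes\ell}=\rho^{\otimes(k+\ell)}$. Therefore the product of a section of $\mathcal{L}_\rho^{\otimes k}$ with a section of $\mathcal{L}_\rho^{\otimes\ell}$, viewed as the pointwise product of the underlying equivariant functions, automatically satisfies the equivariance condition with weight $k+\ell$; this is exactly the assertion that the isomorphisms intertwine tensor product of twisted field theories with wedge product of forms. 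One should also note that wedging two closed forms yields a closed form, so the target $\bigoplus_k\Omega^{(k\bmod 2)}_{\cl}(X)$ is closed under the induced multiplication, matching the left side.

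Finally I would remark that everything is natural in $X$, so the isomorphism is one of sheaves of graded rings, and that the only inputs beyond the two worked examples are the compatibility of the $\pi TX$-function product with the wedge product and the multiplicativity of the character $\rho$ under tensor powers—both elementary. The one point deserving genuine care, and the step I expect to be the main obstacle, is the verification that the infinitesimal $\R^{0|1}$-action on $C^\infty(\pi TX)$ really is the de Rham differential and that the finite equivariance condition $\mu^*x=p_1^*x\cdot p_2^*\rho^{\otimes k}$ is equivalent to ($x$ closed) and ($x$ of parity $k$)—i.e., that there are no additional constraints hidden in integrating the infinitesimal condition over the possibly-nilpotent directions of $\R^{0|1}$ in families. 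This is handled by working with $S$-points: an $S$-family of points of $\SM(\R^{0|1},X)$ is an $S$-family of maps $S\times\R^{0|1}\to X$, and one checks the equivariance identity in $C^\infty(\SM(\R^{0|1},X)\times\Euc(\R^{0|1}))$ directly, expanding in the odd coordinate of $\R^{0|1}$; since the group is $\R^{0|1}\rtimes\Z/2$ with a single odd parameter, the expansion terminates and yields exactly the two conditions. This is the computation sketched in Example~\ref{0|1deRham} and carried out in \cite{HKST}, so I would cite it rather than reproduce it.
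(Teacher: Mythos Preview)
Your proposal is correct and follows essentially the same route as the paper, which itself only offers a sketch via the two preceding examples (the untwisted case giving $\Omega^{\ev}_{\cl}(X)$ and the $\pi\mathcal{L}_\rho$-twisted case giving $\Omega^{\odd}_{\cl}(X)$) together with the remark that tensor powers of the sign character are determined by parity of $k$. Your treatment is in fact more thorough than the paper's, since you spell out the ring-structure compatibility and flag the passage from infinitesimal to finite $\R^{0|1}$-equivariance; the paper leaves both of these implicit and defers the computation to Example~\ref{0|1deRham} and \cite{HKST}.
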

\end{ex}

\begin{ex} We can generalize the above example to $0|\delta$-Euclidean field theories by declaring $\Euc(\R^{0|\delta}):=\R^{0|\delta}\rtimes O(\delta)$ to be the isometry group, and take $\rho\colon  \R^{0|\delta}\rtimes O(\delta)\to \Z/2\subset \R^\times$ to define a twist where $\rho$ first projects to $O(\delta)$, then applies the determinant homomorphism. When $\delta$ is even, we take $\mathcal{L}_\rho$ as the twist, and when $\delta$ is odd we take $\pi\mathcal{L}_\rho$; this is the choice that leads to nontrivial sections, since $-{\rm id}\in O(\delta)$ acts by the grading involution on functions on $\SM(\R^{0|\delta},X)$. Loosely, these $0|\delta$-dimensional field theories are a generalization of closed differential forms. For a different generalization (that considers the full diffeomorphism group of $\R^{0|\delta}$) see Kochan and \v Severa in \cite{gorms}. \end{ex}

\subsection{Concordance}

Given a closed differential form, one can extract a topological invariant by considering the de~Rham cohomology class that form represents. Analogously, $0|\delta$-EFTs give supergeometric objects generalizing closed forms, and one can extract topological information from these. One way to implement this passage from geometric data to topological data is to take \emph{concordance classes}.

\begin{defn} 
Two twisted field theories $E_+, E_-\in 0|\delta\EFT^k(X)$ are \emph{concordant} if there exists a twisted field theory $\tilde{E}\in 0|\delta\EFT^k(X\times \R)$ such that $i_\pm^*\tilde{E}= \pi_\pm^*E_\pm,$
where 
$$i_\pm\colon  X\times (\pm 1,\pm\infty) \hookrightarrow X\times \R, \quad \pi_\pm\colon  X\times (\pm 1,\pm\infty)\to X$$
are the usual inclusion and projection maps, respectively. We denote the set of field theories up to concordance by $0|\delta\EFT^k[X]$ and the concordance class of a field theory $E$ by $[E]$.\label{def:conc}
\end{defn}

It is easy to check that concordance defines an equivalence relation. In fact, for field theories twisted by a line bundle $\mathcal{L}_\rho$ the ring structure on $0|\delta\EFT^\bullet(X)$ descends to concordance classes. Field theories up to concordance furnish an additive contravariant functor from manifolds to graded rings,
$$
0|\delta\EFT^\bullet[-]\colon  {\sf Man}^{\op}\to {\sf GRing}.
$$ 
It is straightforward to check that smoothly homotopic maps between manifolds are assigned the same homomorphism between graded rings. In particular, the graded ring $0|\delta\EFT^\bullet[X]$ is a homotopy invariant of $X$.

An application of Stokes' Theorem shows the following; see also Proposition \ref{concord}.
\begin{thm}[Hohnhold-Kreck-Stolz-Teichner]
There is an isomorphism of abelian groups
\beq
0|1\EFT^k[X]\cong \left\{ \begin{array}{lll} H_{\dR}^{\ev}(X) & \null & k={\rm even,} \\ H_{\dR}^{\odd}(X) & \null & k={\rm odd,}\end{array}\right.\nonumber
\eeq
compatible with the ring structures on both sides, where $H^\bullet_{\dR}$ is de~Rham cohomology with its usual cup product.\end{thm}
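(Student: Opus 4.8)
The plan is to translate everything into de~Rham language using the Hohnhold--Kreck--Stolz--Teichner identification $0|1\EFT^k(Y)\cong\Omega^{\ev}_{\cl}(Y)$ for $k$ even and $\cong\Omega^{\odd}_{\cl}(Y)$ for $k$ odd, applied both to $Y=X$ and to $Y=X\times\R$. Under this dictionary a field theory $E\in 0|1\EFT^k(X)$ is a closed form $\omega_E$ whose parity matches $k$, and a concordance $\tilde E\in 0|1\EFT^k(X\times\R)$ between $E_+$ and $E_-$ is a closed form $\tilde\omega$ on $X\times\R$ of the same parity with $i_\pm^*\tilde\omega=\pi_\pm^*\omega_{E_\pm}$. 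The candidate map is $[E]\mapsto[\omega_E]\in H^{\bullet}_{\dR}(X)$; it is obviously surjective, since every closed form is a field theory, so the content is that it is well defined and injective, i.e., that $E_+$ and $E_-$ are concordant if and only if $\omega_{E_+}$ and $\omega_{E_-}$ are cohomologous.

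For ``concordant $\Rightarrow$ cohomologous'' I would use homotopy invariance of de~Rham cohomology: given a concordance $\tilde\omega$, consider the sections $j_s\colon X\to X\times\R$, $x\mapsto(x,s)$; all of these are smoothly homotopic, hence $[j_s^*\tilde\omega]\in H^\bullet_{\dR}(X)$ is independent of $s$, and taking $s>1$ versus $s<-1$ gives $[\omega_{E_+}]=[\omega_{E_-}]$. For the converse, if $\omega_{E_+}-\omega_{E_-}=d\eta$ with $\eta$ of the parity opposite to $k$, choose $\psi\in C^\infty(\R)$ with $\psi\equiv 0$ on $(-\infty,-1)$ and $\psi\equiv 1$ on $(1,\infty)$, and set $\tilde\omega:=\pi^*\omega_{E_-}+d(\psi\cdot\pi^*\eta)$. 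This form is closed, has the correct parity (since $d\psi$ is a $1$-form and $\eta$ has the opposite parity to the $\omega_{E_\pm}$), and a direct check on the two ends---where $d\psi$ vanishes and $\psi$ equals $0$ or $1$---shows $i_+^*\tilde\omega=\pi_+^*\omega_{E_+}$ and $i_-^*\tilde\omega=\pi_-^*\omega_{E_-}$; hence $\tilde\omega$ defines the required element of $0|1\EFT^k(X\times\R)$ and exhibits the concordance.

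For the ring structure: the HKST identification $0|1\EFT^\bullet(X)\cong\Omega^\bullet_{\cl}(X)$ intertwines tensor product of field theories (multiplication of functions on $\SM(\R^{0|1},X)$) with the wedge product of forms, and addition of field theories with addition of forms; both operations are visibly compatible with concordance (add, respectively tensor, the corresponding concordances, using the same cutoff $\psi$), so the bijection above upgrades to a graded ring isomorphism onto $H^\bullet_{\dR}(X)$ with the cup product, which on de~Rham cohomology is exactly the product induced by $\wedge$.

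The genuinely substantive step is the equivalence ``concordant $\iff$ cohomologous''; everything else is bookkeeping. I expect no serious obstacle, but the points requiring care are (i) matching the $\Z/2$-grading throughout---ensuring the primitive $\eta$ and the interpolation $d(\psi\,\pi^*\eta)$ carry the parity dictated by $k$---and (ii) confirming that $\tilde\omega$ is closed and of the correct parity on \emph{all} of $X\times\R$, not merely on the collars, so that it genuinely defines an element of $0|1\EFT^k(X\times\R)$ rather than just a form with the right restrictions. Both are immediate from the explicit formula, which is why an application of Stokes' theorem (here packaged as the bump-function homotopy) suffices.
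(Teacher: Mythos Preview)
Your proposal is correct and matches the paper's approach essentially verbatim: the paper proves the general Proposition~\ref{concord} (concordant $\iff$ $\Delta$-cohomologous) and specializes to $\delta=1$ where $\Delta=d$, using exactly your bump-function interpolation $\tilde E=E_-+\Delta(b(\lambda)e)$ for one direction and an explicit chain-homotopy operator $Q(\tilde E)=\int_{-1}^1 i_\lambda^* I_{\partial_\lambda}\tilde E\,d\lambda$ for the other. The only cosmetic difference is that for ``concordant $\Rightarrow$ cohomologous'' you invoke homotopy invariance of de~Rham cohomology as a black box, whereas the paper writes out the underlying Cartan-formula homotopy operator; these are the same argument, and the paper itself summarizes the whole thing as ``an application of Stokes' Theorem.''
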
 
\begin{rmk} Although $0|2$-dimensional theories share many similar structures with de~Rham cohomology, it turns out that the functor $X\mapsto 0|2\EFT^\bullet[X]$ fails to have Mayer-Vietoris sequences. \end{rmk}

\subsection{Sigma models and a cartoon of quantization}

Classical supersymmetric sigma models are a natural generalization of classical mechanics---rather than studying paths in a Riemannian manifold~$X$, we study maps of $d|\delta$-dimensional manifolds into~$X$. The sigma model action functional generalizes the energy of a path: we use certain geometric structures on the source manifold together with the metric on $X$ to make sense out of~$\int_\Sigma |d\Phi|^2$ for~$\Phi\colon \Sigma\to X$.

To warm-up, we'll review quantization in ordinary mechanics. Let 
$$
\mathcal{F}_tX:=\underline{\sf SM}(S^1_t,X)
$$
denote maps from loops of circumference~$t$, denoted~$S_t^1$, to $X$. Let $g$ be a Riemannian metric on $X$ and $h\in C^\infty(X)$ a smooth function. Define the \emph{classical action} by
$$
\mathcal{S}_h(\gamma):=\int_{S^1_t} \left(\frac{1}{2}\|\dot{\gamma}\|^2-\gamma^*h\right)dt.
$$
A quantization procedure for this classical system is furnished by the path integral, or more precisely, the Wiener measure on the paths in $X$. As shown by Anderson and Driver~\cite{driver} and B\"ar and Pf\"affle~\cite{baer}, one can view this measure as a limit of finite dimensional measures which is how one makes sense out of the formulas like
$$
\langle \mathcal{O} \rangle_t = \int_{\mathcal{F}_tX} \mathcal{O} \frac{\exp(-\mathcal{S}_h)}{N_t}\mathcal{D}_t\gamma,
$$
where $\mathcal{O}\in C^\infty(LX)$ is a function (alias, classical observable), and the Wiener measure is
$$
\mathcal{D}_tW_g=\frac{\exp(-\mathcal{S}_h)}{N_t}\mathcal{D}_t\gamma.
$$
The right hand side cannot be taken literally: the formula reflects the ingredients that go into the finite dimensional approximations to the measure, but a measure $\mathcal{D}_t\gamma$ and normalization $N_t$ do not exist independently.

Although analogous measures for $d>1$ have yet to be constructed rigorously, in many examples there are quantization procedures that mimic the behavior of such a measure. Below we formalize some desired properties and give mathematical examples. There are three important features of the Wiener measure we wish to emphasize and incorporate in the structure. First, the classical energy is an essential ingredient: $D_t\gamma$ and $N_t$ do not exist. Hence, if we hope to construct a quantization procedure we will require some geometric data on $X$ that allows us to write a kinetic energy term in the classical action, which we think of (at least philosophically) as defining a Gaussian measure on the corresponding mapping space. In some sense, the classical sigma model \emph{is} this measure. Second, continuously varying the Riemannian metric on $X$ results in a continuous family of Wiener measures, which allows us to consider quantization in families. For example, a 1-parameter family of metrics on~$X$ results in a 1-parameter family of quantum mechanical theories, and more generally a $Y$-family of metrics results in a $Y$-family of quantum theories. Lastly, the Wiener measure plays nicely with isometries of circles, in the sense that the quantum expectation value $\langle \mathcal{O}\rangle_t$ depends only on the \emph{isomorphism class} of the loop $\gamma\colon S^1_t\to X$, i.e., the parameter~$t$. By varying~$t$ we can assemble this quantum observable into a smooth function on the moduli stack of Euclidean circles. Our definition of quantization attempts to be as general as possible while satisfying these three restrictions. 

We will conclude this subsection by defining sigma models, and in the next two subsections will set up the relevant background to define quantization. 

The primary input for a sigma model is a \emph{stack of fields}
\beq
\mathcal{F}_\sigma X:=\underline{\sf SM}(\Sigma^{d|\delta},X)\sq \Euc(\Sigma_\sigma),\label{eq:fields}
\eeq
for $\sigma$ a chosen geometry on $\Sigma$, and for simplicity we are assuming that $\Sigma$ is closed. Following the example from mechanics, we want a classical action on $\mathcal{F}_\sigma X$ invariant under this isometry group, which is precisely a function on the stack. This prompts a definition.

\begin{defn} A $d|\delta$-dimensional (classical) sigma model is a function $\mathcal{S}\in C^\infty(\SM(\Sigma^{d|\delta},X)$ depending on a metric $g$ and smooth function $h$ on $X$ that is invariant under the action of the isometry group of $\Sigma$ with geometry $\sigma$. 
\end{defn}

Typically the function defining the classical sigma model will be of the form
$$
\mathcal{S}_h(\Phi):=\int_{\Sigma}  \left(\frac{1}{2}\|T\Phi\|^2-\Phi^*h\right){\rm vol}_{\Sigma,\sigma}, \quad \Phi\in \mathcal{F}_\sigma X,
$$
where the first term is a \emph{kinetic} term that computes something like the energy of a map and the second term is a \emph{potential} term that pulls back a function on~$X$ and integrates it on~$\Sigma$. 

In the dimensions where they exist, classical sigma models are fairly well-understood mathematically (see Freed \cite{5lectures}), but as we move into higher dimensions and add more geometric data to $\Sigma$, it is unclear if they can be used to define a Gaussian measure on fields analogous to the Wiener measure. In several examples of \emph{supersymmetric} sigma models the \emph{partition function}
$$
Z_X^{d|\delta}(g,h):=\int_{\mathcal{F}_\sigma X} \frac{\exp(-\mathcal{S}_h(\Phi))}{N_\sigma}\mathcal{D}_\sigma\Phi,
$$ 
frequently constructed through physical reasoning, turns out to be a topological invariant of $X$. Some of the known examples are listed in Table \ref{table}. This motivates our desire to understand the extent to which a rigorous definition of quantization might be lurking in the examples where partition functions encode topology. 

\begin{table}
\begin{tabular}{|c|c|c|c|c|}
\hline
dim &  partition function \\
\hline 
$0|1$ &  0, for $\dim(X)>0$; signed cardinality when dim$(X)=0$ \\
$1|1$ &  $\hat{A}$-genus (Alvarez-Gaume, \cite{Alvarez})\\
$2|1$ &  Witten genus? (Witten, \cite{witten_dirac}; Stolz-Teichner, \cite{ST11}) \\
\hline
$0|2$ &  Euler characteristic (Theorem \ref{thm1}) \\
$1|2$ &  Euler characteristic and signature (Witten, \cite{susymorse}) \\
$2|2$ &  $S^1$-Equivariant signature of $LX$? (Witten, \cite{witten_dirac}) \\
\hline
\end{tabular}
\vspace{.1in}
\caption{In the listed dimensions, supersymmetric sigma models give (or are conjectured to give) the listed invariant. Conjectural statements are marked by ``?". \label{table}}
\end{table}

\subsection{Stacky integrals and renormalizable $0|\delta$-EFTs} \label{subsec:quant}

For sigma models where the source manifold $\Sigma$ is $0|\delta$-dimensional, the space of fields, $\underline{\sf SM}(\R^{0|\delta},X),$ is a \emph{finite}-dimensional supermanifold. So, with a bit of work, we can make the integration defining quantization completely rigorous. We observe that for $\delta>0$, fields can form an interesting stack owing to the nontrivial automorphisms of~$\R^{0|\delta}$. Weinstein has explained integrals on smooth stacks~\cite{weinstein}, and it turns out that our construction can be interpreted in terms of his definition. The stacky aspects end up being comparably easy, so we'll be brief in our description. 

When computing an integral on a stack, one wants to include isotropy in the computation; for example, for a stack $\pt\sq G$ arising from the action of a finite group $G$ on the point, there is a counting measure such that the volume of stack is $1/|G|$. In the case where a (possibly nondiscrete) Lie group $G$ acts on a manifold $M$, Weinstein shows (\cite{weinstein}, Theorem 3.2) that a measure on the stack $M\sq G$ arises from a section of the Berezinian line ${\rm Ber}(\mathfrak{g}) \otimes {\rm Ber}(M)$~on~$M$ invariant under the action of $G$, where $\mathfrak{g}$ is the Lie algebra of $G$ and we use the inclusion of Lie algebras $\mathfrak{g}\to \Gamma(TM)$ given by the action.

In the case of $0|\delta$-dimensional field theories, we have $M=\SM(\R^{0|\delta},X)$ and $G=\Euc(\R^{0|\delta})$. Proposition \ref{intprop} will show that the the Berezinian line on $\SM(\R^{0|\delta},X)$ is canonically trivialized; we denote the trivializing section by $\mathcal{D}\Phi$. The bundle on $\SM(\R^{0|\delta},X)$ coming from the Berezinian of the Lie algebra of $\Euc(\R^{0|\delta})$ is trivial, so any two choices of trivialization differ by a nonvanishing function. There is a standard trivialization coming from our chosen groupoid presentation of the underlying stack; we will use a different trivialization gotten from the standard one using the exponentiated classical action viewed as a nonvanishing function. Since $\mathcal{S}$ is invariant under the action of $\Euc(\R^{0|\delta})$, so is $\exp(-\mathcal{S})D\Phi$ and this data gives us a Weinstein volume form on the stack $\SM(\R^{0|\delta},X)\sq \Euc(\R^{0|\delta})$. 

In our case, Weinstein's formula tells us that to integrate a function we compute
\beq
\int_{\SM(\R^{0|\delta},X)}\omega(\Phi) \exp(-\mathcal{S}(\Phi))\frac{\mathcal{D}\Phi}{N},\label{schematic}
\eeq 
where $N$ is some finite normalization constant into which we've absorbed the volume of~$\Euc(\R^{0|\delta})$. When the above is defined, we are guaranteed that the result is an invariant of the stack $\SM(\R^{0|\delta},X)\sq \Euc(\R^{0|\delta})$ together with our choice of measure. 

However, $\SM(\R^{0|\delta},X)\sq \Euc(\R^{0|\delta})$ is not a proper stack, so a priori there might be very few integrable functions: $\SM(\R^{0|\delta},X)$ is noncompact for $\delta>1$. Indeed, this forces $\mathcal{S}\ne 0$ for $\delta>1$ if we want, e.g., 1 to be integrable---we \emph{require} an interesting action functional for our stack to have finite volume, or (equivalently) to define a partition function. As we will describe, all functions that are polynomial ``at infinity" will be integrable with respect to our chosen measure. This polynomial behavior can be described in terms of the renormalization group action on field theories.

\begin{defn} \label{rmk:RG}The \emph{renormalization group} (RG) \emph{action} on $0|\delta\EFT^\bullet(X)$ is the action induced from dilating $\R^{0|\delta}$ by $\R^\times$, which in turn gives an action on $0|\delta\EB_{\rm conn}(X)$, and hence $0|\delta$-EFTs. \end{defn}

Since $0|\delta$-dimensional field theories over $X$ form a vector space, we can consider the field theories that are in the $\lambda$-eigenspace of the infinitesimal RG-action, i.e., where $r\in\R^\times$ acts by $r^\lambda$. 

\begin{rmk} The action of $\R^\times$ extends to one by the monoid $\R$, so it turns out that $\lambda$ is necessarily a natural number. For our purposes it will suffice to observe that the eigenvalues of the infinitesimal $\R^\times$-action are positive integers. 
\end{rmk}

\begin{defn} Define \emph{polynomial functions} on $\underline{\sf SM}(\R^{0|\delta},X)$ as 
$$
C^\infty_{\rm pol}(\underline{\sf SM}(\R^{0|\delta},X)):=\bigoplus_{k\in \N}  \{f \in C^\infty(\underline{\sf SM}(\R^{0|\delta},X)) \ |\ r\cdot f= r^k f, \ r\in \R_{>0}\},
$$
where $r\cdot f$ denotes the action of $\R_{>0}$ on functions on $\underline{\sf SM}(\R^{0|\delta},X)$ induced by the dilation action of $\R_{>0}$ on $\R^{0|\delta}$. When $r\cdot f=r^kf$, we say $f$ has \emph{polynomial degree $k$}. \label{def:pol}
\end{defn}

\begin{defn} \emph{Renormalizable} twisted $0|\delta$-Euclidean field theories over $X$ are 
$$
0|\delta\EFT_{\rm pol}^\bullet (X):=\bigoplus_{k\in \N}  \{E\in 0|\delta\EFT^\bullet(X) \ |\ r\cdot E= r^k E, \ r\in \R^\times\},
$$
where $r\cdot E$ denotes the action of the renormalization group on field theories. 
\label{def:renorm} \end{defn}

\begin{rmk} One can rephrase the polynomial growth condition by putting $\omega$ in Equation~\ref{schematic} into the exponent: then renormalizablity translates into at most logarithmic growth with the RG action which (ignoring formal details) agrees with renormalizability in Kevin Costello's sense; see Definition 7.2.1 in \cite{costbook}. \end{rmk}

\subsection{Quantization in families}
A key aspect of our definition and construction of quantization is the ability to quantize in families. To set this up, we need to make sense of families of geometric structures. 

Consider the category whose objects are submersions of manifolds $p\colon X\to Y$ and whose morphisms are fiberwise isomorphisms, i.e., smooth maps such that the canonical map to the pullback is an isomorphism. We give this category the structure of a site by declaring a family $\{ f_i\colon p_i\to p\}$ to be a covering family if the map from the coproduct $\coprod_i p_i$ is surjective. Denote this site by ${\sf Subm}$. There is a similar site defined by restricting attention to \emph{proper} submersions; we denote this site by ${\sf pSubm}$.

\begin{defn} 
Let a sheaf of sets $\mathcal{G}$ on the site ${\sf Subm}$ be given. A \emph{$\mathcal{G}$-oriented submersion} is a submersion $p\colon X\to Y$ together with a section $g\in \mathcal{G}(p)$. Similarly, a \emph{$\mathcal{G}$-oriented proper submersion} is a proper submersion with a chosen section. For a manifold $X$, a \emph{$\mathcal{G}$-structure on $X$} is a section of $\mathcal{G}$ applied to the submersion $X\to \pt$.
\end{defn}

\begin{ex} The sheaf ${\sf Vol}$ on the site ${\sf Subm}$ assigns to $p\colon X\to Y$ a smoothly varying family of volume forms, i.e., a nonvanishing section of $\Lambda^{\rm top}(Vp)$ where $Vp$ denotes the vertical tangent bundle to the projection~$p$. \label{ex:geovol}\end{ex}

\begin{ex} The sheaf ${\sf Or}$ on the site ${\sf Subm}$ assigns to a submersion $\pi\colon X\to Y$ the set of smoothly varying orientations on $Vp$. \end{ex}

\begin{ex} The sheaf ${\sf Riem}$ on the site ${\sf Subm}$ assigns to a submersion $\pi\colon X\to Y$ the set of fiberwise Riemannian metrics, i.e., an inner product on the vertical tangent bundle. A ${\sf Riem}$-structure on $X$ is exactly a Riemannian metric on $X$. \label{ex:Riem}\end{ex}

\begin{ex} The sheaf $C^\infty$ on the site ${\sf Subm}$ assigns to a submersion $\pi\colon X\to Y$ smooth function on the total space, viewed as the set of smoothly varying smooth functions on the fibers. \label{ex:Cinfty}\end{ex}

\begin{defn}
\emph{A $\mathcal{G}$-oriented quantization} of $0|\delta$-EFTs is an assignment to each $\mathcal{G}$-oriented proper submersion $p\colon X\to Y$ a map
$$
p_!(g)\colon 0|\delta\EFT^\bullet_{\rm pol}(X)\to 0|\delta\EFT^{\bullet-n}_{\rm pol}(Y)
$$
where the $n$ is the fiber dimension of the proper submersion $p$. The map $p_!$ is called \emph{quantization along $p$}. \label{def:quant}\end{defn} 

\begin{rmk} Construction of quantization procedures for $d|\delta$-dimensional field theories will likely involve sheaves $\mathcal{G}$ on ${\sf Subm}$ of $d$-categories. We have restricted to sheaves of sets in this paper since our examples have~$d=0$.  \end{rmk}

\begin{defn} Given a $\mathcal{G}$-oriented quantization, the \emph{sigma model partition function}, denoted $Z_X^{0|\delta}(g)$ is the image of $1\in 0|\delta\EFT^0_{\rm pol}(X)$ under quantization along $X\to \pt$. \end{defn}

\begin{rmk} The moduli space of super Euclidean geometries on $\R^{0|\delta}$ is a single point (albeit with nontrivial automorphisms), so a partition function of a $0|\delta$-dimensional field theory will be a function on this point, i.e., just a \emph{number}. In higher dimensions we expect an honest function on an interesting moduli space. \end{rmk}

\begin{defn} For a fixed $X$, define a sheaf $\mathcal{G}_X$ on manifolds by 
$$\mathcal{G}_X(Y):=\mathcal{G}(p\colon X\times Y\to Y),$$ 
where $p$ is the projection. The sheaf $\mathcal{G}_X$ is the \emph{smooth space of $\mathcal{G}$-geometries on $X$}. 
\end{defn}

\begin{ex}\label{ex:0|1int} For $0|1$-EFTs, ordinary integration of differential forms provides a quantization for ${\sf Or}$-oriented submersions. Furthermore, when $X$ is an ordinary manifold all field theories are renormalizable. However, the partition function is not so interesting,
$$
Z_X^{0|1}(g)=0,
$$ 
unless $X$ is a 0-manifold in which case $Z_X^{0|1}$ computes a weighted, signed cardinality of~$X$. Although it is usually trivial, $Z_X^{0|1}$ is an additive, multiplicative topological invariant of oriented manifolds. \end{ex}

\begin{ex} The main example in this paper occurs when $\delta=2$. The existence of quantization will rely on some constructions in supergeometry. 
\begin{thm}\label{02push} Let $\mathcal{G}={\sf Riem}\times C^\infty$ be the sheaf parametrizing metrics and smooth functions on the fibers of submersions. The $0|2$-sigma model defines a $\mathcal{G}$-oriented quantization 
\beq
p_!(g) \colon 0|2\EFT_{\rm pol}^\bullet(X) \to 0|2\EFT_{\rm pol}^{\bullet-n}(Y)
\eeq
as in  Equation~\ref{schematic}. 
\end{thm}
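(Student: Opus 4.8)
The plan is to realize the pushforward $p_!(g)$ as a fiberwise Berezinian integral against the exponentiated classical action, in the spirit of the schematic formula \eqref{schematic}. First I would set up the \emph{relative} space of fields. Postcomposition with $p$ defines $\SM(p)\colon\SM(\R^{0|2},X)\to\SM(\R^{0|2},Y)$, and under the iterated identification $\SM(\R^{0|2},-)\cong(\pi T)^2(-)$ this is simply $(\pi T)^2$ applied to $p$; since $\pi T$ carries submersions to submersions, $\SM(p)$ is a submersion of supermanifolds of relative dimension $2n|2n$, and it is equivariant for the $\Euc(\R^{0|2})$-action and for the renormalization-group dilations on its source and target. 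A choice of connection on the vertical bundle $Vp$ yields, via the relative form of Lemma~\ref{geo}, an identification of the fiber of $\SM(p)$ over a point $\Phi_Y$ with the quadruples $(x,\phi_1,\phi_2,F)$, where $x$ lies over the basepoint of $\Phi_Y$, the $\phi_1,\phi_2$ are odd vertical tangent vectors, and $F$ is an even vertical tangent vector. I would then prove a relative analogue of Proposition~\ref{intprop}: the Berezinian line of the relative tangent bundle of $\SM(p)$ carries a canonical trivializing section $\mathcal{D}\Phi_{X/Y}$ determined by the fiberwise metric $g$, and its transformation law under $\Euc(\R^{0|2})$ and the dilation action produces the degree shift $\bullet\mapsto\bullet-n$ of the statement, exactly as fiberwise integration of differential forms lowers the de~Rham degree by $n$ in the $\delta=1$ case of Example~\ref{ex:0|1int}.

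With this structure in hand, the relative form of Lemma~\ref{action} produces the classical action $\mathcal{S}_h\in C^\infty(\SM(\R^{0|2},X))$, built from $g$ and $h$ and manifestly $\Euc(\R^{0|2})$-invariant, and I would set
\[
p_!(g)(E):=\frac{1}{N}\int_{\SM(p)}E\,\exp(-\mathcal{S}_h)\,\mathcal{D}\Phi_{X/Y},
\]
the fiberwise Berezinian integral, with $N$ the normalization absorbing the volume of $\Euc(\R^{0|2})$ as in \eqref{schematic}. Granting that the integral converges, the output is a function on $\SM(\R^{0|2},Y)$ which, because $E$, $\exp(-\mathcal{S}_h)$ and $\mathcal{D}\Phi_{X/Y}$ are each equivariant with the characters just recorded, is automatically a twisted field theory of degree $\bullet-n$; that it is renormalizable follows because the polynomial-growth condition propagates through Gaussian integration, so the fiber integral of a renormalizable $E$ is again a finite sum of RG-eigenfunctions on $\SM(\R^{0|2},Y)$, i.e.\ an element of $0|2\EFT^{\bullet-n}_{\rm pol}(Y)$.

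I expect the main obstacle to be convergence: showing the fiberwise Berezinian integral is genuinely defined, depends smoothly on the base parameters, and lands in the renormalizable subspace rather than outside it. The fibers of $\SM(p)$ are compact in the directions $x\in X_y$ --- here properness of $p$ is used --- but are noncompact in the even directions $F\in Vp$, which form a vector bundle; the $\phi$-directions are nilpotent and integrate out harmlessly. By Lemma~\ref{action} the $F$-dependence of $\exp(-\mathcal{S}_h)$ is Gaussian, of the shape $\exp(-\tfrac14\|F\|^2+\langle F,\nabla h\rangle+\cdots)$, while a renormalizable $E$ depends polynomially on $F$ by Definition~\ref{def:pol}; hence the fiberwise integral is the Gaussian integral of a polynomial, which converges and varies smoothly with $\Phi_Y\in\SM(\R^{0|2},Y)$ and with the point of $Y$. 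This is exactly the step that forces the restriction to \emph{renormalizable} field theories in the sense of Definition~\ref{def:renorm}, since for a general $E\in0|2\EFT^\bullet(X)$ with worse growth in the $F$-variable the integral may diverge. Carrying this out uniformly in all parameters, and verifying that the result is an honest smooth $\Euc(\R^{0|2})$-equivariant function rather than merely a pointwise one, is the technical heart of the argument; the remaining checks --- independence of the auxiliary connection on $Vp$, compatibility with pullback along fiberwise isomorphisms (needed for the metric- and potential-independence of Corollary~\ref{thm3}), and a Fubini identity for composites of submersions --- then follow from routine manipulations of Berezinian integrals together with the naturality of the mapping-space construction.
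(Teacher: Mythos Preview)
Your approach is the same as the paper's at the level of strategy --- realize $p_!$ as fiberwise Berezinian integration against $\exp(-\mathcal{S}_h)$, and use that renormalizable inputs have only polynomial growth in the noncompact $F$-direction so the Gaussian wins --- but the execution differs in two respects worth flagging.

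First, you work with a general proper submersion $p\colon X\to Y$ and a \emph{relative} Berezinian on $\SM(p)$, whereas the paper restricts throughout to the product case $p\colon X\times Y\to Y$, using the splitting $\SM(\R^{0|2},X\times Y)\cong\SM(\R^{0|2},X)\times\SM(\R^{0|2},Y)$ and integrating only over the first factor. Your version is more general and arguably what the statement demands; the paper's suffices for its applications (partition functions and concordance). The ``remaining checks'' you list --- connection independence, Fubini for composites --- are not carried out in the paper.

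Second, and this is where you move too fast, you assert that the output is ``automatically a twisted field theory of degree $\bullet-n$'' because the ingredients are equivariant. The paper does not treat this as automatic. For $\R^{0|2}\rtimes SO(2)$-invariance it Taylor-expands $\omega$ in the fiber generators $d_ix$ and argues that the component $\omega_{12}$ surviving the Berezinian projection is either itself $d_i$-invariant, or else its coefficient is forced to be a total derivative and hence integrates to zero. That is a genuine Stokes-type cancellation, not a formal consequence of an invariant measure; your abstract equivariance principle would need an analogous ``$\int d_i^X(\cdots)=0$'' lemma to be complete. For the grading shift the paper analyzes the residual $\Z/2\cong O(2)/SO(2)$ action directly and invokes Wick's lemma to kill odd polynomial-degree contributions in $d_2d_1x$, pinning the parity shift to exactly $n$ per fiber dimension. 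Your appeal to the transformation law of $\mathcal{D}\Phi_{X/Y}$ is morally the same statement, but as written it does not visibly rule out spurious contributions of the wrong parity; the vanishing of odd Gaussian moments is what does that work.
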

\end{ex}

\subsection{Partition functions and concordance}\label{sec:partfunconc}

In this section we examine how quantization and partition functions interact with concordance. We will need the following general lemma. 

\begin{lem}\label{lem:conc} Morphisms of presheaves preserve concordance classes. \label{quantconc1}\end{lem}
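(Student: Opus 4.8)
The plan is to unwind the definitions on both sides until the statement becomes essentially tautological. Recall that a presheaf here means a contravariant functor from (some site of) manifolds to sets, and that $0|\delta\EFT^\bullet[-]$ is the concordance quotient of the presheaf $0|\delta\EFT^\bullet(-)$. So a morphism of presheaves $\Phi \colon \mathcal{F} \to \mathcal{F}'$ is a natural transformation, and I want to show it descends to a map $\mathcal{F}[X] \to \mathcal{F}'[X]$ on concordance classes. The only thing to verify is that $\Phi_X$ sends concordant sections to concordant sections.

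First I would recall the definition of concordance (Definition \ref{def:conc}): two sections $E_+, E_- \in \mathcal{F}(X)$ are concordant if there is $\tilde E \in \mathcal{F}(X \times \R)$ with $i_\pm^* \tilde E = \pi_\pm^* E_\pm$, where $i_\pm \colon X \times (\pm 1, \pm\infty) \hookrightarrow X \times \R$ and $\pi_\pm \colon X \times (\pm 1, \pm\infty) \to X$ are the inclusions and projections. (Here $i_\pm^*$ and $\pi_\pm^*$ are the restriction maps of the presheaf applied to these morphisms of manifolds.) Now suppose $E_+ \sim E_-$ via such a $\tilde E$. I would set $\tilde E' := \Phi_{X \times \R}(\tilde E) \in \mathcal{F}'(X \times \R)$ and claim this is a concordance between $\Phi_X(E_+)$ and $\Phi_X(E_-)$. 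Indeed, naturality of $\Phi$ applied to the maps $i_\pm$ and $\pi_\pm$ gives the commuting squares $i_\pm^* \circ \Phi_{X \times \R} = \Phi_{X \times (\pm 1, \pm\infty)} \circ i_\pm^*$ and $\pi_\pm^* \circ \Phi_X = \Phi_{X \times (\pm 1, \pm\infty)} \circ \pi_\pm^*$; chaining these together with $i_\pm^* \tilde E = \pi_\pm^* E_\pm$ yields
$$
i_\pm^* \tilde E' = i_\pm^* \Phi_{X\times\R}(\tilde E) = \Phi_{X\times(\pm1,\pm\infty)}(i_\pm^* \tilde E) = \Phi_{X\times(\pm1,\pm\infty)}(\pi_\pm^* E_\pm) = \pi_\pm^* \Phi_X(E_\pm),
$$
which is exactly the concordance condition for $\Phi_X(E_+) \sim \Phi_X(E_-)$. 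Hence $\Phi_X$ is well-defined on concordance classes, and naturality in $X$ of the descended map is inherited from naturality of $\Phi$.

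There is essentially no obstacle here — the lemma is a formal consequence of functoriality, and the only mild care needed is to note that a presheaf morphism restricts compatibly along the open inclusions $X \times (\pm 1, \pm\infty) \hookrightarrow X \times \R$ and along the projections, which is just naturality applied to those particular arrows in the site. If one wants to be scrupulous, I would also remark that the assignment $[E] \mapsto [\Phi_X(E)]$ does not depend on the choice of representative $\tilde E$ of the concordance, which is immediate since we only used its existence, not its identity; and that composition of presheaf morphisms is respected, so this construction is itself functorial on the category of presheaves. This is why the statement is phrased for general morphisms of presheaves: it will be applied both to the morphism relating a quantization map to geometric integration and, more importantly, to deduce that quantization descends to concordance classes once it is exhibited as (or compared to) a morphism of the relevant presheaves.
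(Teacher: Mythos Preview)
Your proof is correct and follows essentially the same approach as the paper: both arguments apply naturality of the presheaf morphism to the maps $i_\pm$ and $\pi_\pm$ to conclude that the image of a concordance is again a concordance. Your write-up spells out the chain of equalities a bit more explicitly, but the content is identical.
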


\begin{proof} Let $R\colon  \mathcal{E}\to \mathcal{E}'$ be a morphism of presheaves. By naturality, we have commutative diagrams,

$$
\begin{tikzpicture}[>=latex]
\node (A) at (0,0) {$\mathcal{E}(X\times \R)$};
\node (B) at (4,0) {$ \mathcal{E}'(X\times \R)$};
\node (C) at (0,-1.5) {$\mathcal{E}(X\times (\pm 1,\pm \infty))$};
\node (D) at (4,-1.5) {$\mathcal{E}'(X\times (\pm 1,\pm \infty))$};
\draw[->] (A) to node [above=1pt] {$R$} (B);
\draw[->] (A) to node [left=1pt] {$i^*_\pm$} (C);
\draw[->] (B) to node [right=1pt] {$i^*_\pm$} (D);
\draw[->] (C) to node [below=1pt] {$R$} (D);
\end{tikzpicture}
\quad 
\begin{tikzpicture}[>=latex]
\node (A) at (0,0) {$\mathcal{E}(X)$};
\node (B) at (4,0) {$ \mathcal{E}'(X)$};
\node (C) at (0,-1.5) {$\mathcal{E}(X\times (\pm 1,\pm \infty))$};
\node (D) at (4,-1.5) {$\mathcal{E}'(X\times (\pm 1,\pm \infty))$};
\draw[->] (A) to node [above=1pt] {$R$} (B);
\draw[->] (A) to node [left=1pt] {$\pi^*_\pm$} (C);
\draw[->] (B) to node [right=1pt] {$\pi^*_\pm$} (D);
\draw[->] (C) to node [below=1pt] {$R$} (D);
\end{tikzpicture}
$$
so that a concordance $\tilde{E}$ between sections $E_+,E_-\in \mathcal{E}(X)$ maps to a concordance $R(\tilde{E})$ between section $R(E_+)$ and $R(E_-)$ in $\mathcal{E}'(X)$. \ep

\begin{lem} Concordant geometries produce concordant partition functions, i.e., $[g]=[g']\in \mathcal{G}_X[\pt]$ implies that $[Z_X(g)]=[Z_X({g'})]\in d|\delta\EFT[\pt]$. \label{lem:conc0} \end{lem}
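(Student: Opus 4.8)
The plan is to deduce the lemma from Lemma~\ref{quantconc1} by presenting $g\mapsto Z_X(g)$ as (the value on a canonical section of) a morphism of presheaves on ${\sf Man}$. Assume $X$ is closed of dimension $n$. For a manifold $Y$, a section of $\mathcal{G}_X$ over $Y$ is by definition an element of $\mathcal{G}(p_Y\colon X\times Y\to Y)$, i.e.\ a $\mathcal{G}$-orientation of the proper submersion $p_Y$ (proper since $X$ is closed), so quantization along $p_Y$ furnishes a linear map $(p_Y)_!(g_Y)\colon 0|\delta\EFT^\bullet_{\rm pol}(X\times Y)\to 0|\delta\EFT^{\bullet-n}_{\rm pol}(Y)$. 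Evaluating on the unit field theory $1\in 0|\delta\EFT^0_{\rm pol}(X\times Y)$ and letting $Y$ vary, I would define
$$
\Phi\colon \mathcal{G}_X\longrightarrow 0|\delta\EFT^{-n}_{\rm pol}(-),\qquad \Phi_Y(g_Y):=(p_Y)_!(g_Y)(1).
$$
By construction $\Phi_{\pt}=Z_X(-)$, so once $\Phi$ is known to be a morphism of presheaves, Lemma~\ref{quantconc1} gives at once that $[g]=[g']\in\mathcal{G}_X[\pt]$ forces $[Z_X(g)]=[Z_X(g')]\in 0|\delta\EFT^{-n}_{\rm pol}[\pt]$ (the concordance relation on $\mathcal{G}_X$ being that of Definition~\ref{def:conc} applied to this presheaf).

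The step that requires real input is the naturality of $\Phi$: for $f\colon Y'\to Y$ one must check $f^*\circ\Phi_Y=\Phi_{Y'}\circ f^*$. Writing $F:=\id_X\times f\colon X\times Y'\to X\times Y$, the square formed by $p_Y,p_{Y'},f,F$ is cartesian and $f^*g_Y$ is the pulled-back $\mathcal{G}$-orientation of $p_{Y'}$, so naturality of $\Phi$ amounts exactly to the base-change identity $f^*\circ(p_Y)_!(g_Y)=(p_{Y'})_!(f^*g_Y)\circ F^*$ for quantization, together with the fact that $F^*$ takes the unit field theory to the unit field theory. I would either incorporate this base-change compatibility into the axioms for a quantization or verify it by hand for the $0|2$-sigma model of Theorem~\ref{02push}: there $(p_Y)_!$ is fiberwise Berezinian integration against $\exp(-\mathcal{S})\mathcal{D}\Phi$ over the family of $\underline{\sf SM}(\R^{0|2},X)$-bundles over $Y$, so pulling back along $f$ only reparametrizes the base while the metric and potential pull back fiberwise, and the integral being computed fiberwise the two sides agree. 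One also checks that $f^*$ commutes with the renormalization group action, hence preserves polynomial degree, so all the maps above respect the $_{\rm pol}$ subspaces.

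Alternatively — and perhaps more transparently — the concordance can be written down directly: given a concordance of geometries $\tilde g\in\mathcal{G}_X(\R)$ from $g$ to $g'$, set $\tilde Z:=p_!(\tilde g)(1)\in 0|\delta\EFT^{-n}_{\rm pol}(\R)$ for $p\colon X\times\R\to\R$. Restricting along $i_\pm\colon(\pm1,\pm\infty)\hookrightarrow\R$ and using the defining property $i_\pm^*\tilde g=\pi_\pm^*g_\pm$ of a concordance of geometries (with $g_+=g$, $g_-=g'$), two applications of base change identify $i_\pm^*\tilde Z$ with $\pi_\pm^*$ of the quantization of $(X\to\pt,g_\pm)$ evaluated on $1$, i.e.\ with $\pi_\pm^*Z_X(g_\pm)$; thus $\tilde Z$ witnesses $[Z_X(g)]=[Z_X(g')]$. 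Either way the sole nonformal ingredient is the base-change property of quantization, a Fubini-type compatibility of the fiberwise Berezinian integral, and I expect establishing it to be the main obstacle.
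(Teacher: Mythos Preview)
Your proposal is correct and follows essentially the same route as the paper: view the assignment $g\mapsto Z_X(g)$ as (arising from) a morphism of presheaves on manifolds and invoke Lemma~\ref{quantconc1}. The paper packages this as the morphism $\mathcal{G}_X(-)\times 0|\delta\EFT_{\rm pol}(X\times -)\to 0|\delta\EFT_{\rm pol}(-)$ and evaluates at $(g,1)$, whereas you precompose with the constant section $1$ first; these are equivalent.

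One point worth noting: you are more careful than the paper here. You correctly flag that naturality of $\Phi$ in $Y$ requires a base-change identity $f^*\circ (p_Y)_!(g_Y)=(p_{Y'})_!(f^*g_Y)\circ F^*$, and you observe that Definition~\ref{def:quant} as stated does not explicitly impose this. The paper simply asserts that quantization ``can be considered as a morphism of presheaves in the variable $Y$'' without further comment. Your suggestion---either build base change into the axioms or verify it directly for the $0|2$-sigma model via the fiberwise nature of the Berezinian integral---is exactly the right way to close this, and your alternative direct construction of the concordance $\tilde Z=p_!(\tilde g)(1)$ makes the dependence on base change fully explicit.
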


\bp If we restrict Definition \ref{def:quant} to the submersions $p\colon X\times Y\to Y$, we can consider quantization as a morphism of presheaves in the variable $Y$,
$$
p_!\colon \mathcal{G}_X(Y)\times 0|\delta\widetilde{\EFT}(X\times Y)\to 0|\delta\EFT(Y).
$$ 
Setting $Y=\pt$ and applying Lemma \ref{quantconc1} to the image of $(g,1)$ under the morphism
\beq\begin{array}{ccccc}
\mathcal{G}_X(\pt)\times 0|\delta\EFT_{\rm pol}^0(X)&\stackrel{Q}{\longrightarrow}& 0|\delta\EFT_{\rm pol}^{-n}(\pt)\subset \R, \\
\downin && \downin  \\
(g,1) & \mapsto & Z_X^{0|\delta}(g) 
\end{array}\nonumber
\eeq
the result follows. 
\ep

\begin{prop} If $[g]=[g']$, there is an equality of partition functions $Z_X(g)=Z_X(g')$. \label{partfunprop} \end{prop}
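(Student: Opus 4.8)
The plan is to combine Lemma~\ref{lem:conc0} with the observation that concordance classes of partition functions in $0|\delta\EFT^\bullet[\pt]$ carry no information beyond the number itself, since $\pt$ is contractible. More precisely, Lemma~\ref{lem:conc0} gives $[Z_X(g)]=[Z_X(g')]\in 0|\delta\EFT^{-n}[\pt]$ whenever $[g]=[g']\in \mathcal{G}_X[\pt]$. The partition function lives in $0|\delta\EFT^{-n}_{\rm pol}(\pt)\subset\R$, i.e.\ it is literally a real number; so the content to extract is that the concordance relation on $0|\delta\EFT^\bullet(\pt)\subset\R$ is trivial, i.e.\ that two numbers that are concordant are in fact equal.

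First I would unwind what concordance means on a point. A concordance between $Z_X(g),Z_X(g')\in 0|\delta\EFT^{-n}(\pt)$ is an element $\tilde Z\in 0|\delta\EFT^{-n}(\R)$ restricting to (the pullbacks of) $Z_X(g)$ and $Z_X(g')$ on the two ends $(\pm 1,\pm\infty)$. Now I would invoke the identification of twisted $0|\delta$-EFTs with equivariant functions on $\SM(\R^{0|\delta},\R)$; since a function on a supermanifold is locally constant in directions where the reduced manifold is connected, and since the maps $i_\pm$ are restrictions to connected open subsets of $\R$, the number $\tilde Z$ restricts to, on $(1,\infty)$ and on $(-1,-\infty)$, must in fact be the same real number once we know $\tilde Z$ is a \emph{single} field theory over the connected manifold $\R$. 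Concretely: a field theory over $\R$ is a function on $\SM(\R^{0|\delta},\R)$, whose reduced manifold is connected (it is built from $(\pi T)^\delta\R$, which has connected reduced space $\R$), equivariant for $\Euc(\R^{0|\delta})$; the only such functions landing in $0|\delta\EFT^{-n}$, being pulled back from the point under $\R\to\pt$ up to concordance, take a constant value, and restriction to either end returns that constant. Hence $Z_X(g)=\tilde Z|_{+}=\tilde Z|_{-}=Z_X(g')$.

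Alternatively, and perhaps more cleanly, I would argue purely formally: $0|\delta\EFT^\bullet[-]$ is a homotopy-invariant contravariant functor to graded rings (as stated in the excerpt), so $0|\delta\EFT^\bullet[\pt]$ is a ring and the two inclusions $(\pm 1,\pm\infty)\hookrightarrow\R$ are homotopy equivalences inducing, after passing to concordance classes, the identity; thus the natural map $0|\delta\EFT^\bullet(\pt)\to 0|\delta\EFT^\bullet[\pt]$ composed with either end-restriction is injective on the image of $Z_X(g)$. Since $0|\delta\EFT^{-n}(\pt)\subset\R$ and the concordance quotient at a point is computed against a contractible base, the quotient map is injective on these numbers, so $[Z_X(g)]=[Z_X(g')]$ forces $Z_X(g)=Z_X(g')$. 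Combining this with Lemma~\ref{lem:conc0} completes the proof.

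The main obstacle I anticipate is making precise the claim that the concordance relation on $0|\delta\EFT^\bullet(\pt)\subset\R$ is trivial — i.e.\ that one genuinely recovers \emph{equality} of numbers and not merely equality of concordance classes in some larger group. This requires knowing that $0|\delta\EFT^\bullet(\pt)$ injects into its own concordance classes, which ultimately rests on the fact that a $0|\delta$-field theory over the connected manifold $\R$ that is pulled back (in the relevant twisted/graded degree) has constant ``value'' along $\R$; this is where one must use connectedness of the reduced manifold of $\SM(\R^{0|\delta},\R)$ together with the explicit description of the relevant degree-$(-n)$ piece of $0|\delta\EFT^\bullet$ as (equivariant) \emph{functions}, not higher-form data. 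Once that point is pinned down the rest is a direct application of Lemma~\ref{lem:conc0}.
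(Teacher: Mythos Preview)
Your overall structure is correct: reduce to Lemma~\ref{lem:conc0} plus the statement that concordant elements of $0|\delta\EFT^{-n}_{\rm pol}(\pt)\subset\R$ are in fact equal. That second step is exactly Proposition~\ref{cor:concpoint} in the paper, and the paper's proof of Proposition~\ref{partfunprop} literally consists of invoking it. So at the level of strategy you and the paper agree.

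The gap is in how you justify that second step. Your connectedness argument is not correct as written: it is simply false that ``a function on a supermanifold is locally constant in directions where the reduced manifold is connected'' --- already $C^\infty(\R)$ contains nonconstant functions, and the reduced manifold of $\SM(\R^{0|\delta},\R)$ is $\R^{2^{\delta-1}}$ for $\delta\ge 1$, so a bare connectedness argument on the concordance $\tilde Z\in 0|\delta\EFT^{-n}(\R)$ buys nothing. What actually forces rigidity is the $\Euc(\R^{0|\delta})$-equivariance (in particular closedness under all the $d_i$), and you never use it. Your ``purely formal'' alternative is circular: homotopy invariance of $0|\delta\EFT^\bullet[-]$ tells you about the concordance \emph{quotient}, not that the quotient map $0|\delta\EFT^\bullet(\pt)\to 0|\delta\EFT^\bullet[\pt]$ is injective, which is precisely the point at issue.

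The paper closes this gap by a different route. It first proves an algebraic characterization of concordance (Proposition~\ref{concord}): two twisted $0|\delta$-EFTs are concordant iff their difference is $\Delta e$ for some $e$, where $\Delta=d_\delta\cdots d_1$. This is a genuine lemma (a generalization of part of Stokes' theorem, using the Cartan-type identity $[d_\delta,\dots,[d_1,I_{\partial_\lambda}]\dots]=\mathcal L_{\partial_\lambda}$). Proposition~\ref{cor:concpoint} then follows in one line: on $\SM(\R^{0|\delta},\pt)\cong\pt$ every $d_i$ acts trivially, so $\Delta e=0$ and concordant means equal. That is the missing idea in your proposal.
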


\begin{proof} 
The Proposition will follow from a result proved in Section \ref{sec:delta}. 

\begin{prop} For $\delta>0$, elements of $0|\delta\EFT_{\rm pol}^k(\pt)\subset \R$ are concordant if and only if they are equal as functions on $\SM(\R^{0|\delta},\pt)\cong \pt$. \label{cor:concpoint} \end{prop}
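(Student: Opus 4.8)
The plan is to unwind the definitions of both sides until the statement becomes a computation about polynomial functions on a point. By Proposition~\ref{intprop}-style reasoning (or directly from the definitions in Section~\ref{sec:pFTdef}), the space $\SM(\R^{0|\delta},\pt)$ is just the point $\pt$, and $0|\delta\EFT_{\rm pol}^k(\pt)$ is the $k$-th RG-eigenspace inside the $\Euc(\R^{0|\delta})$-equivariant functions on $\pt$, hence a subspace of $\R$ (or of $\R^{\odd}$ when the twist involves a parity shift, in which case both sides are $0$ and there is nothing to prove). So the forward direction is trivial: equal functions are certainly concordant, via the constant concordance $\tilde E = \pi^* E \in 0|\delta\EFT_{\rm pol}^k(\pt\times\R)$.

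The content is the converse. First I would describe $0|\delta\EFT_{\rm pol}^k(\pt\times\R) \cong 0|\delta\EFT_{\rm pol}^k(\R)$ explicitly: since $\SM(\R^{0|\delta},\R)\cong (\pi T)^\delta\R$, its functions are polynomials in the odd coordinates $\theta_i$ (there are $\delta$ of them, more precisely $2^\delta$ odd generators when $\delta\geq 1$... I should be careful and just say: functions generated over $C^\infty(\R)$ by the odd coordinates coming from the iterated $\pi T$ construction) with coefficients in $C^\infty(\R)$. The $\Euc(\R^{0|\delta})$-equivariance condition and the RG-degree-$k$ condition together pin down the allowed monomials in the $\theta$'s up to an overall $C^\infty(\R)$-valued coefficient function; the key point is that this coefficient is an \emph{arbitrary} smooth function $f(t)$ on $\R$, with no closedness or rigidity constraint (in contrast to the $0|1$ case where equivariance forced $d$-closedness). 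Then the restriction maps $i_\pm^*$ and $\pi_\pm^*$ of Definition~\ref{def:conc} act on the coefficient function by restriction to $(\pm 1,\pm\infty)$ and by the constant extension, respectively.

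Given that, a concordance between $E_+,E_-\in 0|\delta\EFT_{\rm pol}^k(\pt)$ is exactly a single smooth function $f\colon\R\to\R$ whose restriction to $(1,\infty)$ is the constant $E_+$ and whose restriction to $(-\infty,-1)$ is the constant $E_-$ — but a smooth function that is \emph{locally constant} on two rays need not be globally constant, so this alone does \emph{not} force $E_+=E_-$. The resolution must come from the polynomial/renormalizability hypothesis: $\tilde E$ must lie in $0|\delta\EFT_{\rm pol}^k(\pt\times\R)$, meaning it is an RG-eigenvector of eigenvalue $k$ for the dilation action on $\R^{0|\delta}$, which does \emph{not} touch the $\R$-coordinate, so that is not yet enough either. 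Hence the hard part — and the step I expect to be the main obstacle — is identifying the precise extra constraint. I expect it is this: renormalizability plus equivariance forces the coefficient function to be itself polynomial \emph{in the $\R$-direction as well} (this is presumably what Section~\ref{sec:delta} establishes — that the only RG-eigenfunctions of positive integral degree on $\SM(\R^{0|\delta},\R)$ have coefficients that are polynomials in $t$, or perhaps that $0|\delta\EFT_{\rm pol}^k$ sheafifies to a \emph{locally constant} sheaf whose sections on $\R$ are literally constants). A polynomial in $t$ that is eventually constant on $(1,\infty)$ is globally constant, and equals its value on $(-\infty,-1)$, giving $E_+=E_-$. So the proof reduces to: (i) compute $0|\delta\EFT_{\rm pol}^k(\R)$ as (polynomials in $t$)$\,\otimes\,$(a fixed odd monomial), citing the structural result of Section~\ref{sec:delta}; (ii) observe a polynomial constant on a ray is constant; (iii) match up the two rays. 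I would flag step (i) as the one requiring the genuine input, and would phrase it as a direct appeal to the classification of renormalizable $0|\delta$-EFTs over $\R$ proved there.
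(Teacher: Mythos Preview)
Your approach has a genuine gap, and it stems from misidentifying where the rigidity comes from.

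First, your claim that for $\delta>1$ the $\Euc(\R^{0|\delta})$-equivariance leaves the coefficient function $f(t)$ unconstrained is wrong. Invariance under the odd translations $d_i$ is still a closedness condition. For instance when $\delta=2$ and $X=\R$, an even invariant function has the form $F(t,d_2d_1t)+G(t,d_2d_1t)\,d_1t\,d_2t$, and imposing $d_1\omega=0$ gives $F_t+(d_2d_1t)\cdot G=0$; in particular $F(t,0)$ is forced to be constant. So the equivariance \emph{does} rigidify the coefficients, just as in the $0|1$ case --- you dismissed this too quickly.

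Second, your attempted repair via renormalizability cannot work: the RG action dilates $\R^{0|\delta}$, not the target $\R$, so being an RG-eigenvector constrains the polynomial degree in the $d_It$ variables but says nothing whatsoever about the $t$-dependence. The hoped-for statement ``renormalizable forces polynomial in $t$'' is false, and Section~\ref{sec:delta} contains no such classification of $0|\delta\EFT_{\rm pol}^k(\R)$.

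The paper's proof avoids all of this by going through Proposition~\ref{concord}, which is the actual content of Section~\ref{sec:delta}: two field theories are concordant if and only if they differ by $\Delta e$ for some $e$. Over $X=\pt$ one has $C^\infty(\SM(\R^{0|\delta},\pt))=\R$, on which every $d_i$ (being a derivation of $\R$) vanishes, so $\Delta=d_\delta\cdots d_1$ is identically zero. Hence $E_+-E_-=\Delta e=0$. That is the entire argument; you never need to look at field theories over $\R$ at all.
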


To finish the proof of Proposition \ref{partfunprop}, we use the above result to identify a function on $\SM(\R^{0|\delta},\pt)$ with the concordance class it represents, finding that in particular $Z_X^{0|\delta}(g)=[Z_X^{0|\delta}(g)]=[Z_X^{0|\delta}(g')]=Z_X^{0|\delta}(g')$ as real numbers, proving Proposition \ref{partfunprop}. \ep

\begin{prop} If a quantization of $0|\delta$-EFTs uses the sheaf $\mathcal{G}={\sf Riem}\times C^\infty$ defined in Examples \ref{ex:Riem} and \ref{ex:Cinfty}, then $[Z_X^{0|\delta}(g)]$ is independent of $g$, i.e., independent of the choice of metric and smooth function on $X$.\label{prop:indep}\end{prop}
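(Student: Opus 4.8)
The plan is to reduce this to the concordance invariance of geometric structures established in the previous lemmas, together with the fact that the space of Riemannian metrics and smooth functions on a fixed $X$ is contractible. Concretely, for a fixed closed manifold $X$, let $g_0$ and $g_1$ be two ${\sf Riem}\times C^\infty$-structures on $X$, i.e., two pairs consisting of a metric and a smooth function. I would first observe that the sheaf $\mathcal{G}_X$ on manifolds, with $\mathcal{G}_X(Y) = ({\sf Riem}\times C^\infty)(X\times Y\to Y)$, sends $Y\mapsto$ fiberwise metrics and fiberwise functions on $X\times Y$; such a datum over $Y=\R$ is literally a smooth path of metrics together with a smooth path of functions on $X$. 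Since the cone of positive-definite symmetric forms is convex and $C^\infty(X)$ is a vector space, the straight-line interpolation $t\mapsto ((1-t)g_0 + t g_1,\ (1-t)h_0 + t h_1)$, suitably reparametrized so that it is constant near $t\le -1$ and near $t\ge 1$, furnishes an element $\tilde{g}\in \mathcal{G}_X(\R)$ with $i_\pm^*\tilde g = \pi_\pm^* g_\pm$. Hence $[g_0] = [g_1]$ in $\mathcal{G}_X[\pt]$.

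Next I would invoke the structural results already assembled: by Lemma~\ref{lem:conc0}, concordant geometries produce concordant partition functions, so $[Z_X^{0|\delta}(g_0)] = [Z_X^{0|\delta}(g_1)]$ in $0|\delta\EFT[\pt]$. But by Proposition~\ref{cor:concpoint}, for $\delta>0$ two elements of $0|\delta\EFT_{\rm pol}^k(\pt)\subset\R$ are concordant precisely when they are equal as real numbers. Combining these, $Z_X^{0|\delta}(g_0) = Z_X^{0|\delta}(g_1)$ as numbers. Since $g_0$ and $g_1$ were arbitrary, $Z_X^{0|\delta}(g)$ — and a fortiori its concordance class $[Z_X^{0|\delta}(g)]$ — is independent of the choice of metric and smooth function. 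For the statement as phrased (independence of $[Z_X^{0|\delta}(g)]$), Lemma~\ref{lem:conc0} alone already suffices and Proposition~\ref{cor:concpoint} is only needed for the sharper numerical conclusion recorded in Proposition~\ref{partfunprop}.

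The one genuine point requiring care — the step I expect to be the main obstacle — is checking that the convex interpolation actually lives in the sheaf $\mathcal{G}_X$ with the correct behavior on the collars $(\pm1,\pm\infty)$, rather than merely giving a homotopy. This is where the reparametrization matters: one must choose a smooth function $\beta\colon\R\to[0,1]$ that is identically $0$ on $(-\infty,-1)$ and identically $1$ on $(1,\infty)$, and set $\tilde g$ to be the metric-plus-function on $X\times\R$ whose value at $(x,t)$ is $((1-\beta(t))g_0 + \beta(t)g_1,\ (1-\beta(t))h_0 + \beta(t)h_1)$; positive-definiteness is preserved pointwise because the positive cone is convex, and the collar conditions $i_\pm^*\tilde g = \pi_\pm^* g_\pm$ hold by construction. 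Everything else is a formal chaining of the already-proven lemmas, so no further calculation is needed.

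\begin{proof}
Fix a closed manifold $X$ and let $g_0,g_1$ be two ${\sf Riem}\times C^\infty$-structures on $X$, i.e., pairs $(g_i,h_i)$ of a Riemannian metric and a smooth function on $X$. Choose a smooth function $\beta\colon\R\to[0,1]$ with $\beta\equiv 0$ on $(-\infty,-1)$ and $\beta\equiv 1$ on $(1,\infty)$. Define a section $\tilde g\in\mathcal{G}_X(\R)=({\sf Riem}\times C^\infty)(X\times\R\to\R)$ by assigning to the fiber over $t\in\R$ the metric $(1-\beta(t))g_0+\beta(t)g_1$ together with the function $(1-\beta(t))h_0+\beta(t)h_1$ on $X$. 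Since the cone of positive-definite symmetric bilinear forms on each tangent space is convex, the interpolated metric is positive-definite for all $t$, so $\tilde g$ is a well-defined fiberwise metric-and-function, i.e., an element of $\mathcal{G}_X(\R)$. By the collar conditions on $\beta$ we have $i_\pm^*\tilde g=\pi_\pm^*g_\pm$, so $\tilde g$ is a concordance from $g_0$ to $g_1$; hence $[g_0]=[g_1]\in\mathcal{G}_X[\pt]$.

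By Lemma~\ref{lem:conc0}, concordant geometries produce concordant partition functions, so $[Z_X^{0|\delta}(g_0)]=[Z_X^{0|\delta}(g_1)]$ in $0|\delta\EFT[\pt]$. Since $g_0,g_1$ were arbitrary, $[Z_X^{0|\delta}(g)]$ is independent of the choice of metric and smooth function on $X$.
\end{proof}
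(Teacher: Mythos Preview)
Your proof is correct and follows essentially the same route as the paper's: both arguments construct an explicit concordance between any two $(g_i,h_i)$ using a $1$-parameter family that is constant on the collars, conclude that $\mathcal{G}_X[\pt]$ is a single point, and then invoke Lemma~\ref{lem:conc0}. Your version is in fact slightly more explicit---you use the straight-line interpolation and justify positive-definiteness via convexity of the cone of inner products---whereas the paper just asserts the existence of a smooth $1$-parameter family; but the content is the same.
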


\bp This statement basically amounts to the contractibility of the space of metrics and smooth functions. We will construct concordances explicitly. First, let $g_\lambda$ be a smooth 1-parameter family of metrics connecting $g_0$ and $g_1$ that is constant on $g_1$ for $\lambda\ge 1$ and constant on $g_0$ for $\lambda\le -1$. Put the metric $\tilde{g}:=g_\lambda\otimes d\lambda^2$ on $X\times\R$, where $\lambda$ is identified with a global coordinate on $\R$. Similarly, let $h_\lambda$ be a 1-parameter family of functions on~$X$ connecting $h_0$ and $h_1$, which we can then promote to a function $\tilde{h}$ on $X\times \R$, $\tilde{h}(x,\lambda)=h_\lambda(x)$. Together, this shows that $\mathcal{G}_X[\pt]=\pt$, and the Proposition follows. \ep
 
 \begin{proof}[Proof of Corollary \ref{thm3}] By the previous Proposition and Proposition \ref{partfunprop}, the partition function $Z_X(g,h)$ equals the number representing its concordance class, denoted $[Z_X(g,h)]$, which is independent of both $g$ and $h$. 
 \ep
 
\subsection{Notation and conventions}\label{sec:not}
We write ${\sf SM}$ for the category of supermanifolds, and refer the reader to \cite{strings1,STsuper} for preliminaries. To be very brief, objects in this category are locally ringed spaces, $M^{n|m}=(|M|^n,C^\infty)$, where $C^\infty$ is a sheaf of real superalgebras locally isomorphic to $C^\infty(\R^n)\otimes \Lambda^\bullet(\R^m)^*$. We write $|M|$ for the smooth $n$-manifold $(|M|,C^\infty/{\rm nilpotents})$, called the \emph{reduced manifold} of $M$. Since smooth manifolds admit partitions of unity, morphisms of supermanifolds are determined by the induced map on global sections of the sheaf $C^\infty$, whence the slogan ``supermanifolds are affine." We will use this fact without comment throughout. 

Let ${\sf SM}(M,N)$ denote the {\it set} of maps between supermanifolds $M$ and $N$, and $\underline{\sf SM}(M,N)$ the inner hom, i.e.,  the functor 
$$
\underline{\sf SM}(M,N)\colon  {\sf SM}^{\op}\to {\sf SET}, \quad S\mapsto {\sf SM}(S\times M,N).
$$
Similarly, we define $\underline{\sf Diff}(M)$ as the functor
\beq
\underline{\rm Diff}(M)(S)=\left\{ 
\begin{tikzpicture}[baseline=(basepoint)];
\node (A) at (0,0) {$S\times M$};
\node (B) at (3,0) {$S\times M$};
\node (C) at (1.5,-1.5) {$S$};
\node (D) at (1.5,-.6) {$\#$};
\draw[->] (A) to node [above=1pt] {$\cong$} (B);
\draw[->] (A) to (C);
\draw[->] (B) to (C);
\path (0,-.75) coordinate (basepoint);
\end{tikzpicture}\right\}.
\eeq
The above are examples of \emph{functors of points}, which may not be representable as supermanifolds meaning there may not exist a natural isomorphism with a functor
$$
\underline{Y}\colon {\sf SM}^{\op}\to {\sf SET},\quad S\mapsto {\sf SM}(S,Y),
$$ 
where $Y$ is a supermanifold. Still, much of supermanifold theory utilizes the functor of points rather than the supermanifold itself, and a surprising amount can be done with nonrepresentable presheaves on super manifolds, which we shall call \emph{generalized supermanifolds}.\footnote{A better-behaved category consists of \emph{sheaves} on supermanifolds. However, for the purposes of this paper we will stick to the somewhat simpler category of presheaves.}

Even if a generalized supermanifold is representable, whenever we refer to a point $\Phi$ of~$M$, we will implicitly mean a map $\Phi\colon  S\to M$---although the ordinary points of a supermanifold tell us very little (namely, $|M|$) the $S$-points of $M$ tell us everything by the usual Yoneda argument. For example, in Appendix \ref{functions} we explain how functions on a supermanifold are determined by their values at $S$-points, which immediately leads us to the correct notion of functions on generalized supermanifolds. 

We will frequently use the parity reversal functor $\pi$. It has a few incarnations:
\begin{enumerate} 
\item for $A$ a (commutative) superalgebra, $\pi\colon {\sf Mod}_A\to {\sf Mod}_A$ takes a (left or right) $A$-module to the parity reversed (left or right) module; 
\item $\pi\colon {\sf SVBund}\to {\sf SVBund}$ takes a super vector bundle over a supermanifold to the parity reversed bundle; and
\item $\pi\colon {\sf SVBund}\to {\sf SM}$ takes a super vector bundle to the total space of the parity reversed bundle.
\end{enumerate}
When these distinctions matter we will be explicit. 

Throughout, unless stated otherwise, $X$ is assumed to be an ordinary closed manifold, which we will frequently view as a supermanifold, i.e., as its image under the embedding of manifolds in supermanifolds.

\subsection{Outline of the paper}

The next section is the technical heart of the paper, where we define $0|\delta$-EFTs and the homotopy-invariant functor from manifolds to graded algebras gotten by taking concordance classes of field theories. In Section \ref{sec:push} we focus attention on quantization of $0|2$-Euclidean field theories via the Gaussian measure determined by the classical $0|2$-sigma model. In Section \ref{sec:CGB}, we combine the previous results to supply the details in our proof of the Chern-Gauss-Bonnet theorem. 

\subsection{Acknowledgements} 

It is a pleasure to thank Dmitri Pavlov and Stephan Stolz for many useful suggestions, and my advisor Peter Teichner for his insight and support.

\section{$0|\delta$-EFTs and Concordance}\label{sec:cocycle}

In this section we provide an explicit description of $0|\delta\EFT_{\rm pol}^\bullet(X)$ through functor-of-points computations in Sections \ref{sec:funcs} and \ref{sec:grpact}. Similar computations are carried out by Kochan and {\v S}evera in \cite{gorms}. Then we give an algebraic characterization of when $0|\delta$-Euclidean field theories are concordant in Section \ref{sec:delta}, which may be viewed as a generalization of a piece of Stokes Theorem. 

\subsection{$\SM(\R^{0|\delta},X)$ and its functions}\label{sec:funcs}

Using the argument reviewed in Appendix \ref{functions}, we identify an element of $C^\infty(\SM(\R^{0|\delta},X))$ with maps of sets $\SM(\R^{0|\delta},X)(S)\to C^\infty(S)$ natural in $S$. 

We choose coordinates $\{\theta_1,\dots,\theta_\delta\}$ on $\R^{0|\delta}$, which gives isomorphisms natural in $S$, $C^\infty(S\times \R^{0|\delta})\cong C^\infty(S)[\theta_1,\dots,\theta_\delta],$ with $\theta_i$ odd. A map $\Phi$ of supermanifolds is determined by a map $\Phi^*\colon  C^\infty X\to C^\infty(S\times \R^{0|\delta})$ of superalgebras. We can  express $\Phi^*$ in terms of its Taylor components, 
$$
\Phi^*=f+\sum_I \phi_I\theta_I , 
$$
where $I=\{i_1<\dots < i_k\}$ is a nonempty increasing subset of $\{1,\dots,\delta\}$, $\theta_I=\theta_{i_1} \cdots \theta_{i_k}$, and $f,\phi_I\colon  C^\infty(X)\to C^\infty(S)$ are linear maps with restrictions that make $\Phi^*$ an algebra homomorphism. Notice that $f$ induces a map of supermanifolds $S\times \pt\to X$.

Given any $x\in C^\infty X$, we define a function also denoted $x\in C^\infty(\SM(\R^{0|\delta},X))$ whose value at an $S$-point $\Phi$ is $x(\Phi)=f(x).$ For a map $s\colon  S'\to S$, the value of the function $x$ at the $S'$-point is $x(s\circ \Phi)=(s\circ f)(x)\in C^\infty(S')$, so that $x$ is indeed natural in $S$ and therefore defines an honest function on $\SM(\R^{0|\delta},X)$. This gives an inclusion of algebras 
\beq
C^\infty X\hookrightarrow C^\infty(\SM(\R^{0|\delta},X)).\label{eq:funinclude}
\eeq
Other examples of functions are denoted by $d_Ix$ for $x\in C^\infty X$, whose value at an $S$-point is defined as $(d_Ix)(\Phi):=\phi_I(x).$ We note that the dilation action of $\R^\times$ on $\R^{0|\delta}$ is through a dilation action on the coordinates $\{\theta_i\}$, and induces an action on $d_Ix$ by $r^{|I|}$ for $r\in \R^\times$. Hence, $d_Ix$ has polynomial degree $|I|$ in the sense of Definition \ref{def:pol}.

We can form arbitrary smooth functions in the variables $d_Ix$, in the sense that if $\{x^j\}$ are local coordinates on $X$, $\{d_Ix^j\}$ are local coordinates on $\underline{\sf SM}(\R^{0|\delta},X)$. By the usual sheaf property for functions on a supermanifold, this proves the following. 

\begin{prop} The algebra $C^\infty(\underline{\sf SM}(\R^{0|\delta},X))$ is generated by smooth functions in $d_Ix$ for $x\in C^\infty X$ and $I$ varying over all multi-indices $\{i_1,\dots,i_k\}$. The super algebra $C^\infty_{\rm pol}(\underline{\sf SM}(\R^{0|\delta},X))$ is  freely generated by the variables $d_Ix$ for $x\in C^\infty X$. \label{funs} \end{prop}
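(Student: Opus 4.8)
The plan is to reduce everything to a local computation in a coordinate chart, using the functor-of-points description set up just above. Fix a chart $U\subseteq X$ with coordinates $x^1,\dots,x^n$. For any supermanifold $S$, a map $\Phi\colon S\times\R^{0|\delta}\to U$ is the same data as an $n$-tuple of even elements of $C^\infty(S\times\R^{0|\delta})\cong C^\infty(S)[\theta_1,\dots,\theta_\delta]$ whose reduced parts take values in $U$, namely the $\Phi^*(x^j)$; expanding $\Phi^*(x^j)=\sum_{I}d_Ix^j(\Phi)\,\theta_I$ as in the paragraph preceding the Proposition exhibits $\Phi$ as uniquely and freely determined by the functions $d_Ix^j(\Phi)\in C^\infty(S)$, subject only to $d_Ix^j(\Phi)$ having parity $|I|\bmod 2$ and the reduced part of $d_\emptyset x^j(\Phi)=x^j(\Phi)$ landing in $U$. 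This is natural in $S$, so $\underline{\sf SM}(\R^{0|\delta},U)$ is an open subspace of $\R^{n2^{\delta-1}|n2^{\delta-1}}$ with the $d_Ix^j$, $I\subseteq\{1,\dots,\delta\}$, as standard coordinates (equivalently, this is the local form of the identification $\underline{\sf SM}(\R^{0|\delta},X)\cong(\pi T)^\delta X$ obtained by iterating $\pi T$).

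Next, globalize. Cover $X$ by charts $U_\alpha$; since $\R^{0|\delta}$ has a single reduced point, every $S$-point of $\underline{\sf SM}(\R^{0|\delta},X)$ restricts over an open cover of $|S|$ to a point of some $\underline{\sf SM}(\R^{0|\delta},U_\alpha)$, so these open subfunctors cover $\underline{\sf SM}(\R^{0|\delta},X)$, which is therefore a supermanifold with an atlas by the charts above. The first claim then follows from the sheaf property for functions on supermanifolds: a function restricts on each $\underline{\sf SM}(\R^{0|\delta},U_\alpha)$ to a smooth function of the $d_Ix^j_\alpha$, and since $X$ is closed it embeds properly in some $\R^N$, so pulling back the coordinate functions shows $C^\infty(\underline{\sf SM}(\R^{0|\delta},X))$ is generated by smooth functions in the $d_Ix$ for $x\in C^\infty(X)$.

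For the second claim, combine the local picture with the fact (noted in the text) that the dilation action of $\R^\times$ on $\R^{0|\delta}$ scales $d_Ix$ by $r^{|I|}$, so $x=d_\emptyset x$ has polynomial degree $0$ while the $d_Ix$ with $I\neq\emptyset$ have strictly positive degree. On a chart, a function $f$ with $r\cdot f=r^kf$ is a smooth function of the $x^j$ and the $d_Ix^j$ ($I\neq\emptyset$) that is homogeneous of degree $k$ for the weighted $\R_{>0}$-scaling fixing the $x^j$; the elementary fact that a smooth function homogeneous of positive weighted degree for a linear $\R_{>0}$-action is a weighted-homogeneous polynomial forces $f$ to be a polynomial in the $d_Ix^j$ ($I\neq\emptyset$) with coefficients arbitrary smooth functions of the $x^j$. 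Thus locally $C^\infty_{\rm pol}(\underline{\sf SM}(\R^{0|\delta},U))\cong C^\infty(U)[\,d_Ix^j:I\neq\emptyset\,]$ with no relations among the generators, and, using the proper embedding into $\R^N$ again (under which the $d_Ix^a$ transform by the Leibniz-type formulas forced by $\Phi^*$ being an algebra map), these glue to present $C^\infty_{\rm pol}(\underline{\sf SM}(\R^{0|\delta},X))$ as the free graded-commutative $C^\infty(X)$-algebra on the $d_Ix$, $I\neq\emptyset$.

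The step I expect to be the main obstacle is the "freely generated" half of the second claim, i.e.\ ruling out unexpected relations among the $d_Ix^j$. The delicate point is that for $\delta\ge 2$ some generators with $|I|$ even --- such as the even coordinate $F^j$ of Lemma~\ref{geo} when $\delta=2$ --- are \emph{not} nilpotent, so homogeneity in those directions must be controlled via the homogeneity-implies-polynomial lemma rather than by the finiteness of Taylor expansions in odd variables, and one has to be careful that smoothness is imposed exactly along the reduced even directions $x^j$ together with these non-reduced even directions. The remaining ingredients --- the coordinate computation of $S$-points, the covering argument, and matching $d_Ix$ with the iterated-$\pi T$ coordinates --- are routine.
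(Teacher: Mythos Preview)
Your proposal is correct and follows the same approach as the paper: establish that the $d_Ix^j$ for local coordinates $x^j$ on $X$ give local coordinates on $\underline{\sf SM}(\R^{0|\delta},X)$, then invoke the sheaf property. The paper's own argument is the single sentence preceding the Proposition and does not supply the details you do---in particular it says nothing about the passage from local to global generators via a proper embedding, nor about why weighted homogeneity forces polynomiality in the non-nilpotent even directions---so your write-up is a faithful (and more careful) expansion of the same idea rather than a different route.
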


\begin{ex}  \label{ex:piTX} We now unravel the above computations in the case of differential forms. This example can be found in various guises in many places, for example \cite{strings1,kont,HKST}. We compute the $S$-points,
$$
\SM(\R^{0|1},X)(S)\cong\{ \Phi\colon  S\times\R^{0|1}\to X\}\cong \{\Phi^*\colon  C^\infty X \to C^\infty(S)\otimes C^\infty(\R^{0|1})\}.
$$
Choosing a coordinate $\theta$ on $\R^{0|1}$ gives a decomposition, 
$$
C^\infty(S)\otimes C^\infty(\R^{0|1})\cong C^\infty S\oplus C^\infty S\cdot \theta
$$
so we may express $\Phi$ in terms of the Taylor components, $\Phi^*=f+\phi\theta.$ Enforcing the condition that $\Phi^*$ be an algebra homomorphism we find $f\colon  C^\infty X\to C^\infty S$ is a grading-preserving algebra homomorphism and $\phi\colon  C^\infty X\to C^\infty S$ is a grading-reversing map that is an odd derivation with respect to $f$, 
$$
\phi(ab)=\phi(a)f(b)+(-1)^{p(a)}f(a)\phi(b),\quad a,b\in C^\infty(X).
$$ 
But this is the standard description \cite{strings1} of $\pi TX$ in terms of its $S$-points, which recovers the isomorphism $\SM(\R^{0|1},X)\cong \pi TX. $ We can define functions on this space for any $x\in C^\infty(X)$ by assigning their values on $S$-points as
$$
x(\Phi):=f(x), \quad dx(\Phi):=\phi(x).
$$
These are the zero- and one-forms in $\Omega^\bullet(X)\subset C^\infty(\pi TX) $, respectively. For $X$ an ordinary manifold, these generate $C^\infty(\underline{\sf SM}(\R^{0|1},X))$ as an algebra. The dilation action of $\R_{>0}$ on~$\R^{0|1}$ gives an $\R_{>0}$-action on $C^\infty(\underline{\sf SM}(\R^{0|1},X))$ whose eigenspaces are (homogeneous) polynomial functions in the sense of Definition \ref{def:pol}, indexed by $\N$; the $k$th eigenspace for $k\in \N$ consists of degree $k$ differential forms, $\Omega^k(X)$. 
\begin{rmk}
Following the remark on page 74 of \cite{strings1}, we can describe differential forms on supermanifolds in terms of functions on the odd tangent bundle.
\begin{prop}
Let $M$ be a supermanifold. Then there is an isomorphism of sheaves,
$$
\Omega^\bullet(M)\cong C^\infty_{\rm pol}(\underline{\sf SM}(\R^{0|1},M)),
$$
between polynomial functions on $\pi TM$ and differential forms on $M$. 
\end{prop}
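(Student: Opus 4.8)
The plan is to establish the isomorphism $\Omega^\bullet(M)\cong C^\infty_{\rm pol}(\underline{\sf SM}(\R^{0|1},M))$ by reducing to the local (coordinate chart) case and then globalizing via the sheaf property, exactly as in the case of ordinary manifolds $X$ already treated in Example~\ref{ex:piTX}. First I would recall the local model: on a super domain $U=\R^{p|q}$ with even coordinates $\{u^a\}$ and odd coordinates $\{\xi^\alpha\}$, the algebra of differential forms $\Omega^\bullet(U)$ is the free graded-commutative superalgebra over $C^\infty(U)$ on generators $\{du^a, d\xi^\alpha\}$, where $du^a$ is odd (reversing the even parity) and $d\xi^\alpha$ is even (reversing the odd parity). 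The $\Z$-grading (form degree) is the one in which each $du^a$ and each $d\xi^\alpha$ sits in degree $1$. One must be slightly careful here that $C^\infty_{\rm pol}$ uses the eigenspace decomposition under the dilation action on $\R^{0|1}$ rather than a naive polynomial notion, but Proposition~\ref{funs} already identifies $C^\infty_{\rm pol}(\underline{\sf SM}(\R^{0|1},M))$ as the free superalgebra generated by the variables $d_I x$, and for $\delta=1$ the only nonempty multi-index is $I=\{1\}$, so this is the free superalgebra on a single generator $dx$ for each $x$.

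Next I would run the functor-of-points computation verbatim from Example~\ref{ex:piTX}, but without assuming $M$ is an ordinary manifold. An $S$-point of $\underline{\sf SM}(\R^{0|1},M)$ is an algebra homomorphism $\Phi^*\colon C^\infty(M)\to C^\infty(S)\otimes C^\infty(\R^{0|1})\cong C^\infty(S)\oplus C^\infty(S)\cdot\theta$, hence a pair $(f,\phi)$ where $f\colon C^\infty(M)\to C^\infty(S)$ is an even (grading-preserving) algebra homomorphism and $\phi$ is an odd $f$-derivation. This is precisely the functor-of-points description of the total space of $\pi TM$, the parity-reversed tangent bundle of the supermanifold $M$; the only change from the manifold case is that $f$ need not be even "for free" — it is part of the data that $\Phi^*$ is a homomorphism of \emph{super}algebras, which forces $f$ even and $\phi$ odd. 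So $\underline{\sf SM}(\R^{0|1},M)\cong\pi TM$ as supermanifolds. Then the identification of functions: $x(\Phi):=f(x)$ and $dx(\Phi):=\phi(x)$ defines elements of $C^\infty(\pi TM)$, the derivation property of $\phi$ translating into the Leibniz rule $d(xy)=(dx)\,y+(-1)^{|x|}x\,(dy)$, which is exactly the defining relation in $\Omega^\bullet(M)$. This gives a map $\Omega^\bullet(M)\to C^\infty_{\rm pol}(\underline{\sf SM}(\R^{0|1},M))$.

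To see it is an isomorphism I would argue locally: over a chart $U\cong\R^{p|q}\subseteq M$, both sides are free graded-commutative superalgebras over $C^\infty(U)$ on the generators $\{du^a, d\xi^\alpha\}$ — for the right-hand side this is the content of Proposition~\ref{funs} specialized to $\delta=1$, and one checks the generators match under the map (the even coordinate $u^a$ yields the odd generator $du^a$, the odd coordinate $\xi^\alpha$ yields the even generator $d\xi^\alpha$, consistent with $\pi T$ reversing parity on fibers). The map is therefore an isomorphism on each chart, and since both $\Omega^\bullet(-)$ and $C^\infty_{\rm pol}(\underline{\sf SM}(\R^{0|1},-))$ are sheaves on $M$ and the isomorphism is natural in the chart, it glues to a global isomorphism of sheaves. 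The main obstacle, such as it is, is purely bookkeeping: getting the parity and degree conventions for $d\xi^\alpha$ (even of degree one) correctly aligned on the two sides, and confirming that the dilation-eigenspace definition of $C^\infty_{\rm pol}$ coincides with the form-degree grading on $\Omega^\bullet(M)$ even when $M$ carries odd coordinates — but this is already implicit in Proposition~\ref{funs}, which computes the eigenspaces for general $\delta$, so no genuinely new work is required beyond invoking it with $\delta=1$.
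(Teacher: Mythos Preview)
The paper does not actually prove this proposition; it is stated inside a remark that defers to page~74 of \cite{strings1} and then moves on. Your argument is sound and is the natural one: identify $\underline{\sf SM}(\R^{0|1},M)\cong\pi TM$ via the functor of points (which goes through unchanged for $M$ a supermanifold), observe that on a super chart $\R^{p|q}$ the polynomial functions on $\pi T(\R^{p|q})$ are the free graded-commutative algebra over $C^\infty(\R^{p|q})$ on the parity-reversed fiber coordinates $du^a$ (odd) and $d\xi^\alpha$ (even), match this with the definition of $\Omega^\bullet(\R^{p|q})$, and glue by the sheaf property.

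One small caveat: you lean on Proposition~\ref{funs}, but in the paper's standing conventions $X$ is an ordinary closed manifold, so as stated that proposition does not cover a supermanifold target. The repair is trivial --- the local-coordinate argument that $\{d_I x^j\}$ furnish coordinates on $\underline{\sf SM}(\R^{0|\delta},-)$ works verbatim when some of the $x^j$ are odd --- but you should spell out that one line rather than cite a result whose hypotheses you do not literally meet. Apart from that, your bookkeeping about parities and the dilation grading is correct and is exactly the point the paper's remark is flagging: the even generators $d\xi^\alpha$ are the reason one must pass to $C^\infty_{\rm pol}$ rather than all of $C^\infty(\pi TM)$.
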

\noindent We emphasize that the polynomial condition is essential for supermanifolds~$M$; for example, the algebra of smooth functions on (the total space of) $\pi T(\R^{0|1})\cong \R^{1|1}$ is~$C^\infty(\R)[\theta]$, which is much larger than the algebra of differential forms, $\Omega^\bullet(\R^{0|1})\cong \R[\theta,d\theta]$. However, for ordinary manifolds $X$, we have a natural isomorphism, $C^\infty_{\rm pol}(\underline{\sf SM}(\R^{0|1},X))\cong C^\infty(\underline{\sf SM}(\R^{0|1},X)).$
\end{rmk}
\end{ex}

We now consider our main example, when $\delta=2$, in more detail. 

\begin{ex} 
Consider the $S$-points with a choice of coordinate, 
$$
\SM(\R^{0|2},X)(S)\cong\{ \Phi^*\colon C^\infty(X)\to C^\infty(S)[\theta_1,\theta_2]\},
$$
which allows us to write Taylor components
$$
\Phi^*=f+\phi_1\theta_1+\phi_2\theta_2+E\theta_1\theta_2,
$$
where $\phi_i\colon C^\infty X\to (C^\infty S)^{\rm odd}$ and $f,E\colon C^\infty X\to (C^\infty S)^{\rm even}$. A computation shows that~$\Phi^*$ is an algebra homomorphism if and only if
\beq
f(ab)&=&f(a)f(b) \nonumber\\
\phi_i(ab)&=&\phi_i(a)f(b)-f(a)\phi_i(b) \quad i=1,2 \label{r02Spt}\\
E(ab)&=&E(a)f(b)+f(a)E(b)+\phi_1(a)\phi_2(b)+\phi_1(b)\phi_2(a).\nonumber
\eeq
so $f$ is an algebra homomorphism, $\phi_1$ and $\phi_2$ are odd derivations with respect to $f$, and $E$ satisfies the above quadratic identity.

The polynomial functions on $\SM(\R^{0|2},X)$ are generated as an algebra by
\beq
x(\Phi)=f(x),\ \ (d_1x)(\Phi)=\phi_1(x), \ \ (d_2x)(\Phi)=\phi_2(x), \ \ (d_2d_1x)(\Phi)=E(x).\label{eq:02funs}
\eeq
The function $x$ has polynomial grading $0$, $d_1x$ and $d_2x$ have polynomial grading $+1$, and $d_2d_1x$ has grading~$+2$. We remark that unlike $d_1x$ and $d_2x$, the element $d_2d_1x$ is not nilpotent, so polynomials in the above variables are a strict subset of $C^\infty(\SM(\R^{0|2},X))$ when ${\rm dim}(X)>0$. 

\end{ex}

\subsection{Group actions on $\SM(\R^{0|\delta},X)$}\label{sec:grpact}

Let $\A\in \Diff(\R^{0|\delta})(S)$ and $\Phi\in \SM(\R^{0|\delta},X)(S)$, i.e., $\A\colon  S \times \R^{0|\delta}\stackrel{\cong}{\to} S\times \R^{0|\delta}$ and $\Phi\colon  S\times \R^{0|\delta}\to X,$ where $\A$ is a map of bundles over $S$. By restricting $\A\in \Euc(\R^{0|\delta})(S)\subset \Diff(\R^{0|\delta})(S)$, we define an action on $S$-points as
$$
\begin{tikzpicture}[>=latex]
\node (A) at (0,0) {$ \SM(\R^{0|\delta},X)(S)\times \Euc(\R^{0|\delta})(S) $};
\node (B) at (6,0) {$\SM(\R^{0|\delta},X)(S)$};
\node (C) at (0,-.6) {$\Phi,\A$};
\node (D) at (6,-.6) {$\Phi\circ \A.$};
\node (E) at (2.4,0) {\empty};
\node (F) at (2.4,-.6) {\empty};
\draw[->] (E) to  (B);
\draw[|->] (F) to (D);
\end{tikzpicture}
$$
Let $\euc(\R^{0|\delta})$ denote the Lie algebra of $\Euc(\R^{0|\delta})$. The infinitesimal action of odd translations leads to odd vector fields on $\SM(\R^{0|\delta},X)$ that raise the polynomial degree of functions by $1$. For a chosen basis of $\R^{0|\delta}$, let $D_i$ denote odd vector field associated to the action by the $i$th basis vector. We have a homomorphism of Lie algebras from $\R^{0|\delta}$ into vector fields on $\SM(\R^{0|\delta},X)$, and so we also get an induced homomorphism of universal enveloping algebras from $\Sym(\R^{0|\delta})$ into differential operators on $\SM(\R^{0|\delta},X)$; here we are using that $\R^{0|\delta}$ is a superabelian Lie algebra, so its universal enveloping algebra is the (graded) symmetric algebra on its Lie algebra. We observe that $D_iD_j=-D_jD_i$. Let $D_I$ denote the differential operator obtained from the composition $D_{i_1}\cdots D_{i_k}$ for a given ordered set $I=\{i_1,\dots, i_k\}$. The following characterizes the action of $D_I$ on~$C^\infty(\SM(\R^{0|\delta},X))$. 

\begin{lem}\label{funcs}
Let $x\in C^\infty(X)\subset C^\infty(\SM(\R^{0|\delta},X))$. Then $D_Ix=d_Ix$, where the left hand side is the action of the differential operator $D_I$ on the function $x$, and the right hand side is the function $d_Ix$ defined in the previous section. \end{lem}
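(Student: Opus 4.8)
The plan is to verify the identity $D_I x = d_I x$ by first understanding the single odd vector field $D_i$ explicitly on $S$-points, and then iterating. I would begin with the case $|I| = 1$. The vector field $D_i$ is the infinitesimal generator of the action of odd translation in the $i$th coordinate direction; concretely, for $\varepsilon$ an odd parameter (i.e., working over a base $S \times \R^{0|1}$ with coordinate $\varepsilon$), the translation $\A_\varepsilon \colon \theta_j \mapsto \theta_j + \delta_{ij}\varepsilon$ acts on an $S$-point $\Phi^* = f + \sum_J \phi_J \theta_J$ by precomposition, and $D_i$ is extracted as the $\varepsilon$-linear part of $\Phi^* \circ \A_\varepsilon^*$. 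Carrying out this substitution $\theta_i \mapsto \theta_i + \varepsilon$ and collecting the coefficient of $\varepsilon$ shows that $D_i$ sends the Taylor component $\phi_J$ (for $i \notin J$) to $\pm \phi_{J \cup \{i\}}$, with the sign bookkeeping coming from moving $\varepsilon$ past the $\theta_j$'s. In particular, evaluated on the function $x \in C^\infty(X)$ whose value at $\Phi$ is $f(x)$, we get $(D_i x)(\Phi) = \phi_i(x) = (d_i x)(\Phi)$, since $f$ is the empty-multi-index component. This handles $|I|=1$.

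Next I would induct on $|I|$. Writing $I = \{i_1 < \dots < i_k\}$ and $I' = \{i_2 < \dots < i_k\}$, we have $D_I = D_{i_1} D_{I'}$ by definition, so $D_I x = D_{i_1}(d_{I'} x)$ by the inductive hypothesis. It then remains to compute $D_{i_1}$ applied to the function $d_{I'} x$, whose value at $\Phi$ is the Taylor component $\phi_{I'}(x)$. Applying the same $\theta_{i_1} \mapsto \theta_{i_1} + \varepsilon$ substitution and reading off the $\varepsilon$-linear term, the component $\phi_{I'}$ gets sent to $\pm \phi_{I' \cup \{i_1\}} = \pm \phi_I$, and one checks the sign is exactly $+1$ given that $i_1$ is the minimal element of $I$ (so no transpositions are needed to reorder $\{i_1\} \cup I'$ into increasing order). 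Hence $(D_I x)(\Phi) = \phi_I(x) = (d_I x)(\Phi)$ for all $S$, $\Phi$, which gives the equality of functions on $\SM(\R^{0|\delta},X)$.

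I would also want to double-check compatibility with the sign convention $D_i D_j = -D_j D_i$ noted just before the lemma: since $d_I x$ as defined uses the ordered multi-index $I$ with $\theta_I = \theta_{i_1}\cdots\theta_{i_k}$, the antisymmetry of the $D_i$'s matches the antisymmetry of $\theta_I$ under reordering, so the two sides transform the same way under permutations of $I$ and it suffices to prove the identity for increasing $I$, which is what the induction does.

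The main obstacle I expect is purely the sign bookkeeping: tracking the Koszul signs that arise when commuting the odd parameter $\varepsilon$ (and the odd $\theta$'s) past one another in the substitution $\Phi^* \circ \A_\varepsilon^*$, and confirming that the accumulated sign is precisely trivial when $I$ is written in increasing order. Everything else — the identification of $D_i$ with the $\varepsilon$-linear part of precomposition by a translation, and the use of the functor-of-points description of functions from Section~\ref{sec:funcs} — is routine. A clean way to organize the sign computation is to work with a single odd coordinate $\varepsilon$ at a time and to always write $\varepsilon$ immediately to the left of the monomial it multiplies, so that the reordering needed is only $\varepsilon \theta_{i_1}\cdots = -\theta_{i_1}\varepsilon\cdots$ etc., making the cancellation transparent.
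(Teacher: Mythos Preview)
Your proposal is correct and follows essentially the same approach as the paper. The only cosmetic difference is packaging: the paper describes $D_i$ directly as postcomposition of $\Phi^*$ with $\id\otimes\partial_{\theta_i}$ rather than via an auxiliary odd parameter $\varepsilon$, and then reads off $D_i(d_Jx)=d_{\{i\}\cup J}x$ in one line before iterating---but this is the same computation you are doing, and your more explicit sign bookkeeping is, if anything, a welcome addition.
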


\begin{proof} 
Consider the action of $D_i$ on $d_Ix\in C^\infty(\SM(\R^{0|\delta},X))$ in terms of the functor of points. At an $S$-point $\Phi$, $D_i$ acts as
$$
D_i\colon  C^\infty(X)\stackrel{\Phi^*}{\to} C^\infty(S)\otimes C^\infty(\R^{0|\delta})\stackrel{{\rm id}\otimes \partial_{\theta_i}}{\longrightarrow} C^\infty(S)\otimes C^\infty(\R^{0|\delta}). 
$$
where $\partial_{\theta_i}$ is the vector field on $\R^{0|\delta}$ associated with infinitesimal translations in the $\theta_i$-direction. If we express an $S$-point in terms of its Taylor expansion and consider the action of $D_i$ on the function $d_Ix$, we find
$$
\sum \phi_I\theta_I\stackrel{D_i}{\mapsto} \sum \phi_I \partial_{\theta_i}\theta_I\stackrel{d_Ix}{\mapsto} \phi_{i\bigcup I}(x)
$$
so that $D_i(d_Ix)=d_{i\cup I} x$. Given $I=\{i_1,\dots,i_k\}$, we can iterate the above action on the function $x\in C^\infty(\SM(\R^{0|\delta},X))$, finding
$$
D_I(x)(\Phi)=(d_Ix)(\Phi)=\phi_I(x),
$$
as claimed. \end{proof}

\begin{notation} Following the previous lemma, we use $d_Ix$ to denote both the function $d_Ix\in C^\infty(\SM(\R^{0|\delta},X))$ and an operator $d_I:=D_I$ acting on the functions. In particular, the action of the $i$th basis vector of $\R^{0|\delta}$ is denoted by $d_i$, and these operators have polynomial grading~$+1$. Let $\Delta$ denote the composition~$d_\delta\cdots d_1$; it has polynomial degree $+\delta$, meaning is sends functions of polynomial degree~$k$ to polynomial degree~$\delta+k$. 
\end{notation}

The previous lemma together with our characterization of $C^\infty(\SM(\R^{0|\delta},X))$ makes it possible to describe the action by the Euclidean group. The action of odd translations $\partial_{\theta_i}\in\R^{0|\delta}$ is determined by the formula $\partial_{\theta_i}\cdot (d_Ix)=(d_i d_I) x$ together with the relations $d_id_j=-d_jd_i$. To compute the action of~$O(\delta)$, observe that its action on the Lie superalgebra~$\R^{0|\delta}$ naturally extends to one on the universal enveloping algebra of~$\R^{0|\delta}$, $\Sym(\R^{0|\delta})$. Hence~$A\in O(\delta)$ acts on a function $d_Ix$ by $A(d_Ix)=(Ad_I)x.$ Concretely, this is the standard action of~$O(\delta)$ on $\Sym(\R^{0|\delta})$, which (ignoring gradings) is an exterior algebra on $\R^\delta$. 

The other action we need to understand is defined at an $S$-point by
$$
\begin{tikzpicture}[>=latex]
\node (A) at (0,0) {$ \SM(\R^{0|\delta},X)(S)\times \SM(\R^{0|\delta},\Diff(X))(S) $};
\node (B) at (6,0) {$\SM(\R^{0|\delta},X)(S)$};
\node (C) at (0,-.6) {$\Phi,\G$};
\node (D) at (6,-.6) {$\Phi\cdot \G\phantom{=\Phi\circ fi\A}$};
\node (E) at (3,0) {\empty};
\node (F) at (3,-.6) {\empty};
\draw[->] (E) to  (B);
\draw[|->] (F) to (D);
\end{tikzpicture}
$$
where we view $\G$ as an automorphism of the trivial bundle over $S\times \R^{0|\delta}$ with fiber $X$, $S\times \R^{0|\delta}\times X\stackrel{\G}{\to} S\times \R^{0|\delta}\times X,$ and can turn $\Phi$ into a section of this bundle via 
$$
\id\times \Phi\in \Gamma(S\times \R^{0|\delta},S\times \R^{0|\delta}\times X),
$$
and finally we define $\Phi\cdot \G$ as the composition
$$
S\times \R^{0|\delta}\stackrel{\id\times \Phi}{\longrightarrow} S\times \R^{0|\delta}\times X\stackrel{\G}{\to} S\times \R^{0|\delta}\times X\stackrel{p}{\to} X
$$
where $p$ is projection. We can consider the corresponding infinitesimal action at the level of the Lie algebra, $\underline{\sf SM}(\R^{0|\delta},\Gamma(TX))$. For our purposes we need only consider the action by elements denoted $\mathcal{L}_v,I_w\in \underline{\sf SM}(\R^{0|\delta},\Gamma(TX))$ defined for $v,w\in \Gamma(TX)$; these operators have Taylor components at an $S$-point of the form
$$
\mathcal{L}_v:=v,\quad I_w:=w\theta_1\dots\theta_\delta.
$$
It is straightforward to check that $\mathcal{L}_v$ has polynomial degree~0 and $I_w$ has polynomial degree~$-\delta$. As suggested by the notation, $\mathcal{L}_v$ acts by the Lie derivative, and the relevance of the operator $I_w$ comes from the formula
\beq
[d_\delta,\dots,[d_2,[d_1,I_w]]\dots]=\mathcal{L}_w,\label{eq:cartan}
\eeq
generalizing the Cartan formula. To explain the above equality, we consider the action of the left side on the function $d_Jx$.  Expanding the expression $[d_\delta,\dots,[d_2,[d_1,I_w]\dots]$, we get a sum of terms of the form $d_K I_w d_L$ for $K\cup L\cong \{ 1,\dots,n\}$ (though not necessarily as ordered sets). If $i\in J$ and $i\in L$, then $d_K I_w d_L (d_Jx)=d_K I_w(0)=0,$ using the fact that $d_i^2=0$ for all $i$. As usual, the value of $d_Jx$ at an $S$-point $\sum \phi_I\theta_I$ is $\phi_J(x)$, and we can understand the action of the operator $I_w$ on $d_Ld_Jx$ by precomposing with the action on the $S$-point. If $i\notin J$ and $i\notin L$, then $I_w(d_L(d_Jx))=0$, using the definition of $I_w$ and the fact that $\theta_i^2=0$ for all~$i$. From this it follows that any nontrivial action of $[d_\delta,\dots,[d_2,[d_1,I_w]]$ on $d_Jx$ arises from terms where $L\cup J\cong \{1,\dots,n\}$ and hence $K\cong J$, where again these isomorphisms may not preserve the ordering of these sets. In this case we compute
$$
d_KI_wd_L(d_Jx)=d_KI_w (\Delta x)=d_K(wx)=d_J(wx)=\mathcal{L}_w d_Jx,
$$
where there are possible signs we have suppressed, owing to the non-ordered isomorphisms $K\cong J$ and $L\cup J\cong \{1,\dots ,n\}$. However, these sign ambiguities exactly cancel, so that 
\beq
[d_\delta,\dots,[d_2,[d_1,I_w]]\dots]d_Jx=\mathcal{L}_w d_Jx
\eeq
Since $[d_\delta,\dots,[d_2,[d_1,I_w]]\dots]$ is a derivation (being an iterated Lie bracket of derivations) the above characterizes its action on functions via the Leibniz rule and we have proved formula~(\ref{eq:cartan}). 

\begin{rmk} The action by the Euclidean group is natural in $X$ since a map $X\to Y$ induces a $\Euc(\R^{0|\delta})$-equivariant map $\underline{\sf SM}(\R^{0|\delta},X)\to \underline{\sf SM}(\R^{0|\delta},Y)$, i.e., a morphism of Lie groupoids~$\underline{\sf SM}(\R^{0|\delta},X)\sq \Euc(\R^{0|\delta})\to \underline{\sf SM}(\R^{0|\delta},Y)\sq \Euc(\R^{0|\delta})$. This generalizes the usual naturality of the de~Rham $d$. In particular, $\Delta$ acts naturally, which will be important in the next subsection. \end{rmk}

We now explain how the above actions give rise to familiar algebraic structures on differential forms when $\delta=1$, and then we explain in detail the situation for $\delta=2$. 

\begin{ex}\label{0|1deRham} We have that $C^\infty(\underline{\sf SM}(\R^{0|1},X))\cong \Omega^\bullet(X)$; the Euclidean group in this example acts through an $\R^{0|1}$-action and a $O(1)\cong\Z/2$-action. The infinitesimal generator of $\R^{0|1}$ acts by the de~Rham $d$, and the $\Z/2$-action is by $+1$ on even forms and~$-1$ on odd forms. We get an infinitesimal action from the (infinite dimensional) Lie algebra
$$
{\rm Lie}(\underline{\sf SM}(\R^{0|1},\underline{\rm Diff}(X)))\cong \underline{\sf SM}(\R^{0|1},\Gamma(TX))\cong \Gamma(TX)\oplus \pi \Gamma(TX)
$$ 
where in the above we view $\Gamma(TX)$ as a generalized manifold whose functor of points is $C^\infty(S,\Gamma(TX))$, i.e., smooth functions with values in $\Gamma(TX)$. Then the $S$-points of the inner hom $\underline{\sf SM}(\R^{0|1},\Gamma(TX))$ can be identified with $C^\infty(S,\Gamma(TX))[\theta]$; Taylor expanding in $\theta$ we get a term in $\Gamma(TX)$ and a term in $\pi \Gamma(TX)\cong \Gamma(TX)\otimes \R^{0|1}$, which gives the second isomorphism in the above displayed equation. We claim that $v\in \Gamma(TX)$ acts by the Lie derivative, $\mathcal{L}_v$, and $\psi\in \pi \Gamma(TX)$ by interior multiplication, $\iota_{\psi}$, and these change $\N$-degrees by~0 and~$-1$, respectively. We see this by computing the composition that defines the action,
$$
C^\infty(S)[\theta]\otimes C^\infty X\stackrel{\mathcal{G}^*}{\to} C^\infty S[\theta]\otimes C^\infty X\stackrel{\Phi^*}{\to} C^\infty(S)[\theta]
$$
where $\mathcal{G}^*=v+\psi\theta$, $(v,\psi)\in \Gamma(TX)\oplus \pi \Gamma(TX)$, and $\Phi=f+\phi\theta$. Then we find on functions
$$
(\mathcal{G}^* x)(\Phi)=f(vx)=(\mathcal{L}_vx)(\Phi)\quad (\mathcal{G}^*dx)(\Phi)=\phi(vx)+ f(\psi x)=(\mathcal{L}_vdx)(\Phi)+(\iota_\psi dx)(\Phi),
$$
which follows from the action of $\mathcal{G}$ on the $S$-point, $f+\phi\theta\stackrel{\mathcal{G}^*}{\mapsto}  f\circ v+(f\circ \psi)\theta +(\phi \circ v)\theta$. Since $\mathcal{G}^*\in {\rm Lie}(\underline{\sf SM}(\R^{0|1},\Diff(X)))$ acts by derivations, the above formulas determine the action uniquely on $C^\infty(\underline{\sf SM}(\R^{0|1},X))\cong \Omega^\bullet(X)$. 
An identical (though simpler) argument as in the proof of Equation \ref{eq:cartan} proves the usual Cartan identity, $[d,\iota_V]=\mathcal{L}_V.$ 
\end{ex}

\begin{ex} Functions on $\underline{\sf SM}(\R^{0|2},X)$ are generated by the monomials in Equation~\ref{eq:02funs}. The odd translations, $\R^{0|2}$, act in the predictable way that was described before, using that $d_1d_2=-d_2d_1$. For $x\in C^\infty X$, the rotations $O(2)$ act via the usual 2-dimensional representation on the span of $d_1x,d_2x$; act trivially on $x$; and act through the determinant homomorphism on $d_2d_1x$. 

Next we wish to understand the action by
$$
\underline{\sf SM}(\R^{0|2},\Gamma(TX))\cong  \Gamma(TX)\oplus \pi \Gamma(TX)\oplus  \pi \Gamma(TX)\oplus  \Gamma(TX).
$$
As in the previous example, we consider the composition
$$
C^\infty (S)[\theta_1,\theta_2]\otimes C^\infty X\stackrel{\mathcal{G}^*}{\to} C^\infty (S)[\theta_1,\theta_2]\otimes C^\infty X\stackrel{\Phi^*}{\to} C^\infty(S)[\theta_1,\theta_2],
$$
where $\mathcal{G}^*=v+\psi_1\theta_1+\psi_2\theta_2+w\theta_1\theta_2,$ $\Phi^*=f+\phi_1\theta_1+\phi_2\theta_2+E\theta_1\theta_2,$ and $v,w\in \Gamma(TX)$, $\psi_1,\psi_2\in \pi \Gamma(TX)$. The action of $v$ is by the Lie derivative, $\mathcal{L}_v$,
$$
(\mathcal{L}_vx)(\Phi)=f(vx), \ (\mathcal{L}_vd_ix)(\Phi)=\phi_i(vx), \ (\mathcal{L}_vd_2d_1x)(\Phi)=E(vx). 
$$ 
Most of the action of $\psi_1$ and $\psi_2$ can be computed by considering inclusions $\R^{0|1}\hookrightarrow \R^{0|2}$: when restricting to the subspaces generated by $\{ x, d_1x\}$ or $\{x, d_2x\}$, we get copies of the Cartan algebra. Explicitly, we denote the action of $\psi_i$ by $\iota_{\psi_i}$, respectively and compute
$$
\iota_{\psi_1} x(\Phi)=0, \ \iota_{\psi_1} d_1x(\Phi)=(\mathcal{L}_{\psi_1} x)(\Phi), \ \iota_{\psi_1}d_2x(\Phi)=0, \ \iota_{\psi_1}d_2d_1x(\Phi)=d_2x(\Phi).
$$
Similar formulas hold for the action of ${\psi_2}$. We denote the action of $w$ by $I_w$ and compute
$$
I_w x=0, \ I_w d_1x=0, \ I_w d_2x=0, \ (I_w d_2d_1x)(\Phi)=f(wx)=(\mathcal{L}_wx)(\Phi),
$$
most of which can be deduced by the fact that $I_w$ lowers a function's $\N$-degree by $2$. Finally, following the argument proving (\ref{eq:cartan}) we note the identity $[d_2,[d_1,I_w]]=\mathcal{L}_w.$
\end{ex}

\subsection{Concordance classes of $0|\delta$-EFTs} \label{sec:delta}
The following proposition is the key to computing concordance classes.

\begin{prop} \label{concord} Let $\delta>0$.  Then two twisted $0|\delta$-dimensional Euclidean field theories over $X$ are concordant if and only if they are $\Delta$-cohomologous:
$$
[{\sf E_-}]=[{\sf E_+}] \quad \iff \quad {\sf E_+-E_-}=\Delta e,
$$ 
for $e\in C^\infty(\underline{\sf SM}(\R^{0|\delta},X))$ where $e$ satisfies equivariance properties such that $\Delta e$ is a twisted field theory. An identical statement also holds for ${\sf E}_-$ and ${\sf E}_+$ being twisted \emph{renormalizable} field theories. 
\end{prop}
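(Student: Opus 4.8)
The plan is to prove both implications using explicit manipulations with the action of the odd translations $d_i$ and the operator $\Delta = d_\delta \cdots d_1$ on functions on $\SM(\R^{0|\delta}, X)$, mirroring how one proves that a closed form is exact iff it bounds, i.e.\ the classical Poincar\'e-lemma-with-parameter argument. First I would set up the picture on $X \times \R$: writing $t$ for the coordinate on $\R$ and using Proposition \ref{funs}, a function on $\SM(\R^{0|\delta}, X\times\R)$ is a smooth function of the variables $d_I x$ (for $x$ a local coordinate on $X$) together with $d_I t$. A twist $\mathcal{L}_\rho$ on $X\times\R$ is pulled back from $X$, and the equivariance condition for a twisted field theory $\tilde E$ is an identity under $\Euc(\R^{0|\delta})$; because the $\Euc(\R^{0|\delta})$-action commutes with $\Delta$ (the remark after Lemma \ref{funcs}) and is natural in the target, $\Delta$ carries equivariant functions to equivariant functions of the appropriate twist-degree, which is what makes the statement well-posed.

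For the ``if'' direction, given ${\sf E}_+ - {\sf E}_- = \Delta e$ with $e$ suitably equivariant, I would build an explicit concordance. The natural candidate is $\tilde E := \pi^* {\sf E}_- + \Delta\!\left( (d_1 t)\, \chi(t)\, \pi^* e \right)$ on $X \times \R$ — or more robustly, interpolate: let $\beta(t)$ be a smooth function equal to $0$ for $t \le -1$ and $1$ for $t \ge 1$, and consider $\tilde E := \pi^*{\sf E}_- + \Delta\big( \beta(t)\,\pi^* e \big)$, where the extra $d$'s hitting $\beta(t)$ produce terms proportional to $\beta'(t)\, d_i t$ supported in $(-1,1)$. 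Restricting along $i_-\colon X\times(-\infty,-1)\hookrightarrow X\times\R$ kills all the $\beta$-derivative terms and the $\beta$-term itself, leaving $\pi^*{\sf E}_-$; restricting along $i_+$, one has $\beta \equiv 1$, so $\Delta(\pi^* e) = \pi^*(\Delta e) = \pi^*({\sf E}_+ - {\sf E}_-)$, giving $\pi^*{\sf E}_+$. I would check that $\tilde E$ is a $0|\delta$-EFT over $X\times\R$ (equivariance of $\Delta(\beta\,\pi^*e)$ follows since $\beta(t)$ is $\Euc$-invariant as $t$ is a degree-$0$ variable, $\pi^*e$ is equivariant, and $\Delta$ preserves equivariance) and that in the renormalizable case, $e$ can be chosen polynomial so $\beta \cdot \pi^* e$ has the right polynomial growth — but note $\beta(t)$ is bounded, not polynomial, so here I must be slightly careful: the polynomial condition concerns the $\R_{>0}$-action dilating the $\theta_i$, under which $t$ and $\beta(t)$ have degree $0$, so multiplying a polynomial-degree-$k$ element by $\beta(t)$ preserves polynomial degree, and $\Delta$ raises it by $\delta$; thus renormalizability is preserved.

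For the ``only if'' direction — the harder direction, and the main obstacle — suppose $\tilde E \in 0|\delta\EFT^k(X\times\R)$ with $i_\pm^*\tilde E = \pi_\pm^* {\sf E}_\pm$. I want to produce $e$ on $X$ with $\Delta e = {\sf E}_+ - {\sf E}_-$. The idea is the fiber-integration / homotopy-operator trick: define a ``primitive'' by integrating $\tilde E$ along the $\R$-direction against the right differential-form data. Concretely, $\tilde E$, as a function of $\{d_I x\}$ and $\{d_I t\}$, can be expanded in the odd-type generators $d_i t$ (there are $\delta$ of them, $d_1 t, \dots, d_\delta t$, plus higher $d_I t$); the equivariance under $\Euc(\R^{0|\delta})$, and in particular under the $O(\delta)$-rotations acting on the $d_i t$ and the translations relating $d_I t$ to iterated $d_i$'s, forces $\tilde E$ to have a very restricted form. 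I would extract the component of $\tilde E$ ``of top degree in the $t$-directions'' — heuristically the coefficient of $d_\delta t \cdots d_1 t$ — and integrate it in $t$ from $-\infty$ to $\infty$ (this converges because outside $(-1,1)$, where $\tilde E = \pi_\pm^*{\sf E}_\pm$ is $t$-independent, that top component vanishes since it involves a $t$-derivative) to define $e \in C^\infty(\SM(\R^{0|\delta}, X))$. Then a direct computation, using that $\tilde E$ is $\Euc(\R^{0|\delta})$-invariant hence ``$\Delta$-closed on $X\times\R$'' in the appropriate sense (the equivariance condition is exactly the analog of closedness — compare the $0|1$ case where it says $d\tilde E = 0$), together with the fundamental theorem of calculus $\int_{-\infty}^{\infty} \partial_t(\cdots)\,dt = [\,\cdots\,]_{-\infty}^{\infty} = {\sf E}_+ - {\sf E}_-$, yields $\Delta e = {\sf E}_+ - {\sf E}_-$. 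The delicate points I expect to spend real effort on are: (i) making precise what ``top $t$-component'' means invariantly — this should be phrased via the operator $\Delta_t$ (the analog of $\Delta$ for the $\R$ factor) or via a contraction $I_{\partial_t}$-type operator, and showing the relevant integral is well-defined and smooth in the $X$-variables; (ii) verifying that the resulting $e$ has the equivariance needed for $\Delta e$ to be a twisted field theory of degree $k$ (this should follow because fiber integration in the $t$-direction commutes with the $\Euc(\R^{0|\delta})$-action on the $X$-factor, and the twist is pulled back from $X$); and (iii) in the renormalizable case, checking that if $\tilde E$ is polynomial then so is $e$ — which holds because the $t$-integration and component-extraction are $\R_{>0}$-equivariant for the dilation action and lower polynomial degree by exactly $\delta$, matching $\Delta$ raising it by $\delta$. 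Throughout I would lean on the $\delta = 1$ case already treated in \cite{HKST} (Stokes' theorem) as the template, the new content being purely the bookkeeping of the larger Euclidean group and the operator $\Delta$.
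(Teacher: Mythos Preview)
Your approach is correct and essentially identical to the paper's: the ``if'' direction uses $\tilde E = E_- + \Delta(\beta(t)\,\pi^* e)$ exactly as you describe, and for ``only if'' the paper's homotopy operator is precisely $Q(\tilde E) = \int_{-1}^1 i_\lambda^* I_{\partial_\lambda}\tilde E\, d\lambda$ (your ``$I_{\partial_t}$-type operator''), with the key step being that the generalized Cartan formula $[d_\delta,\dots,[d_1,I_{\partial_\lambda}]\dots]=\mathcal{L}_{\partial_\lambda}$ collapses to $\mathcal{L}_{\partial_\lambda}\tilde E = \Delta I_{\partial_\lambda}\tilde E$ on a field theory because $d_k\tilde E = 0$ for all $k$, after which the fundamental theorem of calculus gives $\Delta Q\tilde E = i_+^*\tilde E - i_-^*\tilde E$. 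Your delicate point~(ii) is moot: the statement only requires that $\Delta e$ be a twisted field theory, and this is automatic since $\Delta e = E_+ - E_-$.
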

\begin{rmk} We observe that when $\delta=0$, any two field theories over $X$ are concordant. Hence concordance classes of $0|0$-EFTs over $X$ yield a trivial manifold invariant. In light of this, the above result sets the supersymmetric field theories over~$X$ apart from the non-supersymmetric ones, and provides an algebraic characterization for how concordance classes of field theories encode topological data. \end{rmk}

\begin{proof} All the functions and operators employed below respect the polynomial degree on the algebra $C^\infty_{\rm pol}(\underline{\sf SM}(\R^{0|\delta},X))$, so the argument automatically applies to renormalizable field theories. 

Let $\lambda$ be a coordinate on $\R$. If $E_+-E_-=\Delta e$, then define
$$
\tilde{E}(\lambda):=E_-+\Delta(b(\lambda)\cdot e)\in C^\infty(\SM(\R^{0|\delta},X\times \R))
$$
for $b$ a smooth bump function that is equal to $0$ on $(-\infty,-1]$ and to $+1$ on $[1,\infty)$. The action of $\Euc(\R^{0|\delta})$ on $E_\pm$ is through the 1-dimensional representation determined by the twist, and since $E_+-E_-=\Delta e$, we have that $\Euc(\R^{0|\delta})$ acts on $\Delta e$ through this 1-dimensional representation. 

We clam that $\tilde{E}$ is a field theory of the appropriate twist: since $O(\delta)$ acts trivially on the subspace $C^\infty(M)\subset C^\infty(\SM(\R^{0|\delta},M))$ for any $M$, it acts trivially on $b(\lambda)$. The action of $\R^{0|\delta}\rtimes O(\delta)$ is through algebra automorphisms, and using the fact that the operator $\Delta$ comes from the action of $\R^{0|\delta}<\R^{0|\delta}\rtimes O(\delta)$ we find that the action of the Euclidean group on $\Delta(b(\lambda)e)$ is through the same 1-dimensional representation as the action on~$\Delta e$.  Hence, $\tilde{E}$ is a twisted field theory of the appropriate degree. Examining the various pullbacks, $\tilde{E}$ gives a concordance. 

Now suppose that $\tilde{E}$ is a concordance from $E_+$ to $E_-$, and let $\partial_\lambda$ be a nonvanishing vector field on $\R$ associated to a choice of coordinate $\lambda$. We employ an argument similar to one that proves part of Stokes' Theorem; namely we shall define a linear map 
$$
Q\colon 0|\delta\EFT^\bullet(X\times \R)\to C^\infty(\underline{\sf SM}(\R^{0|\delta},X))
$$
with the property $\Delta Q=i_+^*-i_-^*,$ so that we may take $Q(\tilde{E})=: e$. Let
$$
Q(\tilde{E}):=\int_{-1}^1 i^*_\lambda I_{\partial_\lambda} \tilde{E} d\lambda,
$$
where we view the integral as a $0|\delta\EFT^\bullet(X)$-valued function on $\R$. We compute 
$$
\mathcal{L}_{\partial_\lambda} \tilde{E}=[d_\delta,\dots,[d_2,[d_1,I_{\partial_\lambda}]]\dots]\tilde{E}=\Delta I_{\partial_\lambda}\tilde{E}
$$
where the first equality uses Equation \ref{eq:cartan} and the second expands the Lie brackets into a sum of operators acting on $\tilde{E}$, uses the fact that $d_k\tilde{E}=0$ for all $k$, and observes that the only remaining nonzero term is $\Delta I_{\partial_\lambda}\tilde{E}$. 

We calculate
$$
\Delta Q\tilde{E}=\int_{-1}^1 i^*_\lambda \Delta I_{\partial_\lambda} \tilde{E} d\lambda=\int_{-1}^1 i^*_\lambda \mathcal{L}_{\partial_\lambda} \tilde{E} d\lambda=i^*_+\tilde{E}-i^*_-\tilde{E},
$$
where the first equality is differentiation under the integral together with naturality of $\Delta$, and the last is the fundamental theorem of calculus. Thus, we have shown that $E_+$ and $E_-$ are $\Delta$-cohomologous. \end{proof}

\begin{proof}[Proof of Proposition \ref{cor:concpoint}.] We compute
\beq
0|\delta\EFT^{\ev}(\pt)&\cong& C^\infty(\SM(\R^{0|\delta},\pt)\sq \Euc(\R^{0|\delta}))^{\ev} \nonumber \\
&\cong& C^\infty (\pt\sq \Euc(\R^{0|\delta}))^{\ev}\cong \left(C^\infty(\pt)^{\Euc(\R^{0|\delta})}\right)^{\ev}\cong \R,\nonumber
\eeq
and by a similar computation $0|\delta\EFT^{\odd}(\pt)=\{0\}.$ The action of $\Euc(\R^{0|\delta})$ on $\pt$ is trivial, so the action on functions is also trivial. Hence if $E_+$ and $E_-$ are concordant, we have
$$
E_+-E_-=\Delta e=0 \ \implies \ E_+=E_-.
$$
When $X=\pt$, all field theories over $X$ are renormalizable, so the above computation applies to $0|\delta\EFT_{\rm pol}^\bullet(X)$ as well. 
\end{proof}

\section{Quantization and the $0|2$-Sigma Model}\label{sec:push}

In this section we first describe the space of fields $\SM(\R^{0|2},X)$ in familiar geometric terms and prove Lemma~\ref{geo}. This leads to \emph{component fields}, c.f., \cite{5lectures}. We discuss integration on the supermanifold $\SM(\R^{0|\delta},X)$, and then define the action functional in dimension~$0|2$. We express this action in terms of the component fields, proving Lemma~\ref{action}. Finally, we verify that the action determines a Gaussian measure with the desired properties, proving the first half of Theorem \ref{02push}.

\subsection{Component fields}\label{comp}

We prove Lemma \ref{geo} by giving a bijection on $S$-points,
$$
p^*\pi( TX\oplus TX)(S)\cong \SM(\R^{0|2},X)(S).
$$ 
To define the map, there is some preliminary work to be done. Recall that the ordinary covariant Hessian is a map over $X$
$$
\Hess\colon  TX\otimes TX\to {\sf Diff}^{\le 2}(X)
$$
that takes pairs of tangent vectors and outputs a second order differential operator. We can also define the Hessian on pairs of odd tangent vectors via the isomorphism
$$
\pi TX\otimes \pi TX=(\underline{\R}^{0|1}\otimes TX)\otimes (\underline{\R}^{0|1}\otimes TX)\stackrel{\sigma}{\cong} (\underline{\R}^{0|1}\otimes \underline{\R}^{0|1})\otimes (TX\otimes TX)\cong TX\otimes TX
$$
where $\underline{\R}^{0|1}$ is the trivial odd line over $X$, we use that $\pi TX:=\underline{\R}^{0|1}\otimes TX$, $\R^{0|1}\otimes \R^{0|1}\cong \R$, and $\sigma$ denotes the braiding isomorphism. Precomposing $\Hess$ with the above gives a map of vector bundles over $X$, 
\beq
\Hess\colon  \pi TX \otimes \pi TX\to {\sf Diff}^{\le 2}(X).\label{eq:1}
\eeq
Given an $S$-point $f\colon S\to X$, we can pull back to obtain a map over $S$,
$$
f^*\Hess\colon  (f^*\pi TX)\otimes (f^*\pi TX)\to f^*{\sf Diff}^{\le 2}(X). 
$$
Recall that an $S$-point of $\underline{\sf SM}(\R^{0|2},X)$ is a quadruple $(f,\phi_1,\phi_2,E)$, where
\beq
\Phi^*=f+\phi_1\theta_1+\phi_2\theta_2+ E\theta_1\theta_2, \quad \Phi^*\in {\sf ALG}(C^\infty X,C^\infty S[\theta_1,\theta_2]).\label{eq:02Spt}
\eeq
We can plug $\phi_1$ and $\phi_2$ into the above map and get 
$$
(f^*\Hess)(\phi_1,\phi_2)\in \Gamma(f^*{\sf Diff}^{\le 2}(X)). 
$$
We note $S$-points of ${\sf Diff}^{\le 2}(X)$ are maps of vector spaces $C^\infty X\to C^\infty S$ satisfying some additional conditions. Explicitly, on $X$ there is the evaluation map
$$
\Gamma({\sf Diff}^{\le 2}(X))\otimes_\R C^\infty X\to C^\infty X,
$$
which is a map of sheaves of $C^\infty X$-modules via the left action of $C^\infty X$ on differential operators. Using the map $f^*\colon  C^\infty X\to C^\infty S$, we obtain a map of sheaves of $C^\infty S$-modules 
$$
C^\infty(S)\otimes_{f^*} \Gamma(f^*{\sf Diff}^{\le 2}(X))\otimes_\R C^\infty(X)\to C^\infty(S)\otimes_{f^*} C^\infty(X)\cong C^\infty(S). 
$$
So in particular, given $f^*\Hess(\phi_1,\phi_2)\in \Gamma(f^*{\sf Diff}^{\le 2}(X))$ and $x\in C^\infty X$, we get a function in $C^\infty(S)$. 

\begin{lem} \label{geo1} Let $(f,\phi_1,\phi_2,E)$ be an $S$-point of $\underline{\sf SM}(\R^{0|2},X)$. Then 
$$(f,\phi_1,\phi_2,E-(f^*\Hess)(\phi_1,\phi_2))$$ 
is an $S$-point of $p^*\pi(TX\oplus TX)$. Equivalently, $F:=E-(f^*\Hess)(\phi_1,\phi_2))$ is an even derivation with respect to $f$. 
\end{lem}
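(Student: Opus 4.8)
The plan is to work entirely in the functor of points and verify directly that $F := E - (f^*\Hess)(\phi_1,\phi_2)$ satisfies the Leibniz identity of an even derivation with respect to $f$, i.e.\ $F(ab) = F(a)f(b) + f(a)F(b)$ for all $a,b \in C^\infty X$. From the algebra-homomorphism identities (\ref{r02Spt}) we already know $E$ satisfies the quadratic identity $E(ab) = E(a)f(b)+f(a)E(b)+\phi_1(a)\phi_2(b)+\phi_1(b)\phi_2(a)$, so the entire content is to show that the correction term $(f^*\Hess)(\phi_1,\phi_2)$, evaluated as a second-order differential operator pulled back along $f$, picks up exactly the "error" $\phi_1(a)\phi_2(b)+\phi_1(b)\phi_2(a)$ when applied to a product $ab$. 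In other words, I would compute $\big((f^*\Hess)(\phi_1,\phi_2)\big)(ab)$ and show it equals $\big((f^*\Hess)(\phi_1,\phi_2)\big)(a)\,f(b) + f(a)\,\big((f^*\Hess)(\phi_1,\phi_2)\big)(b) + \phi_1(a)\phi_2(b)+\phi_2(a)\phi_1(b)$; subtracting this from the identity for $E$ then yields exactly the derivation property for $F$.

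The key step is therefore a local computation of the covariant Hessian on a product. First I would recall that for a connection $\nabla$ on $X$, the covariant Hessian of a function is $\Hess(x) = \nabla dx$, so for vector fields $v,w$ we have $\Hess(x)(v,w) = v(w(x)) - (\nabla_v w)(x)$. Applied to a product, the Leibniz rule for first derivatives gives $v(w(ab)) = v\big(w(a)\,b + a\,w(b)\big) = v(w(a))\,b + w(a)\,v(b) + v(a)\,w(b) + a\,v(w(b))$, and the first-order term $(\nabla_v w)(ab) = (\nabla_v w)(a)\,b + a\,(\nabla_v w)(b)$, so that $\Hess(x)(v,w)(ab) = \Hess(x)(v,w)(a)\,b + a\,\Hess(x)(v,w)(b) + w(a)v(b) + v(a)w(b)$. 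This is precisely the "second-order defect" of a product. The point is then to feed in the odd tangent data: $\phi_1,\phi_2$ are odd derivations with respect to $f$ (from (\ref{r02Spt})), and after the braiding isomorphism $\sigma$ identifying $\pi TX \otimes \pi TX$ with $TX \otimes TX$, plugging $\phi_1,\phi_2$ into the bilinear map $(v,w)\mapsto \Hess(x)(v,w)(a)f(b) + \dots$ produces $f$-twisted versions of $w(a)v(b)$, which become $\phi_2(a)\phi_1(b)$ and $\phi_1(a)\phi_2(b)$ (with signs handled by the odd braiding, and the symmetry in the two inputs of $\Hess$ accounting for the symmetrization in (\ref{r02Spt})). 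I would also need to note that the lower-order terms $\Hess(x)(v,w)(a)f(b)$ etc.\ are themselves of the form $\big((f^*\Hess)(\phi_1,\phi_2)(a)\big)f(b)$, so they assemble into $\big((f^*\Hess)(\phi_1,\phi_2)\big)(a)f(b) + f(a)\big((f^*\Hess)(\phi_1,\phi_2)\big)(b)$ as desired.

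The main obstacle I anticipate is bookkeeping of signs and parities in the braiding isomorphism $\sigma\colon \pi TX \otimes \pi TX \cong (\underline{\R}^{0|1}\otimes\underline{\R}^{0|1})\otimes(TX\otimes TX)$ together with the identification $\R^{0|1}\otimes\R^{0|1}\cong \R$: one must track that moving an odd coordinate past an odd tangent vector introduces a sign, and that the resulting bilinear pairing on the even vectors $\phi_i$ is the \emph{symmetric} one (since the odd factor contributes an antisymmetrizing sign, cancelled against the antisymmetry of the odd $\theta_i$'s). Getting these to line up so that the output is exactly $\phi_1(a)\phi_2(b)+\phi_1(b)\phi_2(a)$ — matching the last line of (\ref{r02Spt}) — rather than a difference, is the delicate part. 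Once the sign conventions are fixed consistently, the verification that $F(ab) = F(a)f(b) + f(a)F(b)$ is a direct subtraction, and evenness of $F$ is immediate since $E$ and $(f^*\Hess)(\phi_1,\phi_2)$ both land in $(C^\infty S)^{\mathrm{even}}$.
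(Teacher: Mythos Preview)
Your proposal is correct and follows essentially the same argument as the paper: both compute the second-order defect of the Hessian on a product via the classical formula $\Hess(v,w)(x)=v(w(x))-(\nabla_v w)(x)$, observe that this defect matches the extra term $\phi_1(a)\phi_2(b)+\phi_1(b)\phi_2(a)$ in the identity for $E$, and subtract. The paper relegates your sign/braiding concerns to a footnote (``with some care not to introduce extra signs from the braiding isomorphism $\sigma$''), so your more explicit attention to this point is warranted but does not change the structure of the proof.
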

\begin{proof}[Proof of Lemma \ref{geo} using Lemma \ref{geo1}.] The map in Lemma \ref{geo1} is natural in $S$ so gives the required map on the functor of points. This map is invertible, implying Lemma \ref{geo}.\end{proof}

\begin{proof}[Proof of Lemma \ref{geo1}.] The proof follows from direct computation. We emphasize our assumption that $X$ is an ordinary manifold, though a similar result holds with some extra signs for a general supermanifold target. 

The Hessian is $C^\infty X$-linear in both vectors, and so is a map of sheaves of $C^\infty X$-modules. We have the formula\footnote{This can be verified directly with the classical formula for the Hessian, $v\otimes w\mapsto vw-\nabla_v w,$ with some care not to introduce extra signs from the braiding isomorphism, $\sigma$.}
$$
\Hess(\phi_1,\phi_2)(ab)=(\Hess(\phi_1,\phi_2)a)\cdot b+a\cdot \Hess(\phi_1,\phi_2)b+\phi_1(a)\cdot \phi_2(b)+ \phi_1(b)\cdot \phi_2(a).
$$
on $X$, and so when we pull back the Hessian to $S$, for $\phi_1,\phi_2\in \Gamma(f^*\pi TX)$, and $a,b\in C^\infty X$ we find
\beq
f^*\Hess(\phi_1,\phi_2)(ab)&=&(f^*\Hess(\phi_1,\phi_2)(a))\cdot f(b)+f(a)\cdot f^* \Hess(\phi_1,\phi_2)(b)\nonumber \\
&& +\phi_1(a)\cdot \phi_2(b)+ \phi_1(b)\cdot \phi_2(a),\nonumber 
\eeq
where both sides are elements of $C^\infty S$. The above argument is the functor of points version that the Hessian---being a tensor---is determined by its value (and well-defined) at points. 

We recall the condition for $E$ to be a component of an $S$-point of $\SM(\R^{0|2},X)$:
$$
E(ab)=E(a)f(b)+f(a)E(b)+\phi_1(a)\phi_2(b)+\phi_1(b)\phi_2(a).
$$
Upon subtracting, $F:=E-f^*\Hess(\phi_1,\phi_2)$ is an even derivation, 
\beq
(E-f^*\Hess(\phi_1,\phi_2))(ab)&=&E(a)f(b)-(f^*\Hess(\phi_1,\phi_2)(a)) f(b)\nonumber \\
&&+f(a)E(b)-f(a) (f^* \Hess(\phi_1,\phi_2)(b))\nonumber \\
&=&F(a)f(b)+f(a)F(b).\nonumber
\eeq
This completes the proof. \end{proof}

\begin{rmk} Lemma \ref{geo} also allows us to say something about spaces of fields for other $d|2$ field theories. Notice
$$
\underline{\sf SM}(\Sigma^{d|2},X)\cong \underline{\sf SM}(\Sigma^{d|0},\underline{\sf SM}(\mathbb{R}^{0|2},X)),
$$
where we've assumed the odd plane bundle on $\Sigma^{d|0}$ is the topologically trivial one. Thus for a fixed connection on $X$, the $S$-points of $\SM(\Sigma^{d|2},X)$ can be identified with quadrupoles
\beq
f\colon S\times \Sigma^{d}\to X, \ \phi_1\in \Gamma( f^*\pi TX), \ \phi_2\in \Gamma( f^*\pi TX), \ F\in \Gamma( f^*TX). \label{highdSpt}
\eeq
When $d=2$, these are the usual component fields for the 2-dimensional sigma model with two supersymmetries. Furthermore, we can restate the data of (\ref{highdSpt}) as
$$
f\in \SM(\Sigma,X), \  \phi_1\in \pi T_f(\SM(\Sigma,X)), \ \phi_2\in \pi T_f(\SM(\Sigma,X)), \ F\in T_f(\SM(\Sigma,X)).
$$
This shows that Lemma \ref{geo} holds for generalized manifolds that arise as mapping spaces.\end{rmk}

\subsection{Integration on $\SM(\R^{0|\delta},X)$}\label{sec:integration}

Integration of functions on $\SM(\R^{0|1},X)$ is particularly easy, owing to a canonically trivialized Berezinian line. Explicitly, integration is the composition
$$
C^\infty(\SM(\R^{0|1},X))\cong \Omega^\bullet(X)\stackrel{\rm project}{\longrightarrow} \Omega^{\rm top}(X)\stackrel{\int}{\to} \R,
$$
where the last arrow requires an orientation on $X$. We claim that a similar situation holds for $\SM(\R^{0|\delta},X)$, $\delta>1$. The key result is the following.

\begin{prop} Let $\delta>0$. Given a choice of connection on $TX$, there is an isomorphism
$$
\SM(\R^{0|\delta},X)\cong \pi T(T^{\delta-1}X)
$$
as supermanifolds. For $\delta>2$ this isomorphism requires a framing of $\R^{0|\delta}$. \label{intprop}\end{prop}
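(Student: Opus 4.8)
The plan is to argue by induction on $\delta$, bootstrapping from the case $\delta=2$ together with the exponential law for inner homs; write $\nabla$ for a chosen connection on $X$. The base case $\delta=1$ is precisely the identification $\SM(\R^{0|1},X)\cong\pi TX=\pi T(T^0X)$ of Example~\ref{ex:piTX}, which uses neither a connection nor a framing. For the inductive step the engine is the $\delta=2$ case of the proposition applied to various \emph{ordinary} manifolds, so I would establish that case first, in a form valid for any ordinary $M$ equipped with a connection.

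For $\delta=2$: Lemma~\ref{geo} (through Lemma~\ref{geo1}) already supplies, for an ordinary manifold $M$ with a connection, an isomorphism $\SM(\R^{0|2},M)\cong p^*\pi(TM\oplus TM)$ over the projection $p\colon TM\to M$ — the connection being used exactly to replace the auxiliary component $E$ by $F=E-f^*\Hess(\phi_1,\phi_2)$, which is then a derivation. It remains to identify $p^*(TM\oplus TM)$, as a bundle over $TM$, with $T(TM)$: this is the standard horizontal/vertical splitting $T(TM)\cong p^*TM\oplus p^*TM$ of the tangent bundle of the total space of a vector bundle, again governed by $\nabla$. Applying fiberwise parity reversal and taking total spaces then yields an isomorphism of supermanifolds $\SM(\R^{0|2},M)\cong\pi T(TM)$, with a connection and no framing used, consistent with the stated dependence.

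For the inductive step with $\delta\ge 3$: fix a splitting $\R^{0|\delta}\cong\R^{0|\delta-2}\times\R^{0|2}$. Using the exponential law $\SM(A\times B,X)\cong\SM(A,\SM(B,X))$, the $\delta=2$ case applied to $X$, and the identity $\pi T(TX)=\SM(\R^{0|1},TX)$, one obtains
\[
\SM(\R^{0|\delta},X)\;\cong\;\SM(\R^{0|\delta-2},\SM(\R^{0|2},X))\;\cong\;\SM(\R^{0|\delta-2},\SM(\R^{0|1},TX))\;\cong\;\SM(\R^{0|\delta-1},TX).
\]
Now $TX$ is again an ordinary manifold, equipped with the connection induced from $\nabla$ by a standard construction (e.g.\ the complete lift), so the inductive hypothesis applies and gives $\SM(\R^{0|\delta-1},TX)\cong\pi T(T^{\delta-2}(TX))=\pi T(T^{\delta-1}X)$. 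Thus the connection enters only through iterated use of the $\delta=2$ case (via the canonical lifts), while the framing appears as one coordinate-space splitting per reduction step, which together amount to a framing of $\R^{0|\delta}$; none of these is needed when $\delta\le 2$.

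The principal difficulty is bookkeeping rather than a single hard step. In the $\delta=2$ case one must check that the identification of Lemma~\ref{geo} lines up — with the correct signs and the same connection — with the horizontal/vertical decomposition of $T(TM)$, so that the composite really is an isomorphism of supermanifolds; this is a careful but routine computation with the braiding $\sigma$ and the formula $\Hess(v\otimes w)=vw-\nabla_v w$. For general $\delta$ one must confirm that a single $\nabla$ on $X$ furnishes all the connections on $TX,T^2X,\dots$ used along the way (the canonical lifts do) and pin down exactly how much of a framing is forced for $\delta\ge 3$. Alternatively the whole statement can be proved by a direct local-coordinate computation, iteratively re-expressing the higher Taylor components $\phi_I$ ($|I|\ge 2$) of $\Phi^*=f+\sum_I\phi_I\theta_I$ as derivation-type data on iterated tangent bundles by repeated Hessian-type corrections; but the currying argument is cleaner and makes the roles of the connection and the framing transparent.
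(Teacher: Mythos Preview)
Your proof is correct and follows essentially the same route as the paper: the $\delta=1$ base case, the $\delta=2$ case via Lemma~\ref{geo} combined with the horizontal/vertical splitting $T(TM)\cong p^*(TM\oplus TM)$, and then iteration. The only organizational difference is that the paper first invokes the framing once to write $\SM(\R^{0|\delta},X)\cong(\pi T)^\delta X$ and then repeatedly applies the identity $\pi T\pi T\cong \pi T\circ T$, whereas you phrase the iteration through the exponential law; your version is a bit more explicit about the lifted connections on $T^kX$ and the accumulation of framing choices, but the content is the same.
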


With this proposition, we can define integration (for compactly supported or Schwartz functions, denoted ${\rm cs}$) as
$$
C^\infty_{\rm cs}(\SM(\R^{0|\delta},X))\cong \Omega^\bullet_{\rm cs}(T^{\delta-1}X)\stackrel{\rm project}{\longrightarrow} \Omega^{\rm top}_{\rm cs}(T^{\delta-1}X)\stackrel{\int}{\to}\R.
$$
Furthermore, since $TM$ is canonically oriented for any manifold $M$, this integration map has no topological obstruction on $X$ when $\delta>1$. 

\begin{proof}[Proof of Proposition \ref{intprop}.] We've already proved this for $\delta=1$ (without assuming the existence of a connection). Next we prove the proposition for $\delta=2$. A connection on $X$ splits $T(TX)$ into horizontal and vertical subspaces, 
$$
T(TX)\cong H(TX)\oplus V(TX)\cong p^*(TX\oplus TX)
$$
where we get the second isomorphism from maps $Tp\colon H(TX)\to TX$ and the canonical map $V(TX)\to TX$. Sprinkling in the parity reversal functor we get
$$
\pi T(TX)\cong p^*(\pi (TX\oplus TX))\cong \SM(\R^{0|2},X)
$$
where the second isomorphism uses Lemma \ref{geo} concluding the proof for $\delta=2$. Now we iterate the above isomorphism,
$$
\SM(\R^{0|\delta},X)\cong (\pi T)^\delta X\cong \pi T(T^{\delta-1} X),
$$
where the first isomorphism requires a framing on $\R^{0|\delta}$, and we use the fact that $\pi T\pi TX\cong\SM(\R^{0|2},X)\cong \pi T (TX).$ 
 \end{proof}

\begin{rmk} Following \cite{gorms}, we can show that the following is a trivializing section 
$$
d_1x^1d_1\xi^1\cdots d_1 x^nd_1 \xi^n d_2x^1d_2\xi^1\cdots d_2x^nd_2\xi^n\in \Gamma({\rm Ber}(T\SM(\R^{0|2},U))
$$
which is independent of the choice of coordinates, verifying that the Berezinian of $\SM(\R^{0|2},U)$ is canonically trivialized independent of the choice of connection used above. However, since the integration map from the sigma model uses the metric and connection to define a Gaussian measure, we prefer the more geometric argument above. One can check (e.g., in coordinates) that the two trivializing sections of the Berezinian are in fact equal. 
\end{rmk}

\subsection{The Lagrangian density}\label{sec:Lagdens}

As is usual in Lagrangian mechanics the action functional is defined in terms of a Lagrangian density, which we will state in terms of $S$-points $\Phi$,
$$
\mathcal{S}_h(\Phi)=\int_{S\times \R^{0|2}/S}\mathcal{L}_h(\Phi),\quad \Phi\in \SM(S\times\R^{0|2},X),
$$
where $\mathcal{L}_h\in {\rm Ber}(S\times \R^{0|2}/S)$ is a relative density; see Appendix \ref{berint}. The Lagrangian will contain two terms,
$$
\mathcal{L}_h=\frac{1}{2} \|T\Phi\|^2-\Phi^*h
$$
to be defined below, for $h\in C^\infty X$. In this section we focus on defining and understanding the first term (i.e., set $h=0$). We will first give a coordinate-independent definition of $\mathcal{L}_0$. Then we fix a choice of coordinates, and set up for computations that follow. 

Let $\Phi\in \SM(\R^{0|2},X)(S)$. Then $T\Phi\in \Gamma(S\times \R^{0|2},{\sf Hom}_{S\times\R^{0|2}}(T\R^{0|2},\Phi^*TX)),$ where (in an abuse of notation) $T\R^{0|2}$ denotes the vertical tangent bundle to $S\times \R^{0|2}\to S$. The metric on $X$ gives a pairing
$$
\langle -\rangle \colon  \Phi^*TX\otimes \Phi^*TX\to C^\infty(S\times \R^{0|2}),
$$
which we apply to $T\Phi\otimes T\Phi\in \Gamma(S\times \R^{0|2},{\sf Hom}_{S\times\R^{0|2}}(T\R^{0|2},\Phi^*TX)^{\otimes 2})$ to obtain
$$
2\mathcal{L}_0=\|T\Phi\|^2: =\langle T\Phi\otimes T\Phi\rangle \in \Gamma(S\times\R^{0|2},{\sf Hom}_{S\times\R^{0|2}}(T\R^{0|2}\otimes T\R^{0|2},\underline{\R}))
$$
where $\underline{\R}$ denote the trivial bundle on $S\times\R^{0|2}$. By the symmetry of the pairing $\langle-\rangle$, we find that 
$$
\|T\Phi\|^2\in \Gamma(\Sym^2((T\R^{0|2})^*)\subset \Gamma((T\R^{0|2}\otimes T\R^{0|2})^*)\cong \Gamma({\sf Hom}_{S\times\R^{0|2}}(T\R^{0|2}\otimes T\R^{0|2},\underline{\R}))
$$ 
where $\Sym^2((T\R^{0|2})^*)$ is the second symmetric power of the super vector bundle $(T\R^{0|2})^*$. This bundle is precisely ${\rm Ber}(S\times \R^{0|2}/S)$, verifying that $\|T\Phi\|^2$ is indeed a section of the relative Berezinian. 

If we follow the action of $\Euc(\R^{0|2})$ through the definition of $\|T\Phi\|^2$, we find that it acts on the map entirely through its action on $\Sym^2((T\R^{0|2})^*)$, which in turn is induced from from the action of $\Euc(\R^{0|2})$ on $\R^{0|2}$. To be explicit, the action by translations is trivial, and given an $S$-point $A$ of $O(2)$, it acts by $1/{\rm Det}(A) $ on the bundle $\Sym^2((T\R^{0|2})^*)$. This is the identical action to that of $\Euc(\R^{0|2})$ on ${\rm Ber}(S\times\R^{0|2}/S)$, so the map 
$$
\|T\Phi\|^2\colon  \SM(\R^{0|2},X)\to {\rm Ber}(S\times\R^{0|2}/S)
$$
is $\Euc(\R^{0|2})$-equivariant, and so defines an \emph{invariant} section of ${\rm Ber}(S\times\R^{0|2}/S)$. 

Above we understood $\|T\Phi\|^2$ in terms of the supergeometry of $\SM(\R^{0|2},X)$; presently we wish to describe $\|T\Phi\|^2$ via local geometry on $X$, e.g., curvature and the Riemannian metric. First we trivialize the Berezinian by chosing coordinates $\theta_1,\theta_2$, which gives us trivializing sections $\partial_{\theta_1},\partial_{\theta_2}$ of $T\R^{0|2}$, so
\beq
\Phi &\stackrel{\|T\Phi\|^2}{\mapsto} & \langle T\Phi(\partial_{\theta_1}),T \Phi(\partial_{\theta_2} )\rangle [d\theta_1d\theta_2]\nonumber.
\eeq
Below we will focus on computing $\langle T\Phi(\partial_{\theta_1}),T \Phi(\partial_{\theta_2} )\rangle$. We require the following.
\begin{lem} \label{lem:pullback} Let $f\colon  N\to M$ be a map of supermanifolds. There is an isomorphism of $C^\infty(N)$-modules,
$$
 {\sf Der}(C^\infty M,C^\infty M)\otimes_f C^\infty N\cong {\sf Der}_f(C^\infty M,C^\infty N),$$
 where ${\sf Der}$ denote derivations from an algebra to itself, and ${\sf Der}_f$ denotes derivations between algebras with respect to a morphism~$f$. 
\end{lem}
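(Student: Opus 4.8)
The plan is to prove this as a standard fact about modules of derivations, working purely algebraically with the sheaves of functions. Fix the pullback map $f^*\colon C^\infty(M)\to C^\infty(N)$, which makes $C^\infty(N)$ into a $C^\infty(M)$-algebra. Recall that ${\sf Der}_f(C^\infty M, C^\infty N)$ consists of linear maps $D\colon C^\infty(M)\to C^\infty(N)$ satisfying the $f$-twisted Leibniz rule $D(ab)=D(a)\cdot f^*(b)+(-1)^{|D||a|}f^*(a)\cdot D(b)$, and this is naturally a $C^\infty(N)$-module via the left action on the target. On the other side, ${\sf Der}(C^\infty M, C^\infty M)$ is a $C^\infty(M)$-module (vector fields on $M$), so the tensor product $ {\sf Der}(C^\infty M,C^\infty M)\otimes_f C^\infty(N)$ along $f^*$ is a $C^\infty(N)$-module.

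The map I would write down goes left to right: send a generator $v\otimes r$ (with $v$ a vector field on $M$ and $r\in C^\infty(N)$) to the $f$-derivation $a\mapsto r\cdot f^*(v(a))$, i.e.\ $r\cdot (f^*\circ v)$. First I would check this is well-defined over $C^\infty(M)$: for $\phi\in C^\infty(M)$, both $(\phi v)\otimes r$ and $v\otimes f^*(\phi)r$ map to $r\cdot f^*(\phi)\cdot f^*(v(a)) = r f^*(\phi v(a))$, which agrees since $f^*$ is an algebra map. Next, that the image genuinely satisfies the twisted Leibniz rule is immediate from $v$ being a derivation on $C^\infty(M)$ and $f^*$ being an algebra homomorphism. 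So the map is a well-defined $C^\infty(N)$-module homomorphism.

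To finish I would exhibit an inverse, and this is where the real content lies. The key point is that supermanifolds are affine and admit partitions of unity, so one can argue locally and patch. Over a chart $U\subseteq M$ with coordinates $\{x^1,\dots,x^n;\xi^1,\dots,\xi^m\}$, $ {\sf Der}(C^\infty U, C^\infty U)$ is free on $\{\partial_{x^i},\partial_{\xi^j}\}$; correspondingly an $f$-derivation $D$ over $f|_{f^{-1}(U)}$ is determined by the tuple $(D(x^i), D(\xi^j))\in C^\infty(f^{-1}(U))$ by the Leibniz rule and the fact that these functions generate $C^\infty(U)$ — any $f$-derivation vanishing on coordinates vanishes on all of $C^\infty(U)$ by continuity/Hadamard's lemma in the even directions and nilpotence in the odd directions. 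Hence locally both sides are free $C^\infty(f^{-1}(U))$-modules of rank $n|m$ and the map above sends the basis $\partial_{x^i}\otimes 1, \partial_{\xi^j}\otimes 1$ to the basis dual to coordinate evaluation, so it is an isomorphism over $U$. Since the map is natural in $U$ it glues to a global isomorphism of $C^\infty(N)$-modules.

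\textbf{Main obstacle.} The step requiring care is showing an $f$-derivation is determined by its values on coordinate functions — i.e.\ that the coordinate functions generate $C^\infty(M)$ in the sense appropriate for derivations. In the even directions this is the usual argument via Hadamard's lemma (writing $g(x)-g(x_0)=\sum (x^i-x_0^i)g_i(x)$ and iterating, so that a derivation is controlled by its action on the $x^i$ up to the fact that all functions are smooth), and in the odd directions it follows from the polynomial structure of $C^\infty$ in the $\xi^j$. One must also be slightly attentive to signs coming from the $\Z/2$-grading when composing $f^*$ with $v$, and to the fact that the tensor product $\otimes_f$ is along an algebra map that need not be injective; but neither of these causes trouble once the local model is in hand, and the gluing is routine given the affineness of supermanifolds invoked earlier in the paper.
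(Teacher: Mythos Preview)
Your proof is correct and follows essentially the same approach as the paper: define the forward map $v\otimes r\mapsto r\cdot(f^*\circ v)$, then work locally in coordinates on $M$ to see both sides are free of the same rank and invoke the sheaf property to glue. The only cosmetic difference is that the paper writes down the explicit local inverse $V\mapsto \sum_i \partial_{x^i}\otimes_f V(x^i)$ (since it is used in subsequent computations), whereas you argue the equivalent ``basis to basis'' statement.
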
 
\begin{proof} For $W\otimes n\in {\sf Der}(C^\infty M,C^\infty M)\otimes_f C^\infty N$ we define a map
$$W\otimes n \mapsto n\cdot V, \quad V(m):= (f^*W)m.$$ 
One can show that map is bijective abstractly, but we will need an explicit inverse map for computations below. As usual with maps \emph{into} a tensor product, this inverse is somewhat less natural and we need coordinates $\{x^i\}$ on $M$ to define it. We will show the above isomorphism holds in each coordinate patch, and the sheaf property will prove the result. 

With a choice of coordinates in effect, given $V\in {\sf Der}_f(C^\infty M,C^\infty N)$, we get a map
$$
V\mapsto \sum (\partial_{x^i})\otimes_f V(x^i).
$$
One can check explicitly that this defines an inverse in the given chart $\{x^i\}$. \end{proof}

To calculate $\langle T\Phi(\partial_{\theta_1}),T \Phi(\partial_{\theta_2} )\rangle$, we apply Lemma \ref{lem:pullback} to $N=S\times\R^{0|2}$ and $M=X$. For an $S$-point $\Phi\in \SM(\R^{0|2},X)(S)$, we will examine the composition
\beq
\begin{array}{ccc}
\Big( {\sf Der}(C^\infty X,C^\infty(S\times\R^{0|2}))\Big) \bigotimes_{C^\infty(S\times \R^{0|2})} \Big( {\sf Der}(C^\infty X,C^\infty(S\times\R^{0|2}))\Big) \\
\downarrow \cong \\
\Big( {\sf Der}(C^\infty X)\otimes_\Phi C^\infty(S\times \R^{0|2})\Big) \bigotimes_{C^\infty (S\times\R^{0|2})}\Big({\sf Der}(C^\infty X)\otimes_\Phi C^\infty(S\times \R^{0|2})\Big)\\
\downarrow \cong \\
{\sf Der}(C^\infty X)\otimes_{C^\infty X} {\sf Der}(C^\infty X)\otimes_\Phi C^\infty(S\times \R^{0|2})\\
\downarrow g \\
C^\infty X\otimes_\Phi C^\infty(S\times \R^{0|2}) \\
\downarrow {\rm act} \\
C^\infty(S\times\R^{0|2})
\end{array}\nonumber
\eeq
where in the last line we use the action of $C^\infty X$ on $C^\infty(S\times \R^{0|2})$ by $\Phi^*$, and in the second to last line the metric on $X$ is thought of as
$$
g\colon {\sf Der}(C^\infty X)\otimes_{C^\infty X} {\sf Der}(C^\infty X)\to C^\infty X.
$$
Now we compute for an $S$-point $\Phi\colon S\times\mathbb{R}^{0|2}\to X$,
$$
T\Phi(\partial_{\theta_1})=\psi_1+\theta_2 F, \quad T\Phi(\partial_{\theta_2})=\psi_2-\theta_1F,
$$
so that $T\Phi(\partial_{\theta_i})\in {\rm Der}_\Phi(C^\infty X,C^\infty(S\times\mathbb{R}^{0|2}))$. Lemma \ref{lem:pullback} gives us an isomorphism
$$
T\Phi(\partial_{\theta_i})\in {\rm Der}(C^\infty X,C^\infty(S\times\mathbb{R}^{0|2}))\cong {\rm Der}(C^\infty X,C^\infty X)\otimes_\Phi C^\infty (S\times\mathbb{R}^{0|2})
$$ 
and using the proof of the lemma we find
\beq
T\Phi(\partial_{\theta_1})\mapsto \sum_i \frac{\partial}{\partial x^i}\otimes_\Phi (d_1x^i+\theta_2d_2d_1x^i)(\Phi),\nonumber \\
T\Phi(\partial_{\theta_2})\mapsto \sum_j \frac{\partial}{\partial x^j}\otimes_\Phi (d_2x^j-\theta_1d_2d_1x^j)(\Phi),\nonumber 
\eeq
where as usual we are identifying functions with their natural transformations. Then we can apply the pairing $g$,
$$
\langle T\Phi(\partial_{\theta_1}),T\Phi(\partial_{\theta_2})\rangle=\sum_{ij} g_{ij}\otimes_\Phi (d_1x^i+\theta_2d_2d_1x^i)(d_2x^j-\theta_1d_2d_1x^j)(\Phi)
$$
where $g_{ij}$ are the components of $g$ in the given coordinates. 

It remains to understand how the pulled back metric, $\Phi^*g_{ij}$, acts on functions on~$\underline{\sf SM}(\mathbb{R}^{0|2}, M)$, so we compute 
$$
\Phi^*(g_{ij})=g_{ij}(\Phi)+\theta_1d_1g_{ij}(\Phi)+\theta_2d_2g_{ij}(\Phi)+\theta_1\theta_2d_2d_1g_{ij}(\Phi).
$$
Putting this together we obtain an element of $C^\infty(\underline{\sf SM}(\mathbb{R}^{0|2},M))$, whose value at an $S$-point~$\Phi$ is
\beq
\langle T\Phi(\partial_{\theta_1}),T\Phi(\partial_{\theta_2}) \rangle&=&\sum_{i,j} \Bigg( (g_{ij}+\theta_1d_1g_{ij}+\theta_2d_2g_{ij}+\theta_1\theta_2d_2d_1g_{ij})(\Phi)\nonumber \\
&&\phantom{\sum}\cdot (d_1x^i+\theta_2d_2d_1x^i)(d_2x^j-\theta_1d_2d_1x^j)(\Phi)\Bigg).\label{crap}
\eeq
The above formula simples considerably after integrating over the fibers $S\times \R^{0|2}\to S$, which we will do in the next subsection. 

\subsection{The action functional} \label{sec:action}
In this section we prove Lemma \ref{action}. We accomplish this by computing the integral
$$
\mathcal{S}_0(\Phi):=\int_{S\times \R^{0|2}/S}\frac{1}{2}\|T\Phi\|^2=\int_{S\times\R^{0|2}/S}\frac{1}{2} \langle T\Phi(\partial_{\theta_1}),T\Phi(\partial_{\theta_2})\rangle[d\theta_1d\theta_2]
$$
using Equation \ref{crap}. Recall that
$$
\int_{S\times\R^{0|2}/S}\theta_1\theta_2[d\theta_1d\theta_2]=1\in C^\infty (S),
$$
so if we expand Equation \ref{crap} and project to the $\theta_1\theta_2$ component, we get
\beq
\mathcal{S}_0(\Phi)&=&\frac{1}{2}\sum_{i,j}(g_{ij}d_2d_1x^id_2d_1x^j+d_1g_{ij}d_2d_1x^id_2x^j+d_2g_{ij}d_1x^id_2d_1x^j+d_2d_1g_{ij}d_1x^id_2x^j)(\Phi).\nonumber\\
&=&\frac{1}{2}\sum_{i,j} \Big(g_{ij}d_2d_1x^id_2d_1x^j+\frac{\partial g_{ij}}{\partial x^k}d_1x^kd_2d_1x^id_2x^j+\frac{\partial g_{ij}}{\partial x^k}d_2x^kd_1x^id_2d_1x^j\nonumber \\
&&+\frac{\partial g_{ij}}{\partial x^k}d_2d_1x^kd_1x^i d_2x^j+\frac{\partial^2 g_{ij}}{\partial x^k\partial x^l}d_2x^ld_1x^kd_1x^i d_2x^j\Big)(\Phi)\label{eq:actionexp}
\eeq
Using Lemma \ref{geo}, we can interpret the above in terms of more familiar Riemmanian geometry of $X$. For the rest of the argument, formulas will employ the index summation convention. First we collect the terms in (\ref{eq:actionexp}) that have a first derivative of $g_{ij}$ and we observe
\beq
\left(\frac{\partial g_{ij}}{\partial x^k}d_1x^kd_2d_1x^id_2x^j+\frac{\partial g_{ij}}{\partial x^k}d_2x^kd_1x^id_2d_1x^j +\frac{\partial g_{ij}}{\partial x^k}d_2d_1x^kd_1x^i d_2x^j\right)(\Phi)\nonumber\\
=\left(\frac{\partial g_{ij}}{\partial x^k}-\frac{\partial g_{ki}}{\partial x^j}+\frac{\partial g_{kj}}{\partial x^i}\right)d_2d_1x^kd_1x^id_2x^j(\Phi)
=2\Gamma_{ijk}d_2d_1x^kd_1x^id_2x^j(\Phi)\nonumber
\eeq
where $\Gamma_{ijk}$ denotes the Christoffel symbol. Next we notice that for $x^k$ a coordinate, 
\beq
\Hess(\phi_1,\phi_2)(x^k)(\Phi)=-\Gamma_{ij}^k\phi_1(x^i)\phi_2(x^j)=-\Gamma_{ij}^kdx^idx^j(\Phi)\label{eq:hesslocal}
\eeq
using the fact that the second derivative of a coordinate function vanishes. Making the identifications
$$
\phi_1(x^i)=dx^i(\Phi),\quad \phi_2(x^j)=dx^j(\Phi),\quad (F+\Hess(\phi_1,\phi_2))(x^k)=d_2d_1x^k(\Phi),
$$
we compute 
\beq
2\mathcal{S}_0(\Phi)&=&g_{ij}(F+\Hess(\phi_1,\phi_2))(x^i)(F+\Hess(\phi_1,\phi_2))(x^j)\nonumber \\
&&+2\Gamma_{ijk}(F+\Hess(\phi_1,\phi_2))(x^k)\phi_1(x^i)\phi_2(x^j)+\frac{\partial^2 g_{ij}}{\partial x^k\partial x^l}\phi_1(x^i)\phi_2(x^j)\phi_1(x^k)\phi_2(x^l)\nonumber\\
&=&g_{ij}F(x^i)F(x^j)+g_{ij}F(x^i)\Hess(\phi_1,\phi_2)(x^j)+g_{ij}\Hess(\phi_1,\phi_2)(x^i)F(x^j)\nonumber\\
&&+g_{ij}\Hess(\phi_1,\phi_2)(x^i)\Hess(\phi_1,\phi_2)(x^j)+2\Gamma_{ijk}F(x^i)\phi_1(x^j)\phi_2(x^k)\nonumber\\
&&+2\Gamma_{ijk}\Hess(\phi_1,\phi_2)(x^i)\phi_1(x^j)\phi_2(x^k)+\frac{\partial^2 g_{ij}}{\partial x^k\partial x^l}\phi_1(x^i)\phi_2(x^j)\phi_1(x^k)\phi_2(x^l)\nonumber\\
&=&g_{ij}F(x^i)F(x^j)+2\Gamma_{ijk}\Hess(\phi_1,\phi_2)(x^i)\phi_1(x^j)\phi_2(x^k)\nonumber \\
&&+g_{ij}\Hess(\phi_1,\phi_2)(x^i)\Hess(\phi_1,\phi_2)(x^j)+\frac{\partial^2 g_{ij}}{\partial x^k\partial x^l}\phi_1(x^i)\phi_2(x^j)\phi_1(x^k)\phi_2(x^l)\nonumber\\
&=&\langle F,F\rangle+R(\phi_1,\phi_2,\phi_1,\phi_2).\nonumber
\eeq
where in the last two equalities we use Equation \ref{eq:hesslocal} and the standard expression for the curvature tensor in local coordinates. Thus we have
$$
\mathcal{S}_0(\Phi)=\frac{1}{2}\left(\langle F,F\rangle+R(\phi_1,\phi_2,\phi_1,\phi_2)\right),
$$
proving Lemma \ref{action} when $h=0$. 
For $h\ne 0$, we compute
$$
\Phi^* h=f(h)+\theta_1\phi_1(h)+\theta_2\phi_2(h)+\theta_1\theta_2 E(h),
$$
and when we integrate
$$
\int_{S\times\R^{0|2}/S} \Phi^*h[d\theta_1d\theta_2]=E(h)=F(h)+\Hess(\phi_1,\phi_2)h=\langle F,\nabla h\rangle+\Hess(\phi_1,\phi_2)h.
$$
This computation together with the above shows
\beq
\mathcal{S}_{\lambda h}(\Phi)&=&\int_{S\times\R^{0|2}/S} \left(\frac{1}{2}\|T\Phi\|^2-\lambda (\Phi^*h) \right)\nonumber\\
&=&\frac{1}{2}\langle F,F\rangle+\frac{1}{2}R(\phi_1,\phi_2,\phi_1,\phi_2)-\lambda \langle F,\nabla h\rangle-\lambda \Hess(\phi_1,\phi_2)h,\nonumber
\eeq
which concludes the proof of Lemma \ref{action}. 

\begin{rmk} This formula is in agreement with the definition utilized in \cite{strings1,5lectures,freedclass} to obtain the $1|2$-supersymmetric quantum mechanics action functional from the Lagrangian density. For reference, the kinetic term in this action has the form
$$
\mathcal{S}_0(\gamma,\phi_1,\phi_2,F)=\int_\gamma \left(\frac{1}{2}\|\dot{\gamma}\|^2+\frac{1}{2}\langle \nabla_{\dot{\gamma}} \phi_1,\phi_1\rangle+\frac{1}{2}\langle \nabla_{\dot{\gamma}} \phi_2,\phi_2\rangle+\frac{1}{2}R(\phi_1,\phi_2,\phi_1,\phi_2)+\frac{1}{2}\|F\|^2\right)d\gamma
$$
where $\gamma$ is a path in $X$, $\phi_i\in \Gamma(\gamma^*\pi TX)$ and $F\in \Gamma(\gamma^*TX)$. The dimensional reduction from $1|2$ to $0|2$ has the effect of only considering the constant paths in $X$ for which $\dot{\gamma}=0$. This recovers the action functional we derived above.
 \end{rmk}

\subsection{The quantization}\label{sec:geopush}

In this section, we show how the action functional described above determines a Gaussian measure and hence a quantization for renormalizable $0|2$-EFTs, proving Theorem \ref{02push}. We will define a map 
$$
C^\infty_{\rm pol}(\SM(\R^{0|2},X\times Y))\stackrel{p_!}{\longrightarrow} C^\infty_{\rm pol}(\SM(\R^{0|2},Y)),
$$
and show that it restricts to field theories. Consider
\beq
C^\infty_{\rm pol}(\SM(\R^{0|2},X\times Y))&\stackrel{\cdot \exp(-\mathcal{S})\otimes id}{\longrightarrow}& C^\infty(\SM(\R^{0|2},X))\otimes C^\infty_{\rm pol}(\SM(\R^{0|2},Y))\nonumber \\
&\cong& \Omega^\bullet(TX)\otimes C^\infty_{\rm pol}(\SM(\R^{0|2},Y))\nonumber \\
&\stackrel{\rm project\times id}{\longrightarrow}& \Omega^{\rm top}(TX)\otimes C^\infty_{\rm pol}(\SM(\R^{0|2},Y))\nonumber \\
&\stackrel{\frac{1}{N}\int(-) \otimes id}{\dashrightarrow}& C^\infty_{\rm pol}(\SM(\R^{0|2},Y)),\nonumber
\eeq
where the second line uses Proposition \ref{intprop}, and $\mathcal{S}$ is the action of the $0|2$ sigma model on $X$. What remains is to check convergence of this integral. However, by how we've set things up, the image in $\Omega^\bullet(TX)$ consists of functions with polynomial growth in the noncompact direction, as discussed in \ref{sec:funcs}. We claim that the Gaussian measure $\exp(-\mathcal{S})$ allows us to integrate all functions in the image.

It suffices to work locally on $X$ to verify this claim, and furthermore we can set $Y=\pt$ for this part. So let $U\subset (\R^n,g)$ be an (bounded) open submanifold with coordinates $\{x^i\}$. Then $\SM(\R^{0|2},U)\cong U\times\R^n\times R^{0|2n}$. Polynomial functions at an $S$-point $\Phi=(x,\phi_1,\phi_2,F)$ have the form 
$$
G(\Phi)=g(x)P(F)\omega(\phi_1)\eta(\phi_2)\in C^\infty_{\rm pol}(\SM(\R^{0|2},U))
$$
for $P$, $\omega$ and $\eta$ polynomials. First we multiply by $\exp(-\mathcal{S})$, so we have
\beq
G(\Phi)\exp(-\mathcal{S}(\Phi))=e^{-F^2}e^{-R(\phi_1,\phi_2,\phi_1,\phi_2)}g(x)P(F)\omega(\phi_1)\eta(\phi_2) \label{eq:int}
\eeq
Expanding in coordinates as in the previous subsection, we project to the coefficient of $\Pi_{i=1}^nd_1x^id_2x^i$, which we identify with the a section of the line bundle $\Omega^{\rm top}(TU)$. The only problem we might encounter in convergence of the integral is in the $F$-variable. But $P(F)e^{-|F|^2}$ is integrable on $\R^n$ for any metric $g$, which completes the local argument. 

Now we need to show that this map respects the action by $\Euc(\R^{0|2})\cong \R^{0|2}\rtimes O(2)$, and for this we can no longer set $Y=\pt$. 

\begin{prop} The map $p_!$ restricts to a map on $\R^{0|2}\rtimes SO(2)$-invariant functions. \end{prop}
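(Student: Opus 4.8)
The plan is to show that $p_!$ intertwines the diagonal action of $\R^{0|2}\rtimes SO(2)$ on $\SM(\R^{0|2},X\times Y)$ with its action on $\SM(\R^{0|2},Y)$; restricting an intertwiner to invariant functions then yields the proposition. Since the inner hom carries products to products, $\SM(\R^{0|2},X\times Y)\cong\SM(\R^{0|2},X)\times\SM(\R^{0|2},Y)$ with the Euclidean group acting diagonally. I will argue infinitesimally: exactly as in Example~\ref{0|1deRham}, invariance of a function under $\R^{0|2}\rtimes SO(2)$ is equivalent to annihilation by the odd generators $d_1,d_2$ of $\R^{0|2}$ and by the generator $J$ of $\mathfrak{so}(2)$. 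Writing $v_X$ and $v_Y$ for the actions induced on the two factors (Section~\ref{sec:grpact}), each of these (super)derivations decomposes on the product as $v=v_X+v_Y$. Hence it suffices to prove, for $v\in\{d_1,d_2,J\}$, the identities $p_!\circ v_Y=v_Y\circ p_!$ and $p_!\circ v_X=0$; together they give $p_!\big((v_X+v_Y)G\big)=v_Y\,p_!(G)$, so $vG=0$ forces $v_Y\,p_!(G)=0$.

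The identity $p_!\circ v_Y=v_Y\circ p_!$ is formal, as $v_Y$ acts only through the $\SM(\R^{0|2},Y)$ factor and so commutes with the operations — multiplication by $\exp(-\mathcal S)$, projection onto the top component in the $X$-directions, and Berezinian integration over $\SM(\R^{0|2},X)$ — out of which $p_!$ is built. For the other identity, write $p_!(H)=\tfrac1N\int_{\SM(\R^{0|2},X)}H\exp(-\mathcal S)$ for the relative Berezinian integral along $\SM(\R^{0|2},X\times Y)\to\SM(\R^{0|2},Y)$ taken with the canonical trivialization $\mathcal D\Phi$ from Proposition~\ref{intprop} and its following remark. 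The action $\mathcal S$ is $\R^{0|2}\rtimes SO(2)$-invariant: its kinetic part integrates the invariant relative-Berezinian section $\|T\Phi\|^2$ (Section~\ref{sec:Lagdens}) against the translation- and $SO(2)$-invariant Berezinian measure on $\R^{0|2}$, and its potential part equals $\int_{S\times\R^{0|2}/S}\Phi^*h=E(h)$, which is fixed by odd translations and by $SO(2)$ (Lemma~\ref{action}; a reflection in $O(2)$ would scale $E$ by $\det=-1$, which is precisely why only the connected part is treated here). Therefore $v_X(\exp(-\mathcal S))=-(v_X\mathcal S)\exp(-\mathcal S)=0$, so $v_X\big(H\exp(-\mathcal S)\big)=(v_X H)\exp(-\mathcal S)$ and $p_!(v_X H)=\tfrac1N\int_{\SM(\R^{0|2},X)}v_X\big(H\exp(-\mathcal S)\big)$.

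It remains to see this integral vanishes. Under $\SM(\R^{0|2},X)\cong\pi T(TX)$, each of $(d_1)_X,(d_2)_X,J_X$ acts by a divergence-free vector field with respect to $\mathcal D\Phi$: the $d_i$ are the $\delta=2$ analogues of the de~Rham differential, whose image is killed by $\mathcal D\Phi$-integration just as $\int_{\pi TX}d(-)=0$, while $J_X$ is an infinitesimal block rotation of the purely odd fibres of $\pi T(TX)\to TX$ and so is unimodular. Thus $v_X\big(H\exp(-\mathcal S)\big)\,\mathcal D\Phi$ is a Lie derivative of the Berezinian density $H\exp(-\mathcal S)\,\mathcal D\Phi$, and the $(0|2)$-Stokes theorem gives $\int_{\SM(\R^{0|2},X)}v_X\big(H\exp(-\mathcal S)\big)=0$. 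This establishes $p_!\circ v_X=0$ and finishes the argument.

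The step I expect to be the crux is the final Stokes vanishing: $\pi T(TX)$ is noncompact, so one must verify that the boundary contributions at infinity in the $TX$-directions of the divergence-free fields $v_X$ actually die. This is exactly where it matters that $p_!$ integrates a function of the form $H\exp(-\mathcal S)$ rather than a general polynomial: the Gaussian factor $\exp(-\tfrac12\|F\|^2)$ in $\mathcal S$ supplies the decay making the integration by parts legitimate — which is also why renormalizable (polynomially bounded) field theories are the right domain for $p_!$. The same Berezinian bookkeeping shows a reflection in $O(2)\setminus SO(2)$ contributes a factor $\det=-1$; hence only $SO(2)$-equivariance holds on the nose, with full $O(2)$-equivariance recovered after twisting by the determinant character, in keeping with Section~\ref{sec:pFTdef}.
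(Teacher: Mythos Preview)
Your proposal is correct and proceeds by a genuinely different route from the paper. The paper argues by a direct local computation: it reduces to $X=Y=\R$ via Fubini, Taylor-expands an invariant $\omega$ in the odd generators $d_1x,d_2x$, and then checks that the surviving coefficient $\eta_{12}$ is $SO(2)$-invariant on the nose, while for $d_i$-invariance it shows that failure of $\eta_{12}$ to be $d_i$-closed forces the $x$-integrand $f_{12}$ to be a total derivative, so the integral vanishes. You instead prove the stronger statement that $p_!$ \emph{intertwines} the $\R^{0|2}\rtimes SO(2)$-actions, by splitting each infinitesimal generator as $v=v_X+v_Y$, using $v_X\mathcal{S}=0$, and then invoking a super-Stokes argument (the fields $(d_i)_X$ and $J_X$ are divergence-free for $\mathcal{D}\Phi$, and the Gaussian factor kills boundary terms in the $F$-direction while compactness of $X$ handles the base) to conclude $\int v_X(H e^{-\mathcal{S}})\,\mathcal{D}\Phi=0$. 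Both arguments ultimately rest on the same vanishing of total derivatives over the closed manifold $X$; your packaging is more conceptual and yields the intertwining property as a bonus, while the paper's coordinate computation makes the cancellation mechanism completely explicit. Your remark that the potential term $E(h)=d_2d_1h$ is only $SO(2)$-invariant (and flips sign under a reflection) is also sharper than the paper's passing claim of full $O(2)$-invariance of $\exp(-\mathcal{S})$.
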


\begin{proof} Since $\exp(-\mathcal{S})$ is $\R^{0|2}\rtimes O(2)$-invariant, we may assume that we are given an element in $\omega\in C^\infty(\SM(\R^{0|2},X\times Y)$ that is invariant and integrable. It suffices to check the claim locally, so together with Fubini's theorem we can restrict to the case that $X=Y=\R$. Choosing a coordinate $x$ on $X=\R$, we Taylor expand $\omega$ to obtain 
\beq
\omega&=&\omega_0+\omega_1d_1x+\omega_2d_2x+\omega_{12}d_1xd_2x,\nonumber\\
\omega_i&=&\eta_i\cdot P_i(d_2d_1x)f_i(x)\nonumber
\eeq
where $\eta_i\in C^\infty(\SM(\R^{0|2},Y))$, $P_i$ is a polynomial, and $f_i\in C^\infty(X)=C^\infty(\R)$. The map~$p_!$ projects to the last term in the Taylor expansion of $\omega$ and integrates over the $x$ and $d_2d_1x$ variables. So proving that $p_! \omega$ is invariant amounts to showing that either $\eta_{12}$ is invariant or that the integral is zero. 

Since $d_1xd_2x$ and $\omega$ are $SO(2)$-invariant, $\omega_{12}$ must be as well. Any function of $x$ and $d_2d_1x$ is also $SO(2)$-invariant, so $\eta_{12}$ must also be $SO(2)$-invariant. 

Now suppose that $\eta_{12}$ is not invariant under one of the $d_i$, say $d_1$. Then for $\omega$ to be invariant (i.e., $d_1\omega=0$) we require that 
$$
d_1(\omega_2)d_2x=d_1(\omega_{12}d_1xd_2x).
$$
Computing we find that this implies $f_{12}(x)=f_2'(x)$ is a total derivative, and so the integral vanishes. Using the same argument for $d_2$, we conclude that $p_!\omega$ is $\R^{0|2}$-invariant, and so $\R^{0|2}\rtimes SO(2)$-invariant
\end{proof}

\begin{proof}[Proof of Theorem \ref{02push}] We have the isomorphism
$$
0|2\EFT_{\rm pol}^\bullet(X\times Y)\cong C^\infty_{\rm pol}(\SM(\R^{0|2},X\times Y))^{\R^{0|2}\rtimes SO(2)},
$$
where the residual $\Z/2$-action remaining from the original $O(2)$ action on each side above recovers the grading via the $\pm 1$-eigenspaces. We have shown there is a map $0|2\EFT_{\rm pol}^\bullet(X\times Y)\to 0|2\EFT_{\rm pol}^\bullet(Y)$, and it remains to analyze the grading shift. Since $\exp(-\mathcal{S})$ is $O(2)$-invariant, we consider the situation for integrable functions invariant under $\R^{0|2}\rtimes SO(2)$ and in the $\pm 1$ eigenspace of the $\Z/2$-action. Again, this is a local computation and by Fubini's Theorem we can restrict to the case that $X=Y=\R$. 

Using the notation from the proof of the previous proposition, we have $p_! \omega= c\cdot \eta_{12}$ for $c\in\R$, so the change in grading is precisely the grade of $f_{12}(x)P_{12}(d_2d_1x)d_2xd_1x$. Since $f_{12}(x)$ is always even and $d_1xd_2x$ is odd, we may assume that $P$ is homogeneous even or odd. If $P(d_2d_1x)$ is odd, the coefficient $c$ will be zero (e.g., by Wick's Lemma since the super degree coincides with the polynomial degree mod 2 and Equation \ref{eq:int}). If $P$ is even, then the degree is lowered by $1={\rm dim}(\R)$, as required.
\end{proof}

Lastly, we fix the normalization constant $N=(2\pi)^{n/2}$, which completes the construction of the quantization. 

\section{The Chern-Gauss-Bonnet Theorem}\label{sec:CGB}

The main ideas in our proof of the Chern-Gauss-Bonnet theorem were already presented in Section 1. Here we address the technical issues in the integration, 
$$
Z_X(g,\lambda h) =\int_{\underline{\sf SM}(\R^{0|2},X)} \exp(-\mathcal{S}_{\lambda h}(\Phi))\frac{\mathcal{D}\Phi}{N}.
$$

\subsection{Evaluating the integrals}\label{sec:int}

First we compute the integral with $\lambda=0$, and then compute it in the limit $\lambda\to\infty$, proving Theorems \ref{thm1} and \ref{thm2} respectively.

\begin{proof}[Proof of Theorem \ref{thm1}] By construction of the trivializing section of the Berezinian line in  Section \ref{sec:geopush}, integration first projects onto a particular subspace of functions on $\SM(\R^{0|2},X)$. We claim this projects to the correct component of $e^{-R(\phi_1,\phi_2,\phi_1,\phi_2)/2}$ to obtain the Pfaffian. Explicitly, we compute in coordinates from Section \ref{sec:action}
$$
d_2d_1g_{ij}dx^idx^j=R_{klij}d_1x^kd_2x^ld_1x^id_2x^j
$$
so if $\int (-) \mathcal{D}\Phi$ denotes the projection to the component of $d_1x^1d_2x^2\cdots d_1x^nd_2x^n$, 
$$
\int e^{-R(\phi_1,\phi_2,\phi_1,\phi_2)/2} \mathcal{D}\Phi= \frac{(-1)^{n/2}}{(n/2)!2^{n/2}} \sum \epsilon_{i_1 \cdots i_n} R_{i_1i_2i_1i_2}\cdots R_{i_{n-1}i_ni_{n-1}i_n}=\Pf(R).
$$ 
Next, using Lemma \ref{geo}, we identify $\Phi=(x,\phi_1,\phi_2,F)$ and first integrate out the $F$ variable. This is an ordinary Gaussian integral
$$
Z_X(g,0)=\int_{\SM(\R^{0|2},X)} \exp(-\mathcal{S}_0)\frac{\mathcal{D}\Phi}{N}=\frac{1}{N}\int_X \Pf(R) \int_{TX/X} e^{-F^2/2}dF=\int_X\Pf(R).
$$
So, recalling that $N=(2\pi)^{n/2}$, we get one side of the Chern-Gauss-Bonnet formula. \end{proof}

\begin{proof}[Proof of Theorem \ref{thm2}] It remains to compute the integral in the $\lambda\to \infty$ limit. First we integrate out the $F$-variable 
\beq
\int_{\SM(\R^{0|2},X)} \exp(-\mathcal{S}_{\lambda h})\frac{\mathcal{D}\Phi}{N}&=&\frac{1}{N}\int_{\pi TX\oplus \pi TX}\Bigg( \exp\left(-R(\phi_1,\phi_2,\phi_1,\phi_2)/2+\lambda\Hess(h)(\phi_1,\phi_2)\right)\nonumber \\
&&\cdot \int_{\stackrel{\SM(\R^{0|2},X)/}{\pi TX\oplus \pi TX}} \exp\left(-\langle F,F\rangle/2-\lambda \langle \nabla h,F\rangle\right)\Bigg)\nonumber
\eeq
so we compute the Gaussian integral
$$
\int_{\R^n} e^{-\langle F,F\rangle/2-\lambda \langle \nabla h,F\rangle}dF=(2\pi)^{n/2}e^{-\frac{\lambda^2}{2}\|\nabla h\|^2}.
$$
Now we employ an argument similar to that of Mathai-Quillen \cite{mathai-quillen}. First, we assume that $h$ is Morse, and choose small disjoint open neighborhoods $U_p$ of the critical points of~$h$. Let $X^c:=X-\bigcup U_p$. Then
$$
\int_{\SM(\R^{0|2},X)} \exp(-\mathcal{S}_{\lambda h})\frac{\mathcal{D}\Phi}{N}=\int_{\SM(\R^{0|2},X^c)} \exp(-\mathcal{S}_{\lambda h})\frac{\mathcal{D}\Phi}{N}+\int_{\SM(\R^{0|2},\bigcup U_p)} \exp(-\mathcal{S}_{\lambda h})\frac{\mathcal{D}\Phi}{N}
$$
We know that $\|\nabla h\|^2>0$ has a lower bound on $X^c$, so as $\lambda\to \infty$ we find
\beq
\int_{\SM(\R^{0|2},X^c)} \exp(-\mathcal{S}_{\lambda h})\mathcal{D}\Phi&=&\int_{\pi TX^c\oplus \pi TX^c} \exp\Bigg(-\frac{\lambda^2}{2}||\nabla h||^2\nonumber \\
&& \phantom{hello} +\lambda\Hess(h)(\phi_1,\phi_2)-R(\phi_1,\phi_2,\phi_1,\phi_2)\Bigg) \to 0.\nonumber
\eeq
It remains to evaluate the integral near the critical points of $h$. Focusing our attention on one such point $p$ (and possibly shrinking $U_p$), we choose coordinates on $U_p$ and via a concordance deform the metric to the standard one on $\R^n$. By Theorem \ref{thm3}, this concordance does not affect the value of the integral. The standard metric is flat, so we obtain
$$
\int_{\SM(\R^{0|2},U_p)} \exp(-\mathcal{S})\mathcal{D}\Phi=\int_{\SM(\R^{0|2},U_p)}\exp\left(-\frac{\lambda^2}{2}\|\nabla h\|^2+\lambda\Hess(h)(\phi_1,\phi_2)\right)
$$
The remainder of the computation amounts to a pair of Gaussian integrals. The Berezinian integration is a fermionic Gaussian integral (see Appendix \ref{berint}) with respect to the pairing~$\Hess(h)$, and we find
$$
\int_{\pi TU_p\oplus \pi TU_p} \exp(\lambda \Hess(h)(\phi_1,\phi_2))\mathcal{D}\Phi={\lambda^n}{\rm Det}(\Hess(h)),$$
where the right hand side is understood to be a differential form on $U_p$. Hence,
$$
\int_{\SM(\R^{0|2},U_p)} \exp(-\mathcal{S})\mathcal{D}\Phi=\lambda^n \int_{U_p} \exp\left(-\frac{\lambda^2}{2}||\nabla h||^2\right){\rm Det}(\Hess(h)).
$$
There are coordinates $\{x^i\}$ where the vector field $\nabla h$ on $U_p$ can be represented by a matrix~$H_p$, where we get a vector field on $\R^n$ by $x\mapsto H_px$. Note that in these coordinates~$\Hess(h)=H_p$ is  symmetric and nondegenerate. We can repackage the above as 
$$
\int_{\SM(\R^{0|2},U_p)} \exp(-\mathcal{S})\mathcal{D}\Phi= \lambda^n {\rm sgn}({\rm Det} H_p) \int_{U_p} \exp\left(-\frac{\lambda^2}{2}\|H_px\|^2\right)|{\rm Det}(H_p)|dx^1\wedge\dots \wedge dx^n.
$$
This is an (ordinary) Gaussian integral, and in the limit $\lambda\to \infty$ the value of the integral on $U_p$ approaches the value of the integral on $\R^n$, and we obtain
$$
\lim_{\lambda\to \infty}\int_{\SM(\R^{0|2},U_p)} \exp(-\mathcal{S})\frac{\mathcal{D}\Phi}{N}= {\rm sgn}({\rm Det}\Hess(h)).
$$
Summing over critical points
$$
\lim_{\lambda\to\infty}Z_X(g,\lambda h)=\sum_{p\in {\rm zero}(dh)} {\rm sgn}({\rm Det} \Hess(h))= {\rm Index}(\nabla h)=\chi(X),
$$
proving the result. 
\end{proof}

\appendix
\section{Supermanifold miscellany} \label{berint} \label{appensuper}

\subsection{Berezinian integrals by example}
For a throughout treatment of (relative) Berezinian integrals, see the article by Deligne and Morgan \cite{strings1}. Here we give a few examples. 

\begin{ex}[Relative integration] The important case to consider is relative integration for the trivial bundle $\R^{n|m}\times S\to S$. For any other family $M\to S$, the relative integration is locally of this form. On this bundle, the relative Berezinian is an $C^\infty_{\R^{n|m}}$-module of rank $1|0$ if $m$ is even, and rank $0|1$ if $m$ is odd. A choice of coordinates $\theta_1,\dots,\theta_n$ induces a trivialization of this module, and we denote the trivializing section by $[d\theta_1\cdots d\theta_n]$. If we tensor the relative Berezinian with the relative orientation bundle, we get a map
$$
\int_{\R^{n|m}\times S/S}\colon C^\infty(\R^{n|m}\times S)\to C^\infty(S). 
$$
Consider first the case where $n=0$. To Evaluate on a function, we Taylor expand in $\theta_i$ and project to the component of $\theta_1\cdots\theta_n$, obtaining a function on $S$. When $n\ne 0$, first we project, obtaining a function on $C^\infty(\R^n\otimes S)$. Next we use an orientation form on $\R^n$ to integrate and obtain a function on $C^\infty S$. 
\end{ex}

\begin{ex}[Fermionic Gaussians] The following is standard and can be found, for example, in \cite{susyequivderham}. 
Let $q$ be a quadratic form on a purely odd supervector space $V$ of even dimension. Note that for vectors $\omega,\eta\in V$, super quadratic means
$$
q(\omega,\eta)=-q(\eta,\omega). 
$$
Thinking of functions on $V$ as being an exterior algebra, we see $q$ is in the 2nd antisymmetric power, so in particular, $q\in C^\infty(V)$. We claim
$$
\int_V\exp\left(-\frac{1}{2}\tilde{q}\right)={\rm Pf}(q)={\rm Det}(q)^{1/2}.
$$
To see this, choose coordinates $C^\infty(V)\cong \R[\theta_1,\dots,\theta_{2n}]$ such that $q$ is a skew matrix of the form
\beq
q=\left(\begin{array}{cccccccccccccccc} 
\lambda_1 J & 0 & \dots & 0 \\
0 & \lambda_2 J & \dots & 0 \\
\vdots & \vdots & \ddots & \vdots \\
0 & 0 & \dots & \lambda_n J
\end{array}\right),\quad J=\left(\begin{array}{cc} 0 & -1 \\ 1 & 0\end{array}\right).
\eeq
The Berezinian integral projects to the top component of $\exp(-q/2)$, which is 
$$
(-2)^n\frac{1}{n!} q^n=\lambda_1\cdots\lambda_n\theta_1\cdots \theta_{2n}.
$$
Thus, the value of the integral is the product of the $\lambda_i$, which is precisely the Pfaffian of $q$. To compare, the determinant of $q$ is 
$$
{\rm Det}(q)=\lambda_1^2\cdots \lambda_n^2,
$$
so in particular, the integral singles out a preferred square root of ${\rm Det}(q)$. This verifies the claim. 

\begin{rmk}
Compare the above with the usual Gaussian integral,
$$
(2\pi)^{m/2}\int_W \exp\left(-\frac{1}{2}\tilde{q}\right)=\frac{1}{{\rm Det}(q)^{1/2}}.
$$
for $W$ an even (i.e., bosonic) $m$-dimensional vector space.
\end{rmk}

\end{ex}

\subsection{Functions on (generalized) supermanifolds}\label{functions}

One can identify the functor $C^\infty$ that takes a supermanifold $M$ to its superalgebra of functions with the functor $M\mapsto {\sf SM}(M,\R^{1|1})$ where addition and multiplication on the image are defined using addition and (the commutative) multiplication on $\R^{1|1}$. The grading on this algebra comes the the involution $\alpha$ of $\R^{1|1}$ determined by
$$
\alpha^*\colon C^\infty(\R)[\theta]\to C^\infty(\R)[\theta], \quad \theta\mapsto -\theta
$$
where we have identified $C^\infty(\R^{1|1})\cong C^\infty(\R)[\theta]$. Using the Yoneda Lemma, the morphisms~${\sf SM}(M,\R^{1|1})$ are determined by natural transformations between the functors $\underline{M}$ and $\underline{\R}^{1|1}$. Such a natural transformation is a map of sets
$$
{\sf SM}(S,M)\to {\sf SM}(S,\R^{1|1})\cong C^\infty(S).
$$
Hence, maps of sets $\underline{M}(S) \to C^\infty(S)$ natural in $S$ are in bijection with functions on $M$. This makes sense for $M$ a generalized supermanifold. Being a functor valued in algebras, generalized supermanifolds have an algebra of functions. In fact, since objects in the category of generalized super manifolds can be written as a coequalizer of supermanifolds and equalizers of nuclear vector spaces are nuclear, we obtain a nuclear super vector space of functions on a generalized supermanifold. This fact was explained to me by Dmitri Pavlov.

\bibliographystyle{amsalpha}
\bibliography{references}
\end{document}